\newtheorem{theorem}{Theorem}[section]
\newtheorem{lemma}[theorem]{Lemma}
\newtheorem{proposition}[theorem]{Proposition}
\newtheorem{corollary}[theorem]{Corollary}
\theoremstyle{definition}
\newtheorem{remark}[theorem]{Remark}
\newtheorem{definition}[theorem]{Definition}
\def\beq{\begin{eqnarray*}}
\def\eeq{\end{eqnarray*}}
\def\Q{\mathbb{Q}}
\def\R{\mathbb{R}}
\def\Z{\mathbb{Z}}
\def\In{\mathbb{I}}
\def\La{\widetilde{L}}
\def\incl{\hookrightarrow}
\def\to{\rightarrow}
\def\eps{\varepsilon}
\def\x{\times}
\def\d{\partial}
\def\phi{\varphi}
\def\ttot{\widetilde{\mathrm{Tot}}}
\def\Emb{\mathrm{Emb}}
\def\P{\mathcal{P}}
\def\Map{\mathrm{Map}}
\def\fib{\mathrm{fib}}
\def\tfib{\mathrm{tfib}}
\def\holim{\mathrm{holim}}
\def\C{\mathcal{C}}
\def\la{\langle}
\def\ra{\rangle}
\def\AM{\widetilde{AM}}
\def\tC{\widetilde{C}}
\def\co{\colon\thinspace}
\def\G{\mathcal{G}}
    \title{Embedding calculus knot invariants  are of finite type}
    \author{Ryan Budney}
    \email{ryan.budney@gmail.com}
        \address{Department of Mathematics, University of Victoria, Victoria, BC, Canada}
    \author{James Conant}
    \email{jim.conant@gmail.com}
    \address{Department of Mathematics, University of Tennessee, Knoxville, TN, USA}
    \author{Robin Koytcheff}
    \email{koytcheff@math.umass.edu}
        \address{Department of Mathematics and Statistics, University of Massachusetts-Amherst, Amherst, MA}
    \author{Dev Sinha}
    \email{dps@uoregon.edu}
        \address{Department of Mathematics, University of Oregon, Eugene, OR, USA}
    \keywords{}
    \thanks{}
\begin{document}

\maketitle

\begin{abstract}
We show that the map on components from the space of classical long knots to the $n$th stage of its Goodwillie--Weiss embedding calculus tower is a map of monoids whose target is an abelian group, and which is invariant under clasper surgery.  We deduce that this map on components is a finite type-$(n-1)$ knot invariant.  We  compute the $E^2$-page in total degree zero for the spectral sequence converging to the components of this tower, identifying it as  $\Z$-modules of primitive chord diagrams, providing evidence for the conjecture that the tower is a universal finite-type invariant over the integers.  Key to these results is the development of a group structure on the tower compatible with connect-sum of knots, 
which in contrast with the corresponding results for the (weaker) homology tower requires novel techniques involving operad actions, evaluation maps,  and cosimplicial and subcubical diagrams.  
\end{abstract}

\section{Introduction} 

In this paper we connect three current threads in studying knots and the moduli space of all knots:  
 the Goodwillie--Weiss embedding calculus, Budney's operad actions, and Vassiliev's theory of finite-type invariants.  
 This work also connects two fundamental results on commutativity which are over fifty years old.
In 1949 H. Schubert \cite{Schubert} established  and applied the fact that connect-sum of knots is commutative. 
 In 1947 Steenrod \cite{Steenrod} gave explicit formulae exhibiting commutativity of cup product, in the course of 
 defining the cohomology operations which bear his name.  We establish and use compatibility of these classical 
 results as we develop a group structure on the components of the Goodwillie--Weiss tower.

One of our main results is that the Goodwillie--Weiss tower for  knots yields additive invariants of finite type.  
Such a result is modest compared to the  conjecture made in \cite{BCSS}  that the tower gives all such invariants. 
We provide  evidence for this conjecture through calculation of  the $E^2$-term of the spectral sequence for the 
components of this tower.
The current results are meaningful first steps, and establishing them requires  new  
combinations of tools in the areas of operad actions, clasper surgery, cosimplicial and cubical diagrams, and evaluation maps.

A large part of the work is ``getting off the ground,'' showing that the Goodwillie--Weiss tower yields 
abelian group-valued invariants.  Because the knot invariants are defined as an induced map on $\pi_0$, 
such invariants are a priori only set valued.    Once we establish the group structure, we use the 
Habiro surgery criterion for finite type to establish the main result that the $n$th stage in the tower defines
a type-$(n-1)$ invariant.

Our work is a natural sequel to two previous papers.  The first is \cite{BCSS}, in which we establish 
that the third stage of the tower is a universal type-two invariant.  Here the group structure and the 
finite-type result were straightforward from ad hoc arguments.  The more interesting aspect of this work 
was the development of geometry to explicitly distinguish components in the tower, yielding a new  
interpretation of the type two knot invariant by counting collinearities of points.  This geometric approach was continued in \cite{Flowers}.

The second predecessor  is Volic's thesis \cite{VolicFT}, which establishes that the 
Goodwillie--Weiss tower for the (space level) rational homology of knots is a universal rational finite-type invariant.  
The map to the homology tower factors through the tower we consider (the ``homotopy tower''),
which thus encodes such a universal rational invariant as well.
The  question of whether the tower gives a universal invariant
over the integers is of considerable interest since Vassiliev invariants with values in arbitrary $\mathbb Z$-modules
are not well understood. 
A universal invariant is known to exist over the integers (see for example Habiro's $\psi_n$ map, used Theorem~6.1 of this  paper), 
but it is not computable nor does it directly relate to the combinatorics of weight systems.
Identification of the tower as a universal invariant would likely resolve questions about the
combinatorics and geometry of finite-type invariants, and in particular the open question of whether weight systems ``integrate.''

Our techniques are  distinct from those of Volic in three ways.   
In our result the type-$n$ invariants are potentially realized at the $(n+1)$st stage, 
while in the rational homology tower they are realized precisely  at stage $2n$.   
Volic uses the Vassiliev derivative approach to establish his finite-type result, while we use Habiro's clasper surgery.
Finally, Volic's result proceeds by extending the theory of Bott-Taubes integration to the homology tower, 
while our approach through the homotopy tower 
invites new techniques from  geometric and algebraic topology.

In the study of multiplicative structures on  the Goodwillie--Weiss tower, there are further predecessors to our
work, namely results of Sinha  \cite{SinhaOperads} as well as of Turchin \cite{VictorDelooping} and 
Dwyer-Hess \cite{DwyerHess} on deloopings of  multiplicative structures on the tower.  Indeed, by  Turchin's result one
can deduce that the components of stages of the tower for classical framed knots is an abelian group. 
(Dwyer-Hess's result applies to the limit.  Sinha's result only implies an abelian monoid structure,
and also only applies to the limit.)  
But our application requires compatibility with 
connect sum of knots, which we could not establish using these models.  Thus there are now four different monoid structures on the components 
of the tower for framed classical knots, three of which are group structures, which are all conjecturally equivalent. 

We extend the abelian group structures to the spectral sequence level, 
which is crucial for analysis since studying components of an arbitrary tower of spaces can potentially lead to unending ad-hoc calculations.  
To do so we employ $C_1$-operad actions, which necessitates their development throughout our work
despite the fact that our main  theorems are at the level of components.
With group structure in hand, we immediately establish convergence at each finite stage and then use results of the second author \cite{Conant}
to identify the $E^2$ term in degree zero as the $\Z$-modules of primitive chord diagrams.  

Based on this, we conjecture that the map from the knot space to the tower sends linear combinations of knots given by 
resolving a singular knot to the corresponding elements of $E^2$.  We also conjecture that the spectral sequence collapses, 
and together these two conjectures would imply that weight systems over the integers all integrate to finite-type invariants.  
Unlike Vassiliev's original spectral sequence, this spectral sequence does not involve a subtle limit process (see \cite{Giusti}) 
but instead is simply the spectral sequence of a tower of fibrations.  It thus is more amenable to  tools from algebraic topology.

\bigskip

The paper is organized as follows.  Section \hyperref[Background]{\ref*{Background}} gives needed general background on compactified configuration spaces and 
on cubical and cosimplicial diagrams.   Section \ref{TheModels} recalls the resulting mapping space and cosimplicial models we prefer to use, 
interchangeably, for the $n$-th stage of the Taylor Tower for $\Emb^{fr}(\R, \R^3)$.  Some readers may  prefer to only look back at these sections 
as needed, in particular for the ``infinitesimal transformation'' map between the spatial and operadic mapping space models. 

In Section \ref{LittleIntervals}, we construct homotopy-commutative multiplications on the $n$-th stage of the Taylor tower for $\Emb^{fr}(\R, \R^3)$.    This shows that $\pi_0$ of each stage of the Taylor tower is an abelian monoid. Section \ref{PropertiesOfProjMaps} contains two main results, the first being that the projections in the tower are surjective on $\pi_0$.  This is then used to show that $\pi_0$ of each stage is a group.


In Section \ref{FiniteType}, we show that $\pi_0$ of the map from the space of knots to its $(n+1)$st Goodwillie--Weiss approximation is invariant under
clasper surgery and thus of type $n$.  In Section \ref{SpectralSequence}, we show that  the homotopy spectral sequence for the tower is a spectral sequence of abelian groups (in particular in total degrees zero and one), and we identify the $E^2$ term.

\subsection{Acknowledgments}
The authors thank the referee for a careful reading of the paper and useful comments.  The first author acknowledges support from NSERC and PIMS.  The third author was supported partially by NSF grant DMS-1004610 and partially by a PIMS postdoctoral fellowship.  He thanks Tom Goodwillie for useful conversations.

\tableofcontents

\section{Compactification of configuration spaces}\label{Background}
\label{SimplicialCptfn}
%

\subsection{Basic definition}
We briefly review the simplicial compactification of configuration space, defined in \cite{SinhaTop, SinhaCptn}.
For any manifold $M$ -- not necessarily compact, and possibly with boundary -- let $C_n(M)$ denote the configuration space of $n$ points in $M$.  It is the space of distinct ordered $n$-tuples of points in $M$.  

\begin{definition}
For an $M$ embedded in some Euclidean space $\R^d$, let $C_n\la M \ra$ denote the closure of the image of 
\begin{align*}
C_n(M) & \incl M^n \x (S^{d-1})^{n \choose 2}  \\
(x_1,...,x_n) & \mapsto (x_1,...,x_n), \left( \frac{x_i - x_j}{|x_i - x_j|} \right)_{i<j}
\end{align*}
\end{definition}

As shown in \cite{SinhaCptn}, this is independent of embedding and is a quotient of $C_n[M]$, 
the Fulton--MacPherson compactification of $C_n(M)$.  
If $M$ is noncompact, then $C_n\la M \ra$ as defined above as well as the Fulton-MacPherson compactification are not compact but are more accurately described as completions.  Informally, we refer to  the projection to $M^n$ as the ``spatial'' information, while information which distinguishes points with
the same projection to $M$ is ``infinitesimal.''
If $M$ is one-dimensional and connected, then $C_n\la M\ra$ is in general not connected.  For such $M$, by abuse we use $C_n\la M\ra$ to denote the connected component where the $n$ points are in (cyclic) order.


\subsection{Framings and tangent data}
We need framed configurations. For any manifold $M$, define $C_n^{fr}\la M\ra$ as the pullback
\[
\xymatrix{
C_n^{fr}\la M \ra \ar[r] \ar[d] & (Fr M)^n \ar[d] \\
C_n \la M \ra \ar[r] & M^n,
}
\]
where $Fr M \to M$ is the unit frame bundle of the tangent bundle of $M$.  If $M$ is parallelizable and $d$-dimensional, then $C_n^{fr}\la M \ra$ is homeomorphic to $C_n \la M\ra \x O(d)^n$.  

Let $C_n'\la M \ra$ be defined similarly, with the unit tangent bundle $STM$ in place of the unit frame bundle.



\subsection{Distinguished boundary points}
Suppose $M$ has two distinguished points $y_0,y_1$ in its boundary.  Then, as in \cite{BCSS} and \cite{SinhaTop}, let $C_n\la M, \d\ra$ denote the subspace of $C_{n+2}\la M\ra$ where the first and last points are located at $y_0$ and $y_1$, by abuse omitting dependence on these points from notation.  

Further, if there are distinguished tangent vectors $v_0 \in TM|y_0$ and $v_1 \in TM|y_1$, let $C_n'\la M, \d\ra$ be the subspace of $C_{n+2}'\la M \ra$ where $(p_1, v_1)$ and $(p_{n+2}, v_{n+2})$ are $(y_0, v_0)$ and $(y_1,v_1)$ respectively. 
By fixing framings at $y_0$ and $y_1$, define  $C_n^{fr}\la M, \d\ra$ similarly.

Define $C_n\la \In, \d\ra$ by taking the two endpoints to be the distinguished points.  
The fact that $C_n\la \In,\d\ra$ is 
the $n$-simplex is the main rationale for calling this compatification ``simplicial.''
Define $C_n' \la \In^d\ra$ by taking $\{y_0, y_1\}$ to be $\d \In \x \{(0,...,0)\}$ and $v_0 = v_1 = (1,0,...,0)$ (so our knots will ``proceed from left to right''), 
and similarly define $C_n^{fr} \la \In^d, \d\ra$ by using the identity element in $O(d)$ for framings at those boundary points 
(using the standard parallelization of $\In^d \subset \R^d$).

\subsection{Quotients by translation and scaling, and insertion maps}
\label{InsertionMaps}
There are maps between products of  $C_n \la \R^d\ra$ and  $C_n \la \In^d, \d \ra$ defined by ``inserting an infinitesimal configuration into a point of another configuration.''  
Let $\widetilde{C_n}(\R^d):=C_n(\R^d) / (\R^d \rtimes \R_+)$ be the quotient of configuration space by translations and positive scalings of all $n$ points.  
\begin{definition} \label{ConfigDef}
Define
$\widetilde{C_n}\la \R^d \ra$ as the closure of the image of the map 
\begin{align*}
\widetilde{C}_n(\R^d) &\overset{e}{\to} (S^{d-1})^{{n \choose 2}} \\
(x_1,...,x_n) &\mapsto \prod_{i < j} \frac{x_i - x_j}{|x_i - x_j|},
\end{align*}
which is injective except on collinear configurations.  

We let $v_{ij}$ denote the projection of $\widetilde{C_n}\la \R^d \ra$ to the $i,j$th factor of $S^{d-1}$.
\end{definition}

The similarly defined  $\widetilde{C}_n \la \In^d \ra$ is homeomorphic to $\tC_n \la \R^d \ra$, so both $C_n \la \R^d \ra$ and $C_n \la \In^d \ra$ naturally surject onto $\widetilde{C_n}\la \R^d \ra$.

We proceed directly to the framed setting in defining insertion maps.
Let $\widetilde{C}_n^{fr}\la \R^d \ra := \widetilde{C_n}\la \R^d\ra \x (O(d))^n$, the framed version of the ``infinitesimal configuration space."  

For every $m,n$, and $i \in \{1,...,n\}$, we define a map $\circ_i$ which,  informally, inserts a configuration of $m$ points 
(with framings) into the $i^\mathrm{th}$ point of a configuration of $n$ points (with framings).  
In the resulting configuration of $m+n-1$ points, the $m$ points form an 
``infinitesimal configuration."  Precisely, in coordinates we define 
\[
\circ_i: {C}^{fr}_n\la \In^d, \d \ra \x {C}^{fr}_m\la \In^d \ra \to {C}^{fr}_{m+n-1}\la \In^d, \d \ra
\]
as follows.  First, suppressing the dependence on $i$, we let
\begin{align*}
\widehat{j} = 
\left\{
\begin{array}{ll}
j & \mbox{if  $j\leq i$} \\
i & \mbox{if $i \leq j \leq i+m-1$} \\
{j+1-m} & \mbox{if $j > i+m-1 $.} 
\end{array}
\right. \\
\end{align*}
Now define $\circ_i$ as sending 
\[
((x_j)_{j=1}^n, (u_{jk})_{j<k}, (\alpha_j)_{j=1}^n,) \x ((y_j)_{j=1}^m, (v_{jk})_{j<k}, (\beta_j)_{j=1}^m) \mapsto ((z_j)_{j=1}^{m+n-1}, (w_{jk})_{j<k}, (\gamma_j)_{j=1}^{m+n-1}),
\] 
where $z_j = x_{\hat{j}}$, 

\begin{align*}
w_{jk} = 
\left\{
\begin{array}{ll}
\alpha_i v_{(j-i+1)(k-i+1)} & \mbox{if $i \leq j \text{ and } k \leq i+m-1$} \\
u_{\hat{j}\hat{k}} & \mbox{otherwise},  \\
\end{array}
\right. \\
\end{align*}
and 
\begin{align*}
\gamma_j =
\left\{
\begin{array}{ll}
\alpha_i \beta_{j-i+1} & \mbox{if $i \leq j \leq i+m-1$} \\
\alpha_{\hat{j}} &  \mbox{otherwise}. 
\end{array}
\right. \\
\end{align*}

One must check that such maps send  subspaces of $(\R^d)^n \x (S^{d-1})^{n \choose 2}$ 
to each other appropriately.  Here we only cite a similar check, namely
\cite[Proposition 6.6]{SinhaCptn}, using the description  of the $C_n\la \R^d \ra$ as a subspace of 
 $(\R^d)^n \x (S^{d-1})^{n \choose 2}$ given in \cite[Theorem 5.14]{SinhaCptn}.
The results are not dependent on the $y_j$ coordinates of ${C}^{fr}_m\la \In^d \ra$, which means that these
insertion maps factor through the projection to $\widetilde{C}^{fr}_m\la \R^d \ra$ on that factor.

\section{The models} 
\label{TheModels}

We study the space of framed knots $\Emb^{fr}(\R, \R^3)$.  A framed knot is a 
 smooth embedding of $\R$ into $\R^3$ together with a smooth map $\R \to O(3)$
whose first (column) vector is  the unit derivative map.  
Embeddings take $\In =[-1,1] \subset \R$ into $\In^3 \subset \R^3$, and on $\R \setminus \In$ are 
standard, given by $t \mapsto (t,0,0)$.  The framing is required to be constant at the identity on $\R \setminus \In$.

We primarily use a mapping space model
for the $n$-th stage of the Goodwillie--Weiss tower for the 
space of framed knots $\Emb^{fr}(\R, \R^d)$.  But for both spectral sequence calculations and for clarity through comparison,
we use a cosimplicial model as well.  For each of these, there are a few variants depending on the choice of compatifications
of  configuration spaces $C_n(M)$.  The  Fulton--MacPherson (Axelrod--Singer) compactification $C_n[M]$
 is a smooth manifold with corners.  
In this paper, we use instead the simplicial compactification $C_n\la M \ra$, developed in the previous section.  
It is not a manifold with corners, but it has the advantage that one component of
$C_n \la \In \ra$ is the $n$-simplex.   This is needed to define a cosimplicial model, and the 
corresponding mapping space model is defined in terms of the face poset of the simplex rather
than that of the associahedron.

\subsection{Cosimplicial models}

A cosimplicial space is a functor to $\mathcal{T}op$ from the category $\Delta$ with one object for 
each ordered set $[n]=(0, \ldots, n)$ and morphisms given by order-preserving maps.  
We rely on the standard set of generating morphisms, denoting coface maps by $d^i$ and codegeneracy maps by $s^i$.
We let $\Delta_n$ be the full sub-category of $\Delta$ containing the first $n+1$ objects.

Goodwillie-Weiss embedding calculus leads to forming a cosimplicial space from 
the spaces $C_\bullet^{fr} \la \In^d, \d \ra$, 
by using the first vector in the framing as the ``doubling direction.''
This was first done in the unframed setting in \cite[Corollary 4.22]{SinhaTop}.
In more detail, we have the following.

\begin{definition}\label{CosimplicialModelSpatial}
 The spatial cosimplicial model  ${C}^{fr}_\bullet \la \In^d, \d \ra$ has $n^\mathrm{th}$ entry ${C}^{fr}_n \la \In^d, \d \ra$.  
 
 The codegeneracy $s_i: {C}_n^{fr} \la \In^d, \d \ra \to {C}_{n-1}^{fr} \la \In^d, \d \ra$ is the extension to 
 compactifications  of the projection which forgets the $i^\mathrm{th}$ point.  
 
 The coface $d^i$ is given by $d^i(x)= x\circ_i \mu$, which ``doubles'' the $i^\mathrm{th}$ 
 point by inserting into its position the infinitesimal two-point configuration $\mu$ rotated by the $i$-th framing.  
 \end{definition}
 
 So here $d^0$ and $d^{n+1}$ ``double'' the ``extra'' points which are located in the middle of the left and right faces of $\In^d$.
 
 \begin{definition}\label{CosimplicialModelOperadic}
 The operadic cosimplicial model $\widetilde{C}^{fr}_\bullet \la \R^d\ra$ has $n^\mathrm{th}$ entry $\widetilde{C}^{fr}_n \la \R^d\ra$.  
 
 The codegeneracy $s_i: \widetilde{C}_n^{fr}  \la \R^d \ra \to \widetilde{C}_{n-1}^{fr} \la \R^d \ra$ is the extension to compactifications
 modulo translation and 
 scaling of the projection which forgets the $i^\mathrm{th}$ point.  
 
 For $1\leq i \leq n$, the coface $d^i$ is given by $d^i(x)=x\circ_i \mu$, which ``doubles'' the $i^\mathrm{th}$ 
 point by inserting into its position the infinitesimal two-point configuration $\mu$ rotated by the $i$-th framing.  
 The coface $d^0(x)=\mu \circ_2 x$, while the coface  $d^{n+1}(x) = \mu \circ_1 x$.
 \end{definition}

Thus in the operadic model,  the first and last coface maps insert configurations into two-point configuration, 
which has the effect of ``adding a point at infinity.''  Along with the obvious passing to quotients by translation and 
scaling, this is the difference
between the spatial and operadic models.  We discuss the relative advantages of these models as well
as why we need both when we discuss their associated mapping space models below.  The operadic model
is called such because it fits into a framework by which operads with multiplication produce cosimplicial
objects, following 
Gerstenhaber--Voronov and McClure--Smith \cite{MS}.

 The fourth author showed \cite{SinhaTop, SinhaOperads} 
 that the homotopy-invariant totalizations of similar cosimplicial spaces for unframed knots, which we will use and 
 denote by ${C}'_\bullet \la \In^d, \d \ra$ and $\widetilde{C}'_\bullet \la \R^d\ra$, give models for the Goodwillie--Weiss tower.
 The framed version of this construction was  studied  by Salvatore in   \cite[Section 3]{Salvatore}.

 By needing to use a homotopy-invariant totalization to model the towers, some control of geometry and combinatorics is lost.  
 A standard approach to cosimplicial spaces through (sub)cubical diagrams, reviewed in
 the next section, retains 
 both combinatorics and geometry by sacrificing some symmetry.  In particular the resulting spatial model is 
 compatible with the evaluation map, also known as a Gauss map, from the knot space.

\subsection{Mapping space models}
\label{TheMappingSpaceModel}

Our mapping space models are defined as homotopy limits of subcubical diagrams of compactified configuration spaces.
A subcubical diagram is a functor from
$\P_\nu[n]$, the poset  of nonempty subsets of $[n]$, which is the face poset of the $n$-simplex.
Because the cosimplicial and subcubical diagram categories both involve ordered sets, there is an immediate relationship
between them.  In general $\Delta$ admits a canonical functor from any category defined through finite ordered sets, as follows.

\begin{definition}\label{GFunctor}
Let $C$ be a category whose objects are given by 
ordered finite sets and whose morphisms are subsets of the order-preserving maps between those sets.
Define $\mathcal{G}_C : C \to \Delta$ to be the functor which sends an ordered finite set $S$ to $[\# S -1]$ and which sends an order-preserving
map $S \to T$ to the composite 
$$[\#S -1] \cong S \to T \cong [\#T -1],$$
where the isomorphisms are order-preserving.

For  $C = \P_\nu[n]$, we abbreviate $\mathcal{G}_{\P_\nu[n]}$ to $\mathcal{G}_n: \P_\nu[n] \to {\Delta_n}$.
\end{definition}

The functor $\mathcal{G}_n$ was used \cite{SinhaTop} and \cite{MunsonVolicCubes} to use cubical diagrams to model cosimplicial spaces.
The resulting diagrams use all of the coface maps ``multiple times'', while the codegeneracy maps
are ignored.


\begin{definition}\label{AMmodels}

The spatial mapping space model $AM_n^{fr}$ is the homotopy limit of the composite functor 
$C_\bullet^{fr} \la \In^3, \d \ra \circ \mathcal{G}_n :\P_\nu[n] \to \mathcal{T}op$.  

The operadic mapping space model $\AM_n^{fr}$ is the homotopy limit of the composite functor 
$\widetilde{C}_\bullet^{fr} \la \R^3 \ra \circ \mathcal{G}_n :\P_\nu[n] \to \mathcal{T}op$.

\end{definition}

We are henceforth focussing on classical knots, in three dimensions, so we are suppressing the ambient dimension from notation.
The mapping space model $AM_n$ of \cite{BCSS} is defined similarly to $AM_n^{fr}$, but pulled back from ${C}'_\bullet \la \In^3, \d \ra$ 
and so with tangent vectors instead of frames. 

Using the definition of homotopy limit through nerves of under-categories, 
the homotopy limit of a subcubical digram is given by a collection
of maps from simplices.  Since the structure  maps $d^i$ are injective, 
an element $\phi$ of $AM^{fr}_n$ is determined by a map $\Delta^n \to {C}^{fr}_n \la \In^3, \d \ra$, 
so $AM^{fr}_n$ is  a subsspace of $\Map(\Delta^n, {C}^{fr}_n\la \In^3, \d \ra)$.  
Because the faces of the simplex map to configurations which are degenerate in an ``aligned'' manner we sometimes refer to this as the 
subspace of aligned maps.  Explicitly, if some (consecutive) collection of $t_i$ in $\vec{t}=(t_1,...,t_n) \in \Delta^n$ are equal, 
then the corresponding points in the configuration $\phi(\vec{t})$ have ``collided'' in $\In^3$, their framings ($\alpha_i \in O(3)$) are all equal, and 
the first column of $\alpha_i$ is the direction of collision of these points.

 To interrelate cosimplicial and mapping space 
 models, a main technical result is Theorem 6.7 in \cite{SinhaTop}, which establishes that  $\mathcal{G}_n$ is left cofinal.
So if  $X^\bullet$ is a cosimplicial space then the homotopy limit of the subcubical diagram $X^\bullet \circ \mathcal{G}_n$ is 
equivalent to the homotopy limit of the restriction of $X^\bullet$ to $\Delta_n$.   By work of Bousfield and Kan \cite{BK}, this is homotopy equivalent to 
$\ttot^n X^\bullet$, the $n$th  stage in the homotopy invariant totalization tower.   In \cite{SinhaTop} this is used to establish the validity
of the cosimplicial models, building from that of the mapping space models.


\subsubsection{Evaluation maps}

Our main results make use of evaluation maps, which naturally connect with the spatial mapping space model.
By functoriality for embeddings of compactified configuration spaces (Corollary 4.8 of \cite{SinhaCptn}), we have that an embedding
$f\co \In \to \In^3$
will extend to a map from $\Delta^n =C_n \la \In, \partial \ra$ to $C_n \la \In^3, \d \ra$.   For a framed embedding
with framing $\alpha \in \Map(\In, O(3))$ to go along with the embedding $f$, this map is given as follows.

\begin{definition}
Define
$ev_n: \Emb^{fr}(\R, \R^3)  \to  AM^{fr}_n \subset \Map(\Delta^n, C_n\la \In^3, \d \ra \x O(3)^n)$ as sending an embedding $f$ and framing $\alpha$
to the map which sends
$$ ((-1\leq t_1 \leq ... \leq t_n \leq 1) \mapsto \left( (f(t_1),...,f(t_n)), \alpha(t_1),..., \alpha(t_n) \right).$$

\end{definition}

Intuitively, $ev_n$ samples the knot and its framing at $n$ points in the domain. 


One of the main results of \cite{SinhaTop}, 
namely Theorem 5.4, immediately extends to the framed setting to establish 
that $ev_n$  agrees in the homotopy category with the canonical map from the embedding space 
$\Emb^{fr}(\R, \R^d)$ to the $n$-th stage of  the Taylor tower $T_n \Emb$.   

In dimensions $d>3$ it is known that the connectivity of $ev_n$ increases with $n$, and hence the tower 
converges to the embedding space.  
Thus, the homotopy theory of evaluation maps captures the nature of knots in these dimensions.  
For $d=3$, it is not known if $ev_n$ is even 0-connected (that is surjective) much less injective in the limit.
We show below that on components $ev_n$  defines abelian-group-valued invariants of finite type $(n-1)$.

\subsubsection{Translation between mapping space models}\label{OneIntervalAction}

In addition to the quotienting by translation and scaling, $AM_n$ and $\AM_n$ differ by the first and last coface maps, which 
in the former case ``add a point to the configuration on the left or right face of $\In^3$'' while in the latter case such points are added
``at infinity to the left or right.''  This means that the obvious
quotient maps do not define a map between these models.  We require a map between them, as the spatial model is the 
target of the evaluation map as we just saw and the operadic model is more convenient for defining multiplications below.

Observe that the evaluation of a long knot can be extended to include times greater than $1$ or 
less than $-1$, in which case the corresponding configuration points are standard along the $x$-axis.
We achieve something similar for arbitrary elements of $AM_n$ to obtain maps 
$\tilde{\varphi} : C_n \la \R \ra  \to C_n \la \R^3, \partial \ra$.  

\begin{definition}\label{Extension}
For $t_1 \leq t_2 \leq \cdots \leq t_n$ with all $t_i \in \R$, we let $\hat{t_i} = \begin{cases} -1 & t_i \leq -1 \\ t_i   & -1 \leq t_i \leq 1.\\  1 & t_i \geq 1 \end{cases}$ 

Then we define $\tilde{\varphi}(t_1 \leq t_2 \leq \cdots \leq t_n)$ to be the configuration which is the union of the following:
\begin{enumerate}
\item One point at $(t_i, 0, 0)$ for teach $t_i \leq -1$ or $\geq 1$.
\item The configuration obtained by taking $\varphi(\hat{t_1} \leq \cdots \leq \hat{t_n})$ and applying the projection which forgets points $x_i$ for which
 $t_i \leq -1$ or $\geq 1$.
 \end{enumerate}
 
 Moreover after we compose with the quotient $C_n \la \R^3, \partial \ra \to \widetilde{C_n} \la \R^3, \partial \ra$, 
 $\tilde{\varphi}$ extends to a map from $C_n \la [-\infty, \infty] \ra$ to $\widetilde{C_n} \la \R^3, \partial \ra$ since the limit as some $t_i$ goes to $-\infty$ or $\infty$ will have all vectors $v_{ij}$ approaching $(1,0,0)$ or $(-1,0,0)$, as all points in the resulting configurations not in $\In^3$ lie 
 along the $x$-axis.
 \end{definition}
 
 Let $h$ be an order-preserving homeomorphism between $[-1, 1]$ and $[-\infty, \infty]$.
 By our usual abuse of notation, let $h$ also denote the induced map on collections of points in these.
 
 \begin{definition}\label{Iota}
 Let $\iota : AM_n \to \AM_n$ be the map which sends $\varphi$ to the composite 
 $$\Delta^n \overset{h}{\to} C_n \la [-\infty, \infty] \ra \overset{\tilde{\varphi}}{\to}  \widetilde{C_n} \la \R^3, \partial \ra.$$
 \end{definition}

\begin{proposition}\label{IotaHomotopyEquivalence}
$\iota$ is a homotopy equivalence.
\end{proposition}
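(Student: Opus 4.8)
The plan is to exhibit $\iota$ as built from a chain of maps each of which is visibly a homotopy equivalence, rather than to construct an explicit homotopy inverse by hand. The map $\iota$ is a composite of three operations: reparametrizing the simplex $\Delta^n = C_n\la\In,\d\ra$ to the ``doubly infinite interval'' $C_n\la[-\infty,\infty]\ra$ by the order-preserving homeomorphism $h$; extending an aligned map $\varphi$ defined on the simplex to the map $\widetilde\varphi$ on $C_n\la[-\infty,\infty]\ra$ which lays the extra points out standardly along the $x$-axis; and postcomposing with the quotient $C_n\la\R^3,\d\ra \to \widetilde{C_n}\la\R^3,\d\ra$ by translation and scaling. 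The first operation is a homeomorphism on the source, so induces a homeomorphism of the associated mapping/homotopy-limit spaces. The crux is therefore to understand the second and third operations at the level of the subcubical homotopy limits.

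First I would identify both $AM_n$ and $\AM_n$ with homotopy limits over $\P_\nu[n]$ and reduce the claim to a levelwise statement. By Definition~\ref{AMmodels}, $AM_n = \holim_{\P_\nu[n]}\, C^{fr}_\bullet\la\In^3,\d\ra\circ\mathcal{G}_n$ and $\AM_n = \holim_{\P_\nu[n]}\, \widetilde{C}^{fr}_\bullet\la\R^3\ra\circ\mathcal{G}_n$. I claim $\iota$ is induced by a natural transformation of these two $\P_\nu[n]$-diagrams: on the subset $S\subseteq[n]$ with $\#S = k+1$, the component is a map $C^{fr}_k\la\In^3,\d\ra \to \widetilde{C}^{fr}_k\la\R^3\ra$ which ``forgets the spatial positions of the two endpoint marker points, remembers only their infinitesimal/framing data relative to the rest, and passes to the quotient by translation and scaling.'' Because the insertion maps $\circ_i$ defining the coface maps in the operadic model were shown (end of Section~\ref{InsertionMaps}) to factor through the projection to $\widetilde{C}^{fr}_m\la\R^d\ra$ on the inserted factor, this collection of levelwise maps is compatible with all the structure maps of the two diagrams, i.e.\ is a genuine natural transformation; then $\iota$ (up to the harmless reparametrization $h$) is the induced map on $\holim$. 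Since a natural transformation which is an objectwise weak equivalence induces a weak equivalence on homotopy limits, it suffices to prove that each levelwise map $C^{fr}_k\la\In^3,\d\ra \to \widetilde{C}^{fr}_k\la\R^3\ra$ is a homotopy equivalence.

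Next I would prove the levelwise statement. Write $C^{fr}_k\la\In^3,\d\ra \subseteq C^{fr}_{k+2}\la\In^3\ra$ with the first and last points pinned at the midpoints of the left and right faces of $\In^3$, carrying the identity frame. The target $\widetilde{C}^{fr}_k\la\R^3\ra = \widetilde{C_k}\la\R^3\ra\times O(3)^k$ is configuration space modulo translation and scaling, framed. On the locus of genuine (non-degenerate) configurations this is a standard fact: pinning two points and then quotienting by the $3+1$-dimensional group of translations and scalings are both contractible-fiber operations — the space of affine-scaling maps sending the two pinned positions to a fixed pair is a $\mathrm{GL}$-torsor that deformation retracts appropriately — and the endpoint direction vectors $v_{ij}$ for the pinned points are determined in the quotient. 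The content is that this extends over the boundary strata of the simplicial compactification, where points collide; here I would invoke the same machinery used throughout Section~\ref{Background}, namely the description of $C_n\la\R^d\ra$ as a subspace of $(\R^d)^n\times(S^{d-1})^{\binom n2}$ from \cite[Theorem 5.14]{SinhaCptn} together with \cite[Proposition 6.6]{SinhaCptn}, to check that the map is a homeomorphism onto its image after the quotient and that the quotient by translation/scaling remains a fibration with contractible fibers over each stratum. An alternative, if one prefers to avoid re-examining the compactification directly: both source and target are, by \cite{SinhaTop} and \cite{Salvatore}, models for the same space up to the relevant equivalence, and one can pull the equivalence back through $\mathcal{G}_n$; but the natural-transformation argument above is cleaner because it produces the specific map $\iota$.

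The main obstacle I anticipate is precisely the behavior at the boundary of the compactification: away from the strata everything is an elementary torsor/fiber-bundle computation, but one must be careful that ``forgetting the spatial location of the two pinned endpoints'' is well-defined and continuous on all of $C^{fr}_{k+2}\la\In^3\ra|_{\d}$ — in particular when interior points collide with, or limit onto, the pinned endpoints — and that the resulting map to $\widetilde{C}^{fr}_k\la\R^3\ra$ hits the correct component (the one where the $k$ points are in cyclic order and the endpoint directions point along $\pm(1,0,0)$), which is exactly the compatibility reflected in the coface maps $d^0,d^{n+1}$ distinguishing the two models. Once that continuity and the contractibility of the translation-scaling fibers over boundary strata are nailed down, the homotopy-limit argument closes the proof.
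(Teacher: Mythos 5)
The core idea of your argument — reduce to a levelwise statement via a natural transformation of $\P_\nu[n]$-diagrams — has a genuine gap: no such natural transformation exists. The coface maps $d^0$ and $d^{n+1}$ are defined incompatibly in the two cosimplicial models. In $C^{fr}_\bullet\la\In^3,\d\ra$, $d^0$ doubles the pinned left boundary point, so the new configuration point is placed \emph{at the left face of $\In^3$}, a position the other points see: the direction vectors from it to the remaining points are generically not $(1,0,0)$. In $\widetilde{C}^{fr}_\bullet\la\R^3\ra$, $d^0(x)=\mu\circ_2 x$ puts the new point \emph{at infinity}, so by construction all direction vectors to it are exactly $(1,0,0)$. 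Your levelwise map (forget the pinned endpoints, pass to the quotient by translation and scaling) therefore fails to intertwine $d^0$: tracing a configuration $(y_0, p_1, \ldots, p_k, y_1)$ both ways around the naturality square for $d^0$ produces $v_{12}=(p_1-y_0)/|p_1-y_0|$ in one direction and $v_{12}=(1,0,0)$ in the other. This is precisely the obstruction that the paper flags just before Definition~\ref{Extension} (``the obvious quotient maps do not define a map between these models''), and it is why $\iota$ is built through the global extension construction $\tilde\varphi$ — which depends on the whole aligned map, not on an individual configuration — rather than levelwise. The appeal to the factorization of $\circ_i$ through $\widetilde C^{fr}_m\la\R^d\ra$ on the inserted slot does not help here, since the discrepancy is not about that slot but about where the outer configuration places the new point.

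Your second paragraph, that each $C^{fr}_k\la\In^3,\d\ra\to\widetilde C^{fr}_k\la\R^3\ra$ is a homotopy equivalence, is correct and is in the same spirit as the map $q$ in the paper's argument. But without a commuting ladder of diagrams it does not on its own yield that $\iota$ is an equivalence. The paper's actual route is a zig-zag: it introduces a ``hemispherical'' intermediary $\AM^{hs}_n$ (same as $\AM_n$ except the boundary constraints on $v_{1j}$ and $v_{kn}$ are relaxed from $=(1,0,0)$ to lying in the non-negative-$x$ hemisphere), maps both $AM_n$ (by the levelwise quotient $q$) and $\AM_n$ (by the inclusion $i$) into it by equivalences, and then shows $i\circ\iota\simeq q$ via the family of reparametrizations $\iota_x$. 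If you want to repair your approach, you would need a similar two-sided comparison rather than a single natural transformation; the hemispherical model is exactly the device that absorbs the incompatibility between ``new point at the face of the cube'' and ``new point at infinity.''
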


\begin{proof}
To compare $ AM_n$ and $\AM_n$ it is simplest to map them both to a third space which is easily seen to be equivalent.  
Let $\AM^{hs}_n$, where $hs$ stands for hemispherical, be the subspace of $\Map(\Delta^n, \widetilde{C_n} \la \In^3 \ra)$ defined by the same conditions
as for $\AM_n$ for all faces except the $t_1 = -1$ and $t_n = 1$ faces.  On those faces, instead of the vectors $v_{1j}$ and $v_{kn}$ in being $(1,0,0)$ as
in the definition of $\AM_n$,
for $\AM^{hs}_n$ those vectors are simply required to lie in the hemisphere with nonnegative $x$-coordinate. (We could also define this as a homotopy limit.)

Consider the  diagram
\[
\xymatrix{
 AM_n  \ar[d]_-{q}^-\simeq \ar[dr]^-{\iota} & \\
  \AM^{hs}_n  &  \AM_n, \ar[l]^-{i}_-\simeq
}
\]
where $q$ is the map induced by composition of $\Delta^n \to C_n \la \In^3, \partial \ra$ with the quotient map to $\widetilde{ C_n} \la \In^3 \ra$
and $i$ is the inclusion.  Both of these are homotopy equivalences.  The diagram commutes up to homotopy,  interpolating  between $q$ and $i \circ \iota$
by first extending $\tilde{\varphi}$ to $[-x, x]$.  Then use a continuous family of homeomorphisms $h_x$ of $\In$ with $[-x,x]$ to define a map 
$\iota_x$ which serves as a homotopy.  It is then elementary that the composite of $i$ with the homotopy inverse
of $q$ serves as a homotopy inverse to $\iota$.
\end{proof}

The main idea in the definition of $\iota$, and in particular the 
construction of $\tilde{\varphi}$, is ``focusing'' on the interval $\In = [-1,1]$ by modifying configurations in $\R$ 
with points outside the interval to have points at the endpoints of the interval instead.  Corresponding points in the configuration are replace by
standard points along the $x$-axis.  We will use similar ideas
 in the development of multiplicative structures on $AM_n$ and $\AM_n$.

\section{Multiplicative structures}
\label{LittleIntervals}

In this section we first define an action of the  little intervals operad $\C_1$ on the 
framed knot space and $AM_n^{fr}$, which
are compatible via the evaluation map.  
Then we construct a multiplication
 which is homotopy commutative on $\AM_n^{fr}$.  We show that these actions are all compatible up to homotopy.

\begin{definition}\label{C1definition}
The little intervals operad $\C_1$ has as its $n$th entry $\C_1(n)$ the space of $n$ disjoint subintervals of $\In$, 
which we topologize as a subspace of $\In^{2n}$ through the endpoints of the intervals.  

Given an subinterval $L \subseteq \In$ we by abuse let $L$ also denote the orientation-preserving affine-linear transformation which sends $\In$ to $L$. 

For use with operad actions on knot spaces, we let $\widehat{L}$ denote the map $\In^3 \to \In^3$ which applies $L$ to the first coordinate, 
and then shrinks the second and third coordinates by the same scaling factor (but doesn't translate them).
\end{definition}

\subsection{The little intervals action on the knot space}
\label{C1ActionOnTheKnotSpace}

If $\mathcal{L} = \bigcup L_i$ is a union of $k$ disjoint 
little intervals, its action on a $k$-tuple of
embeddings $f_i  : \R \to \R^3$ yields the embedding which at time $t$ has value
\begin{align*}
\mathcal{L}\cdot (f_1, \cdots, f_k)(t) = 
\left\{
\begin{array}{ll}
 \widehat{L_i} \circ f_i \circ {L_i}^{-1}(t)  & t \in {\rm{some}} \;\; L_i\\
(t, 0, 0) & \mbox{otherwise}. 
\end{array}
\right. 
\end{align*}
That is, the embeddings are ``shrunk and placed in succession'' according to $\mathcal{L}$.  
 The action on 
 $\Emb^{fr}(\R, \R^3)$ is similar, with the framings unchanged by the shrinking.   
 We will view this action as a case of ``insertion into the trivial embedding, with standard framing.''

\subsection{The spatial little intervals action on aligned maps} \label{SpatialAction}
Defining a $\C_1$ action on $AM_n$ is more involved, guided by wanting the evaluation map $ev_n$ to be compatible with the action. 
We want to take a
configuration in the interval and evaluate points in the various $L_i$ on different elements of $AM_n$. 
But typically fewer than $n$ points will be in each $L_i$,  so we adjust accordingly.  We first define restriction of an aligned map to some interval $L$.

\begin{definition}\label{restriction}
For $L \subset \In$, define $L^{-1}$ on some $\vec{t}$ in the interval by applying to each $t_i$ the piecewise-linear map which is 
$L^{-1}$ on the image of $L$, sends points to the left of $L$ to $-1$, and sends points to the right of $L$ to $1$.

Define the restriction of  $\phi \in AM_n$ to $L$, denoted $\phi |_L$, by applying $\phi$ to $L^{-1}(\vec{t}\;)$ and then applying 
projection maps to forget all of the
points in the resulting configurations whose indices $j$ do not correspond to a $t_j$ in the interior of $L$.
\end{definition}

 This is not continuous, as the different
projection maps depending on the number of $t_i$ in $L$ produce points in different configuration spaces, 
but it is an essential auxiliary construction.

\begin{definition}\label{IntervalsActionAM}
Define the action of a union of little intervals $\mathcal{L} = \bigcup L_i$ on a collection of maps $\phi_i \in AM_n$ as the map in $AM_n$ which takes $\vec{t} \in \Delta^n$ to the union of all of the $\widehat{L_i} \circ \phi_i |_{L_i}$ applied to $\vec{t}$ along with a point in the configuration at $(t_j, 0, 0)$ for each $t_j$ which is not in the interior of any $L_i$.  
If for such a $t_j$ we have, say, $t_j = t_{j+1}$, then we set the ``direction of collision from $(t_j,0,0)$ to $(t_{j+1},0,0)$'', denoted $v_{i,i+1}$ when we have used coordinates, to be the positive $x$-axis direction.  

The $\C_1$-action on $AM_n^{fr}$ is defined in the same manner, where  the framings are unchanged because each interval shrinks an aligned map equally
in all directions.  
\end{definition}

This action varies continuously as $t_j$ approaches an endpoint $e$ of some $L_i$ because the limit from either side of this endpoint is 
the configuration with the corresponding point $x_j$ at $(e, 0, 0)$.
Checking continuity elsewhere is immediate, as is checking the usual conditions required for an action of the operad $\C_1$.
The definition was arranged so that this action is compatible with the $\C_1$ action on the knot space via the evaluation map.

\begin{proposition}
\label{EvaluationIsC1Map}
$ev_n  : \Emb^{fr}(\R, \R^3)  \to  AM^{fr}_n$ is a map of $\C_1$-spaces.
\end{proposition}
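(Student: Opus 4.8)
The plan is to unwind both $\mathcal{C}_1$-actions on an explicit union of little intervals $\mathcal{L} = \bigcup_i L_i$ and a tuple of framed embeddings $(f_1,\dots,f_k)$, and then check that $ev_n$ applied to $\mathcal{L}\cdot(f_1,\dots,f_k)$ agrees with $\mathcal{L}\cdot(ev_n f_1,\dots,ev_n f_k)$ as aligned maps $\Delta^n \to C_n^{fr}\la \In^3,\d\ra$. Since an element of $AM_n^{fr}$ is determined by the underlying map out of $\Delta^n$, it suffices to evaluate both sides on an arbitrary $\vec{t} = (t_1\le\cdots\le t_n)\in\Delta^n$ and compare the resulting configurations point-by-point, together with their infinitesimal (direction) and framing data. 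The only content is that the two recipes — ``shrink-and-place the knots, then sample at $\vec t$'' versus ``sample each knot, restrict to $L_i$, then shrink-and-place the sampled configurations'' — produce the same answer.

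First I would treat the spatial/positional coordinates. On the knot side, $\mathcal{L}\cdot(f_1,\dots,f_k)$ sends $t$ to $\widehat{L_i}\circ f_i\circ L_i^{-1}(t)$ when $t$ lies in some $L_i$ and to $(t,0,0)$ otherwise, so $ev_n$ of it sends $\vec t$ to the configuration whose $j$-th point is $\widehat{L_i}(f_i(L_i^{-1}(t_j)))$ if $t_j\in L_i$ and $(t_j,0,0)$ if $t_j$ is outside every $L_i$. On the $AM_n$ side, Definition~\ref{IntervalsActionAM} builds the configuration from the pieces $\widehat{L_i}\circ \phi_i|_{L_i}$ (here $\phi_i = ev_n f_i$) applied to $\vec t$, together with points $(t_j,0,0)$ for $t_j$ outside all $L_i$; by Definition~\ref{restriction}, $\phi_i|_{L_i}(\vec t)$ applies $\phi_i = ev_n f_i$ to $L_i^{-1}(\vec t\,)$ and then forgets the points not indexed by a $t_j$ in the interior of $L_i$, so the surviving $j$-th point is $f_i(\widehat{L_i^{-1}}(t_j)) = f_i(L_i^{-1}(t_j))$ since on the interior of $L_i$ the piecewise-linear map of Definition~\ref{restriction} is genuinely $L_i^{-1}$. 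Applying $\widehat{L_i}$ then matches the knot-side formula exactly. The edge cases $t_j\in\d L_i$ are where one checks continuity, but both definitions were rigged so that the limiting value is $(e,0,0)$ for the relevant endpoint $e$, so they agree there too.

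Next I would handle the infinitesimal directions $v_{jk}$ and the framings. For pairs $j,k$ with both $t_j,t_k$ in the same $L_i$, the direction $v_{jk}$ is unaffected by the uniform scaling $\widehat{L_i}$ (it rescales second/third coordinates by the same factor as the first, hence acts on $C_n\la\In^3\ra$ compatibly with the projection to spheres), so it equals the corresponding direction coming from $ev_n f_i$, matching on both sides. For pairs where at least one of $t_j,t_k$ sits outside all intervals, or in different intervals, both definitions assign the ``standard'' direction — the positive (or appropriately signed) $x$-axis direction — by the explicit convention in Definition~\ref{IntervalsActionAM} on one side and by the fact that $\widehat{L_i}\circ f_i\circ L_i^{-1}$ is the standard line $t\mapsto(t,0,0)$ outside $L_i$ on the other; I would cite that the closure operation defining $C_n\la M\ra$ forces exactly these limiting directions, as was used already in Definition~\ref{Extension}. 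The framings are the simplest case: on the knot side the $\mathcal{C}_1$-action leaves framings unchanged under shrinking, and on the $AM_n^{fr}$ side Definition~\ref{IntervalsActionAM} says the same because each $\widehat{L_i}$ is a uniform scaling and thus acts trivially on the $O(3)$-factors; outside the intervals the framing is the identity element on both sides by the standardness conventions on $\R\setminus\In$.

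Finally I would verify that the comparison is natural enough to be an equality of $\mathcal{C}_1$-\emph{space} maps and not merely a level-wise bijection of underlying sets: continuity of $ev_n$ and of both actions is already established (Definition~\ref{restriction} and the paragraph after Definition~\ref{IntervalsActionAM}), so the only remaining point is that the two composites $\mathcal{C}_1(k)\times\Emb^{fr}(\R,\R^3)^k \to AM_n^{fr}$ agree, which follows from the pointwise identification above since both are continuous and agree on the dense subset where no $t_j$ is an interval endpoint. I expect the main obstacle to be purely bookkeeping: carefully lining up the index conventions — the hat-index reindexing $\widehat{j}$ from the insertion maps, the forgetting-of-points in the definition of restriction, and the piecewise-linear collapse $L_i^{-1}$ that sends exterior points to $\pm 1$ — so that ``the $j$-th point of the output on one side'' really is ``the $j$-th point of the output on the other side,'' and making sure the collision-direction convention for colliding exterior points is consistent with the one implicitly forced by the compactification. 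None of this is deep, but it is the kind of place where an off-by-one in the reindexing would invalidate the claim, so the write-up should display the two configurations side by side and match coordinates explicitly.
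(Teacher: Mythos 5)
Your approach — unwinding both actions and matching the resulting configurations coordinate-by-coordinate — is the right one, and it is in fact the only thing the paper has in mind: the paper gives no proof for this proposition, merely observing in the preceding paragraph that ``the definition was arranged so that this action is compatible with the $\C_1$ action on the knot space via the evaluation map.'' Your write-up is exactly the routine verification the paper elides.

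One substantive inaccuracy you should fix before committing this to paper. In the paragraph on direction data, you claim that for pairs $(j,k)$ where $t_j$ and $t_k$ lie in different intervals (or one lies outside all intervals), ``both definitions assign the standard direction — the positive $x$-axis direction.'' That is false and not what Definition~\ref{IntervalsActionAM} says. The explicit $(1,0,0)$ convention there applies \emph{only} to two coinciding exterior points ($t_j = t_{j+1}$, both outside every $L_i$), i.e., to the degenerate stratum where the direction cannot be read off from positions. For spatially distinct points the direction $v_{jk}$ is simply $\frac{x_j - x_k}{|x_j - x_k|}$, and if, say, $t_j\in L_1$ and $t_k\in L_2$, the actual positions $\widehat{L_1}(f_1(L_1^{-1}(t_j)))$ and $\widehat{L_2}(f_2(L_2^{-1}(t_k)))$ need not be collinear with the $x$-axis: $\widehat{L_i}$ only shrinks the transverse coordinates, it does not kill them. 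The correct observation — which rescues your argument — is that for distinct positions, direction data is \emph{determined} by position data, which you have already matched; so no separate case analysis for distinct cross-interval pairs is needed. The only places one must check directions by hand are the coincidence strata: $t_j=t_k$ in the interior of the same $L_i$ (where both sides give the scaled tangent direction, and $\widehat{L_i}$ acts as a uniform scalar so fixes directions), $t_j=t_k$ at an endpoint of an $L_i$ (where both sides degenerate to $(1,0,0)$ because $f_i$ is standard near $\pm 1$ and $\mathcal{L}\cdot f$ has derivative $(1,0,0)$ at the endpoint), and two coinciding exterior points (the explicit convention, matched on the knot side by the standard slope).

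A minor bookkeeping remark: the ``hat-index reindexing $\widehat{j}$'' you flag as a potential pitfall is part of the insertion maps $\circ_i$ from Section~\ref{InsertionMaps}, which are not actually invoked in Definition~\ref{IntervalsActionAM} — that definition uses restriction and union directly, not $\circ_i$. The reindexing you do need to track is the forgetting of points in Definition~\ref{restriction} and the replacement of forgotten indices by exterior points $(t_j,0,0)$; your indexing argument is correct on that front, so this is a one-line clarification rather than a gap.
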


For this paper we only need that this is a map of H-spaces.  
We will also need that the action is compatible with the structure maps in the Goodwillie-Weiss tower, which will be a main focus of 
Section~\ref{PropertiesOfProjMaps}. 
\subsection{A spatial-infinitesimal single little interval action}

In his work on operad actions on knot spaces \cite{Budney}, 
the first author extensively uses the fact that embeddings of $\R \times D^2$ into $\R \times D^2$ can 
be composed.  Our main idea in establishing homotopy commutativity of the multiplication(s) on  $AM_n^{fr}$ is to set  up  composition.  
In order to do so, we produce  products where the spatial points
are along the evaluation map of the unknot, and use infinitesimal composition for the essential part of the multiplication.
Given an interval $L$ and $\phi \in \AM^{fr}_n$, 
we produce an aligned map with infinitesimal configuration at the point $(L(0),0,0)$ together with some points along $\In \x (0,0)$.  For continuity, 
we need points near the boundary and, say, inside of $L$ to be pulled towards $L(0)$.

Let $L^\circ = L([-\frac{1}{2}, \frac{1}{2}])$, which is the ``core" of $L$.  
Let $e_L: \In \to \In$ be a monotone continuous map which sends 
$L^\circ$ to the point $L(0)$ and which is the identity outside of $L$.   By abuse use the same notation $e_L$ for the induced map on collections
of points in the interval.
Let $\iota(\vec{t}, L)$ be the configuration in $\In^3$ that has a point at $(L_i(0),0,0)$ for each $i=1,...,k$ and a point $(e(t_j),0,0)$ for each $t_j$ not contained in any $L$.  
%
%

As in previous constructions, we  define the map $L \cdot \phi$ piecewise on $\Delta^n$ according to the partition defined by which
indices of $t_i$
 occur in $\vec{t} \cap L^\circ$.  Let $i$ and $j$ be the indices of the leftmost and rightmost $t_k$ in $L^\circ$, which we will consider as functions
of $L^\circ$ and $\vec{t}$.  
For an element $\phi \in \AM^{fr}_n$, let $\phi|_L$ be defined similarly as in  Definition~\ref{restriction} for the $AM_n$ setting,
though this now produces {equivalence classes} of configurations module translation and scaling, with framings. 

\begin{definition}\label{FirstInfinitesimalIntervalAction}
With notation as above, for $\phi \in \AM_n$ define $L \cdot \phi \in AM_n$ by
\begin{equation*}
(L \cdot \phi) (\vec{t}) = 
(e_L( t_1,..., t_{i-1}, L(0), t_{j+1}, ..., t_n) \x (0,0)) \circ_{i(L^\circ, \vec{t})} \phi|_{L^\circ}(\vec{t}).
\end{equation*}
\end{definition}
For a fixed $L$, it is clear that the output varies continuously with $\phi$ and with all $\vec{t}$ which have the same indices occurring in $\vec{t} \cap L^\circ$.  
To check that the various pieces fit together to a continuous function, suppose that some $t_i$ is equal to $L(-\frac{1}{2})$, the left endpoint of $L^\circ$.  The formulae on the two pieces of $\Delta^n$ that meet at $t_i = L(-\frac{1}{2})$ are 
\[
((e_L(t_1, t_{i-1}, L(0), t_{j+1}, ...,t_n) \x(0,0)) \circ_i \phi((L^\circ)^{-1}(t_i,t_{i+1},...,t_j))
\]
and using that $e_L(t_i)=e_L(L(-\frac{1}{2}))=e_L(L(0)) = L(0)$,
\[
(e_L(t_1,..., t_{i-1}, L(0), L(0), t_{j+1}, ..., t_n) \x (0,0)) \circ_{i+1} \phi((L^\circ)^{-1}(t_{i+1},...,t_j)).
\]
The key point is that, by definition of the doubling maps in $\tC^{fr}_\bullet \la \In^3, \d \ra$, $\phi$ sends a point in $\d \In$ to a point 
``that looks infinitely far away from the images of the interior points in $\In$.''  Here it is essential that we are starting with elements of
the operadic mapping space model rather than the spatial model.

To elaborate, using our standard coordinates on these compactifications,
for any $k \in \{i+1,...,j\}$, $v_{ik}=(1,0,0)$ in either of the two configurations above, and every other $v_{k\ell}$ ($1\leq k\leq \ell \leq n$) 
is the same in these two configurations as well.  Furthermore, the projections to $(\In^3)^n$ of the two expressions agree, 
so they are the same configuration in $C^{fr}_n \la \In^3, \d \ra$.

Checking continuity between any other two pieces of $\Delta^n$ is similar, as is checking continuity if we vary both $\vec{t}$ and $L$.

If we compose $L \cdot \phi$ with  projection to $\widetilde{C}_n \la \R^3 \ra$, the resulting map 
satisfies the conditions of being in $\AM_n$.  We need the spatial information in the next section.


\subsection{A commutative multiplication on infinitesimal aligned maps}
\label{InfinitesimalAction}
Now we define a multiplication on $\AM^{fr}_n$ determined by a choice of two intervals $L_1,  L_2 \in \C_1(1)$, one entry in
the operad of ``overlapping intervals.''  Here when two intervals overlap, they are also ordered.  Informally, this order  says which interval is ``on top.''  
Since $\C_1(1)$ is connected,  this product will be homotopy-commutative.  


To see homotopy-commutativity of connect-sum holds for knots coherently 
as part of an operad action,
it is technically necessary to ``thicken'' the knots, by studying embeddings of $\R \times D^2$ in $\R \times D^2$.  
In \cite{Budney}, the first author shows that such embedding spaces have an action of the overlapping intervals operad, and 
moreover uses this action to determine the homotopy type
of the space of classical knots. As we work with evaluation maps between compactified configuration spaces, our substitute for $\R \times D^2$ is the following.

\begin{definition}
  Let $C_n^{it} \la \In \x (0,0), \d \ra$, the space of infinitesimally thickened configurations in an interval,  be the subspace  
 of $C_n^{fr} \la \In^3, \d \ra$ whose image under the projection to $(\In^3)^n$ lies in $(\In \x (0,0))^n$.
 \end{definition}
 
So far $L\cdot \phi$ is a map $C_n \la \In, \d \ra \to C_n^{fr} \la \In^3, \d \ra$ whose image lies in this subspace $C_n^{it} \la \In \x (0,0), \d \ra$.
 We can view $C_n \la \In, \d \ra$ as a subspace of $C_n^{it} \la \In \x (0,0), \d \ra$ in an obvious way with identity framings at every point. 
We will now extend the domain of $L\cdot \phi$ from $C_n \la \In, \d \ra$ to all of $C_n^{it} \la \In \x (0,0), \d \ra$.

For any $c \in C_n^{it} \la \In \x (0,0), \d \ra$, let $\vec{t}(c)=(t_1<...<t_m)$ be the set of \emph{distinct} points in $p(c)$ (so $m\leq n$).  Then $c$ can be written  as 
\begin{equation}
\label{ExpressConfig}
c = (...(\vec{t}(c) \x (0,0))\circ_{m_1} c_1)\circ_{m_2} ...)\circ_{m_k} c_k
\end{equation}
for some $c_i \in \tC_{n_i} \la \In^3 \ra$.  This expression is unique once 
we require that $m_i \geq m_{i-1} + n_{i-1}$, so that the underlying points of insertion are distinct.

\begin{definition}
Define the extension of $L \cdot \phi$ to $C_n^{it} \la \In \x (0,0), \d \ra$  as
\begin{equation*}
(L \cdot \phi)(c) = (...(((L\cdot \phi)(\vec{t}(c)))\circ_{m_1} c_1)\circ_{m_2} ...)\circ_{m_k} c_k.
\end{equation*}
Here the $L \cdot \phi$  on the right hand side is as in Definition~\ref{FirstInfinitesimalIntervalAction}, using the 
identification of  $C_n\la \In, \d \ra$ with $\Delta^n$.
\end{definition}

The key point now is checking continuity when points enter or exit the infinitesimal configurations $c_i$.  
The argument is just as for continuity in Definition~\ref{FirstInfinitesimalIntervalAction}
but with  $(1,0,0)$ replaced by the tangent vector, which is the first vector in the framing, at $\phi(t_{m_i})$.

Now both the input and output of the map $L \cdot \phi$ can be regarded as elements in $C_n^{it} \la \In \x (0,0), \d \ra$.  Thus two such maps can be composed, and we denote the composition by $\circ$.

\begin{definition}
\label{SecondInfinitesimalIntervalAction}
Given two little intervals $L_1, L_2 \in \C_1(1)$,  define the product of elements $\phi, \psi \in \AM^{fr}_n$ as 
\begin{equation*}
\label{ActionOfTwoOverlappingIntervals}
\mu_{L_1, L_2} (\phi, \psi)) : \vec{t} \mapsto ((L_2 \cdot \psi) \circ (L_1 \cdot \phi)) (\vec{t}),
\end{equation*}
where we take the equivalence class in $\tC^{fr}_n \la \In^3 \ra$ of the right-hand side.

Let $L_-=[-1,0], L_+=[0,1]$.  Abbreviate $\mu_{L_-, L_+}$ as $\mu$, and abbreviate $\mu(\phi, \psi)$ as $\phi \cdot \psi$.
\end{definition}

\begin{theorem}
\label{HtpyCommThm}
 $\phi \cdot \psi$ is homotopic to $\psi \cdot \phi$.
\end{theorem}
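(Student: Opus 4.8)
The plan is to exploit the parametrization of the multiplication $\mu_{L_1,L_2}$ by a point in the ``overlapping intervals'' operad and then use connectedness of that parameter space to produce the required homotopy. Concretely, I would first observe that $\mu = \mu_{L_-,L_+}$ and $\mu^{\mathrm{op}}(\phi,\psi) := \mu(\psi,\phi)$ both arise by the same formula $\vec{t} \mapsto ((L_2\cdot\psi)\circ(L_1\cdot\phi))(\vec t)$ for appropriate labelled pairs of little intervals. The product $\phi\cdot\psi$ uses the ordered pair $(L_-,L_+)$ with, say, $L_-$ on the bottom and $L_+$ on top; the product $\psi\cdot\phi$ uses an ordered pair of intervals in which the $\psi$-interval lies to the left of the $\phi$-interval, again one on top of the other. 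The key structural point, following Budney's use of the overlapping intervals operad in \cite{Budney}, is that both of these are specializations of a two-parameter construction $\mu_{L_1,L_2}$ defined whenever $L_1,L_2$ are little intervals that are \emph{ordered} (i.e.\ with a choice of which is ``on top''), \emph{including the case where they overlap}. So the proof reduces to exhibiting a path in this space of ordered pairs of (possibly overlapping) intervals from the configuration realizing $\phi\cdot\psi$ to the one realizing $\psi\cdot\phi$, and checking that $\mu_{L_1,L_2}(\phi,\psi)$ varies continuously in $(L_1,L_2)$.

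The path I would use is the standard ``sliding past'' move: start with the $\phi$-interval on the left and the $\psi$-interval on the right (disjoint), slide them together until they coincide, passing through the overlapping regime where the order datum records that $\phi$ is on top, and continue sliding until the $\phi$-interval is on the right and the $\psi$-interval is on the left (disjoint again). Throughout this path the two intervals are ordered, so $\mu_{L_1,L_2}$ is defined, and at the two endpoints it agrees with $\phi\cdot\psi$ and $\psi\cdot\phi$ respectively (up to the harmless rescaling by which $[-1,0]$ and $[0,1]$ are replaced by the actual subintervals, which is absorbed by the affine identifications $\widehat L$ and by working modulo translation and scaling in $\widetilde C^{fr}_n\la\R^3\ra$). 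Precomposing this path of maps $\Delta^n \to \widetilde C^{fr}_n\la \R^3\ra$ with the evaluation/identification maps then gives the homotopy in $\AM^{fr}_n$. The only thing that must be said carefully is that the ``one on top'' datum makes the formula unambiguous precisely when the intervals overlap, which is exactly what the composition $\circ$ of infinitesimally thickened configurations encodes: $(L_2\cdot\psi)\circ(L_1\cdot\phi)$ first inserts the $\phi$-data, then the $\psi$-data, so the relative order of the two insertions is well defined along the entire path.

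The main obstacle I anticipate is continuity of $(L_1,L_2) \mapsto \mu_{L_1,L_2}(\phi,\psi)$ as the two intervals move through the overlapping regime and as points of $\vec t$ cross the various endpoints $L_i(\pm\tfrac12)$ (boundaries of the cores $L_i^\circ$) and enter or leave the infinitesimal configurations. This is genuinely the same continuity issue already handled twice in the section: once in Definition~\ref{FirstInfinitesimalIntervalAction}, where a point hitting $L(-\tfrac12)$ is shown to pass continuously between two pieces of $\Delta^n$ because $\phi$ sends boundary points of $\In$ to configuration points that ``look infinitely far away'' (this is why the operadic model $\AM^{fr}_n$, not the spatial model, is needed), and again in the extension of $L\cdot\phi$ to $C_n^{it}\la \In\x(0,0),\d\ra$, where $(1,0,0)$ is replaced by the tangent vector of $\phi$. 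So I would argue continuity of the homotopy by invoking exactly those two verifications, applied now with $L=L_1$ and $L=L_2$ varying, together with the observation that when the cores $L_1^\circ$ and $L_2^\circ$ become disjoint the two infinitesimal insertions happen at distinct spatial points and hence commute strictly. I would also need to note that when $L_1^\circ$ and $L_2^\circ$ overlap, the composite $(L_2\cdot\psi)\circ(L_1\cdot\phi)$ is still a well-defined element of $C_n^{it}\la\In\x(0,0),\d\ra$ because both $L_i\cdot(-)$ land in that subspace and composition $\circ$ is defined on it; this is where the thickening introduced via $C_n^{it}$ does its job.

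Finally, to keep the argument clean I would record the homotopy explicitly as a map $H\co \Delta^n \times [0,1] \to \widetilde C^{fr}_n\la\R^3\ra$, $H(\vec t, s) = \mu_{L_1(s),L_2(s)}(\phi,\psi)(\vec t)$, where $s\mapsto (L_1(s),L_2(s))$ is the chosen path of ordered interval-pairs with $(L_1(0),L_2(0))$ realizing $\mu$ on $(\phi,\psi)$ and $(L_1(1),L_2(1))$ realizing $\mu$ on $(\psi,\phi)$ after relabelling; verify $H(-,0)=\phi\cdot\psi$ and $H(-,1)=\psi\cdot\phi$ by unwinding the affine identifications; and verify $H$ is continuous by the two cited local computations plus the strict-commutativity-in-the-disjoint-range remark. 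That completes the plan.
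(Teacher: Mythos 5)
Your proposal is correct and takes essentially the same approach as the paper, whose proof is a terse two sentences: $(L,\phi)\mapsto L\cdot\phi$ is continuous in $L$, and $\C_1(1)$ is connected, so any two choices of $(L_1,L_2)$ give homotopic multiplications. You spell out the same idea with an explicit path of ordered interval-pairs, the relevant continuity checks (which the paper delegates to the earlier verifications for Definition~\ref{FirstInfinitesimalIntervalAction} and the extension to $C_n^{it}$), and the disjoint-regime strict-commutativity observation needed to identify the endpoint with $\psi\cdot\phi$, a step the paper leaves implicit.
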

\begin{proof}
The map $(L, \phi) \mapsto (L \cdot \phi)$ varies continuously with $L$.  Because
$\C_1(1)$ is connected,
 any two multiplications induced by choices of $(L_1, L_2)$ --
in particular $(L_-, L_+)$ and $(L_+, L_-)$ -- are homotopic. \end{proof}

\subsection{A compatible little intervals action on infinitesimal aligned maps}

We now compare the multiplications on the spatial model $AM_n^{fr}$ and the infinitesimal model $\AM_n^{fr}$.  We first present a simpler 
multiplication $\mu'$ on $\AM_n^{fr}$, which is homotopic to the one defined above.  The multiplication $\mu'$ avoids the use of the maps 
$e_L$ which pull points towards $L$, which are needed homotopy-commutativity.  The multiplication $\mu'$ also has the advantage of extending 
to an action of the little intervals operad.

\begin{definition}
\label{MuMultiplication}
Let $\mu'$ be the map 
$$
\mu' : \C_1(2) \x (\AM_n^{fr})^2 \to \AM_n^{fr}
$$
defined by 
$$
\mu'_{L_1,L_2}(\phi,\psi))(\vec{t}):=
\left( \left( (t_1,...,t_{i-1}, L_1(0), t_{j+1},...,t_{k-1}, L_2(0), t_{\ell+1},...,t_n)\x (0,0) \right) \circ_i \phi|_{L_1}(\vec{t}) \right) \circ_k \psi|_{L_2} (\vec{t})
$$
where $t_i, ...t_j$ are the points in $L_1$, and $t_k,...,t_\ell$ are the points in $L_2$.
Similarly, for any $k\geq 1$, define a map
\begin{equation}
\label{C1ActionOnInfConfigs}
\C_1(k) \x (\AM_n^{fr})^k \to \AM_n^{fr}
\end{equation}
which sends $((L_1,...,L_k),(\phi_1,...,\phi_k))$ to the result of inserting each $\phi_i|_{L_i}$ into the point $(L_i(0),0,0)$.
\end{definition}

It is straightforward to verify that the maps (\ref{C1ActionOnInfConfigs}) define a $\C_1$-action on $\AM_n^{fr}$, using the fact that $\widetilde{C}_n\la \In^3\ra$ 
records only directions of collision, as well as again using the fact that in the operadic models configuration points which corresponding to times $t_i$ which are
equal to $\pm 1$ are ``at infinity.''

\begin{proposition} 
\label{HtpicToMu}
Let $L_1, L_2$ be disjoint intervals.
The multiplication $\mu_{L_1, L_2}(\phi, \psi)$ is homotopic to the multiplication $\mu'_{L_1, L_2}(\phi, \psi)$.  
\end{proposition}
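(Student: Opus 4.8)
The plan is to construct an explicit homotopy between $\mu_{L_1,L_2}$ and $\mu'_{L_1,L_2}$ by interpolating the two features that distinguish them: (1) $\mu$ uses the core intervals $L_i^\circ$ and the maps $e_{L_i}$ that pull points inside $L_i$ (but outside $L_i^\circ$) toward the center $L_i(0)$, whereas $\mu'$ simply evaluates $\phi|_{L_i}$ directly and places all points not in $L_i$ verbatim; and (2) $\mu$ is presented as an iterated composition $(L_2\cdot\psi)\circ(L_1\cdot\phi)$ of self-maps of $C_n^{it}\la \In\x(0,0),\d\ra$, while $\mu'$ inserts the two infinitesimal configurations $\phi|_{L_1}$ and $\psi|_{L_2}$ into a single background configuration in one step. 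Since $L_1,L_2$ are assumed disjoint, the infinitesimal configurations produced at the two distinct points $(L_1(0),0,0)$ and $(L_2(0),0,0)$ do not interact, so the iterated-composition description of $\mu$ in fact agrees on the nose with the simultaneous-insertion description; I would dispose of difference (2) first by this observation, reducing to comparing $\mu$ with $\mu'$ as maps built from the same insertion data but with differing background point placements.

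First I would set up the interpolation for difference (1). For $s\in[0,1]$, let $L_i^{\circ,s}$ interpolate between $L_i^\circ$ (at $s=1$, say) and $L_i$ itself (at $s=0$), e.g.\ $L_i^{\circ,s}=L_i([-\tfrac{1}{2}(1-s)-\tfrac{s}{2}\cdot 0,\ \dots])$ scaled appropriately — more simply, take a family of monotone maps $e_{L_i}^s\colon \In\to\In$ with $e_{L_i}^1=e_{L_i}$ and $e_{L_i}^0=\id$, each equal to the identity outside $L_i$, collapsing a shrinking subinterval $L_i^{\circ,s}$ to $L_i(0)$. Define $\mu^s_{L_1,L_2}(\phi,\psi)$ by the same formula as $\mu$ but with $L_i^\circ$ replaced by $L_i^{\circ,s}$ and $e_{L_i}$ replaced by $e_{L_i}^s$ throughout. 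At $s=0$ the core is all of $L_i$, the pull-in map is trivial, and the background points are exactly $(t_1,\dots,t_{i-1},L_1(0),t_{j+1},\dots)$ as in $\mu'$, while the inserted infinitesimal configuration is $\phi|_{L_i}$ (as in $\mu'$), so $\mu^0=\mu'$; at $s=1$ we recover $\mu$. The continuity of each $\mu^s$ in $\vec t$ and in $\phi,\psi$ is exactly the check carried out in Definition~\ref{FirstInfinitesimalIntervalAction} and its extension to $C_n^{it}\la\In\x(0,0),\d\ra$ — the argument there never used $s=1$ specifically, only that $e_{L_i}^s$ is monotone, fixes $\d\In$, and collapses a subinterval to $L_i(0)$, and that $\phi$ sends boundary points of $\In$ to configuration points "at infinity" (which is where using the operadic model $\AM_n^{fr}$ rather than the spatial model is essential). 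Joint continuity in $(s,\vec t,\phi,\psi)$ follows the same way, giving a genuine homotopy $\mu\simeq\mu'$.

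The main obstacle I anticipate is the bookkeeping at the seams: as $s$ varies and as $\vec t$ varies, the index sets "$t_i,\dots,t_j$ in $L_1^{\circ,s}$" and "$t_k,\dots,t_\ell$ in $L_2^{\circ,s}$" change, and one must verify that the piecewise formula for $\mu^s$ glues continuously across \emph{all} of these walls simultaneously — both the walls where a $t_a$ crosses an endpoint of $L_i^{\circ,s}$ (handled as in Definition~\ref{FirstInfinitesimalIntervalAction}) and, because the two cores are shrinking, the walls where a point that was counted in $L_i$ under $\mu'$ ceases to lie in $L_i^{\circ,s}$ and migrates to the background. Here disjointness of $L_1$ and $L_2$ is what keeps the two families of walls independent, so no new phenomenon arises beyond the single-interval case. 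Once continuity of the family $\{\mu^s\}$ is in hand, composing with the quotient $C_n^{fr}\la\In^3,\d\ra \to \widetilde{C}_n^{fr}\la\R^3\ra$ lands everything in $\AM_n^{fr}$ and completes the proof; checking that each $\mu^s(\phi,\psi)$ actually satisfies the aligned-map conditions of $\AM_n^{fr}$ is the same verification already made for $\mu$ and for $\mu'$, using that $\widetilde{C}_n\la\In^3\ra$ records only directions of collision.
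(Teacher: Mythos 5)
Your proposal is correct and takes essentially the same route as the paper: the paper's proof describes the same homotopy as one that ``reparametrizes the domains'' of $\phi,\psi$ from $L_1^\circ, L_2^\circ$ out to $L_1, L_2$, which is exactly your interpolating family $L_i^{\circ,s}$. The paper compresses the argument by invoking up front that the quotient to $\widetilde{C}_n\la\In^3\ra$ forgets spatial positions of collinear points, so the two products differ only in which subintervals are collapsed; your write-up makes that same interpolation explicit (and adds the preliminary remark that disjointness of $L_1,L_2$ removes the iterated-versus-simultaneous insertion distinction, which the paper leaves implicit).
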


\begin{proof}
Since configurations of points along the $x$-axis are all equivalent in $\widetilde{C}_n\la \In^3 \ra$, the resulting aligned maps differ only in 
the subsets of $\In$ on which they are constant.  The aligned map resulting from $\mu'$ sends all $t_j$ between $L_1, L_2$ to the same 
configuration point, while the same is true for $\mu$ and $L_1^\circ, L_2^\circ$.
These are related by a homotopy which ``reparametrizes the domains'' of $\phi, \psi$ from $L_1^\circ, L_2^\circ$ to $L_1, L_2$.
\end{proof}



Recall from Definition~\ref{Iota} our transformation $\iota$ from the spatial to the operadic mapping space models, which is an equivalence. 

\begin{proposition}
\label{CompatibleMultiplications}
The $\C_1$-actions on $AM_n^{fr}$ and $\AM_n^{fr}$ are compatible.  That is, the diagram
\[
\xymatrix{
\C_1(k) \x (AM^{fr}_n)^k \ar[r] \ar[d]_-{\iota^k} & AM^{fr}_n \ar[d]^-{\iota} \\
\C_1(k) \x (\AM^{fr}_{n})^k \ar[r] & \AM^{fr}_{n} 
}
\]
commutes up to homotopy, where the top horizontal map is the action given in Definition~\ref{IntervalsActionAM} and the bottom horizontal map is the action (generalizing the multiplication $\mu'$ given in Definition \ref{MuMultiplication}).
\end{proposition}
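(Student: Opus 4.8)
The plan is to reduce the claim to a combination of the two equivalences already established, namely $\iota \colon AM_n^{fr} \to \AM_n^{fr}$ from Proposition~\ref{IotaHomotopyEquivalence} and the homotopy $\mu \simeq \mu'$ from Proposition~\ref{HtpicToMu}, together with a direct geometric comparison on the nose (up to reparametrization of the domain simplex). First I would unwind both composites in the square on a test configuration $\vec{t} \in \Delta^n$ and a tuple $(\phi_1,\dots,\phi_k) \in (AM_n^{fr})^k$. Going right-then-down, we first form $\mathcal{L}\cdot(\phi_1,\dots,\phi_k)$ in $AM_n^{fr}$ as in Definition~\ref{IntervalsActionAM}: for each $t_j$ in the interior of some $L_i$ we get the point of $\widehat{L_i}\circ \phi_i|_{L_i}$, and for each $t_j$ outside all $L_i$ we get a standard point $(t_j,0,0)$; then $\iota$ reparametrizes the domain by $h$, pushes points outside $\In$ to the endpoints via $\tilde\varphi$, and passes to $\widetilde{C_n}\la\R^3\ra$. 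Going down-then-right, we first apply $\iota$ to each $\phi_i$ and then feed the results into the $\C_1$-action of Definition~\ref{MuMultiplication}, which inserts each $\iota(\phi_i)|_{L_i}$ at the point $(L_i(0),0,0)$.

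The key observation is that once one passes to $\widetilde{C_n}\la\R^3\ra$, all the ``spatial'' placement of the non-infinitesimal points along the $x$-axis becomes irrelevant: any configuration of collinear points along the $x$-axis represents the same point, and the only surviving data are the directions $v_{k\ell}$, which for a point $t_j$ outside all $L_i$ are forced to be $\pm(1,0,0)$. Hence the two composites differ only in (a) how the domain simplex $\Delta^n$ is subdivided according to which $t_j$ lie in the $L_i$ versus their cores $L_i^\circ$, and (b) the auxiliary reparametrizations $h$, $e_{L}$, and the piecewise-linear maps $L^{-1}$ used in restriction. So the second step is to exhibit an explicit homotopy that interpolates between the two subdivision/reparametrization schemes. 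Concretely I would first use Proposition~\ref{HtpicToMu} (extended to the $k$-ary action, which is the same argument applied factor-by-factor) to replace the $\C_1$-action generalizing $\mu'$ by the one built from the $L_i^\circ$-restrictions, i.e. the $\mu$-type insertions; at that point both composites restrict the $\phi_i$ to subintervals and insert infinitesimally, and the only remaining discrepancy is that the right-then-down route scales the $\phi_i$ spatially inside $\In^3$ (via $\widehat{L_i}$) and then forgets that scaling under the quotient, while the down-then-right route never introduces it. Since $\widetilde{C_n}\la\R^3\ra$ is a quotient by scaling, these agree, and the remaining interpolation between the homeomorphism $h$ of Definition~\ref{Iota} and the identity-plus-$e_{L}$ reparametrization is a straightforward contractible choice, handled by a family $\iota_x$/$h_x$ exactly as in the proof of Proposition~\ref{IotaHomotopyEquivalence}.

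The main obstacle I expect is bookkeeping rather than conceptual: making sure the piecewise definitions on the various strata of $\Delta^n$ (indexed by which indices $t_j$ fall in each $L_i$, or in $L_i^\circ$) glue to a genuinely continuous homotopy of maps $\Delta^n \to \widetilde{C_n}\la\R^3\ra$, and in particular that the insertion operations $\circ_i$ into the ``points at infinity'' behave correctly at the boundary strata. This is precisely the point where one needs, as emphasized after Definition~\ref{FirstInfinitesimalIntervalAction}, that one is in the operadic model so that a configuration point coming from a time $t_j = \pm 1$ (or a point pushed to an endpoint) genuinely ``looks infinitely far away'': the vectors $v_{k\ell}$ degenerate to $\pm(1,0,0)$ and match across adjacent strata. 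I would verify this continuity by the same local computation used there — checking that on a stratum boundary where some $t_i$ hits an endpoint of $L$ (resp.\ of $L^\circ$ or of $[L_i(0)]$) the two neighboring formulas agree as elements of $C_n^{fr}\la\In^3,\d\ra$ (equal projections to $(\In^3)^n$ and equal $v_{k\ell}$), hence a fortiori after projecting to $\widetilde{C_n}\la\R^3\ra$ — and then assemble the strata-wise homotopies, noting they agree on overlaps by construction. The naturality/compatibility of $\iota^k$ with the product on each factor is then automatic from the single-factor statement.
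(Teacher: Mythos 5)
Your overall scaffolding is reasonable — you correctly identify that one should first invoke Proposition~\ref{HtpicToMu} to trade the $\mu'$-style action for the $\mu$/$L_i^\circ$-style one, and that the final step is a reparametrization of the domain simplex. This matches the first and third steps of the paper's three-step homotopy. But the middle of your argument contains a genuine error that removes the crux of the proof.

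You write that after the reduction, ``the right-then-down route scales the $\phi_i$ spatially inside $\In^3$ (via $\widehat{L_i}$) and then forgets that scaling under the quotient, while the down-then-right route never introduces it. Since $\widetilde{C_n}\la\R^3\ra$ is a quotient by scaling, these agree.'' This is not correct, because $\widetilde{C_n}\la\R^3\ra$ is the quotient of the $n$-point configuration space by an \emph{overall} translation and positive scaling (Definition~\ref{ConfigDef}); it is not a quotient by independent rescaling of subconfigurations. The data that survives the quotient is exactly the collection of pairwise direction vectors $v_{j\ell}$. Going right-then-down, a point $x_j$ in the image of $\widehat{L_i}\circ\phi_i|_{L_i}$ sits in a box of finite (not infinitesimal) size around $(L_i(0),0,0)$, so the direction vectors from $x_j$ to points outside that box — to points $(t_{\ell},0,0)$ on the axis, or to points in another $\widehat{L_{i'}}$-box — are generically \emph{not} $\pm(1,0,0)$. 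Going down-then-right, the configuration $\phi_i|_{L_i}$ is inserted \emph{infinitesimally} at $(L_i(0),0,0)$ by the operad composition $\circ_i$, so all such ``mixed'' direction vectors \emph{are} $\pm(1,0,0)$. These two elements of $\widetilde{C_n}\la\R^3\ra$ are therefore different in general, and no appeal to the quotient makes them equal. (Your related claim that for $t_j$ outside all $L_i$ the $v_{k\ell}$ are forced to be $\pm(1,0,0)$ is wrong for the same reason.)

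What is missing is precisely the paper's second step: an explicit homotopy parametrized by $s\in[1,2]$ that continuously shrinks each spatial box $\widehat{L_i^\circ}(\In^3)$ to an infinitesimal configuration at $(L_i(0),0,0)$ — via scalings $J_s := [s-2,2-s]$ composed appropriately with $\widehat{L_i^\circ}$ — while simultaneously pulling the images of $L_i^\times$ toward $(L_i(0),0,0)$ to preserve continuity at the strata boundaries. This is the nontrivial interpolation between finite-size and infinitesimal configurations, and it is exactly where the mixed direction vectors deform from their finite-size values to $\pm(1,0,0)$. Your proposal replaces this geometric homotopy with an assertion of equality, which is false, so the argument as written does not go through.

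Minor additional issue: the step where you extend Proposition~\ref{HtpicToMu} to the $k$-ary case ``factor-by-factor'' is fine in spirit, but note that Proposition~\ref{HtpicToMu} is stated for \emph{disjoint} intervals; you need to say that the $L_i$ in $\C_1(k)$ are already disjoint and that the argument for $k=2$ is insensitive to $k$, which it is.
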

\begin{proof}
Recall that the $\C_1$ action on $AM^{fr}_n$ is defined by taking the union of the images of $\widehat{L_i} \circ \phi_i |_{L_i}$, along with points along the $x$-axis.  
We will perform a homotopy with three steps, parametrized by $s \in [0,3]$,
from the composite through the upper-right corner to the composite through the lower-left corner.  

The first step, as $s$ varies in $[0,1]$ is a straightforward homotopy from $(L_1, ..., L_k) \cdot (\phi_1,...,\phi_k)$ to $(L_1^\circ, ..., L_k^\circ) \cdot (\phi_1,...,\phi_k)$, where as before $L^\circ := L([-\frac{1}{2}, \frac{1}{2}])$.  The configuration produced by an aligned map in the image at $s=1$ is the union of configurations resulting from all the $\phi_i|_{L_i^\circ}$, together with points along the $x$-axis.  Roughly, we are ensuring that each aligned map is standard on some interval between each pair of intervals $L_i, L_{i+1}$, so that we can push apart and shrink configurations continuously.

Next define $L_i^\x := L_i \setminus L_i^\circ$.  In the second step, as $s$ varies in $[1,2]$, we scale the image of each $\phi_i|_{L_i^\circ}$ to an infinitesimal configuration at $(L_i(0),0,0)$.  In the notation of Section \ref{C1ActionOnTheKnotSpace}, we follow each aligned map by $(\widehat{L_i^\circ})^{-1} \circ\widehat{J_s}\circ \widehat{L_i^\circ}$, where we define $J_s:=[s-2,2-s]$ for $s\in [1,2]$.  
This can be done at the level of representatives in $C_n^{fr}\la \In^3, \d \ra$ coming from the top horizontal map.
The configurations become infinitesimal only at $s=2$, so continuity requires us to simultaneously pull the images of $L_i^\x$ toward $(L_i(0),0,0)$, from occupying $L_i^\x \x \{(0,0)\}$ at $s=1$ to occupying all of $L_i \x \{(0,0)\}$ at $s=2$.  

At $s=2$, the elements in $\AM_n^{fr}$ produce infinitesimal configurations $\phi_i|_{L_i^\circ}$, together with points on the $x$-axis between them (where the distances between them are \emph{not} recorded by the operadic compactification $\tC^{fr}_n\la \In^3\ra$).
This aligned map looks almost like the composite through the lower-left corner.  
The only difference is that this aligned map is constant on each component complementary to the $L_i^\circ$, rather than each component complementary to the $L_i$.
The last stage of the homotopy thus simply requires ``reparametrizing the domain'' of each $\phi_i$ from $L_i^\circ$ to all of $L_i$, as in the proof of Proposition \ref{HtpicToMu}.
\end{proof}

Because $\iota$ is a homotopy equivalence, we have the following.

\begin{corollary}
\label{CommutativeMultOnAM}
The $\C_1$ action on $AM_n^{fr}$ induces a homotopy-commutative multiplication.
\end{corollary}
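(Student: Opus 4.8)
The plan is to transport the homotopy‑commutativity already established on the operadic model across the equivalence $\iota$, using the compatibility of the two $\C_1$‑actions. Fix a point $(L_1,L_2)\in\C_1(2)$ with $L_1$ and $L_2$ \emph{disjoint} and $L_1$ to the left of $L_2$; the $\C_1$‑action of Definition~\ref{IntervalsActionAM} then determines a binary operation $m\co(AM_n^{fr})^2\to AM_n^{fr}$, $m(\phi,\psi)=(L_1,L_2)\cdot(\phi,\psi)$, whose homotopy class is independent of the chosen point within its component of $\C_1(2)$ by connectedness of that component. The corollary asserts exactly that $m\circ\tau\simeq m$, where $\tau$ interchanges the two factors of $(AM_n^{fr})^2$.

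First I would restrict the homotopy‑commutative square of Proposition~\ref{CompatibleMultiplications} (with $k=2$) to the slice $\{(L_1,L_2)\}\times(AM_n^{fr})^2$. Since the homotopy filling that square restricts to this slice, we obtain $\iota\circ m\simeq \mu'_{L_1,L_2}\circ(\iota\times\iota)$, with $\mu'_{L_1,L_2}$ the operation of Definition~\ref{MuMultiplication}. Choosing a homotopy inverse $r$ to $\iota$ (Proposition~\ref{IotaHomotopyEquivalence}) gives $m\simeq r\circ\mu'_{L_1,L_2}\circ(\iota\times\iota)$. Then, because $L_1,L_2$ were chosen disjoint, Proposition~\ref{HtpicToMu} yields $\mu'_{L_1,L_2}\simeq\mu_{L_1,L_2}$, hence $m\simeq r\circ\mu_{L_1,L_2}\circ(\iota\times\iota)$.

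It remains to see that $\mu_{L_1,L_2}$ is homotopy‑commutative, which is essentially Theorem~\ref{HtpyCommThm}: that theorem gives $\mu_{L_-,L_+}(\phi,\psi)\simeq\mu_{L_-,L_+}(\psi,\phi)$, and its proof shows, via the continuous dependence of $(L,\phi)\mapsto L\cdot\phi$ on $L$ together with the connectedness of $\C_1(1)$, that all the operations $\mu_{L,L'}$ for $(L,L')\in\C_1(1)^2$ lie in a single homotopy class; in particular $\mu_{L_1,L_2}\simeq\mu_{L_-,L_+}$, so $\mu_{L_1,L_2}\circ\tau\simeq\mu_{L_1,L_2}$. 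Combining the homotopies above and using $(\iota\times\iota)\circ\tau=\tau\circ(\iota\times\iota)$, we conclude $m\circ\tau\simeq r\circ\mu_{L_1,L_2}\circ\tau\circ(\iota\times\iota)\simeq r\circ\mu_{L_1,L_2}\circ(\iota\times\iota)\simeq m$, as desired. I do not expect a substantive obstacle here; the only point needing care is the bookkeeping identifying the three multiplications $m$, $\mu'_{L_1,L_2}$, $\mu_{L_1,L_2}$ across the two models via the cited propositions, together with the harmless replacement of the pair $[-1,0],[0,1]$ of Theorem~\ref{HtpyCommThm} by a genuinely disjoint ordered pair — legitimate by the same $\C_1(1)$‑connectedness argument, which is also what Proposition~\ref{HtpicToMu} requires. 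All of the real work is contained in the preceding results.
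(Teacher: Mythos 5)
Your argument is correct and is the same chain the paper has in mind: Proposition~\ref{CompatibleMultiplications} transports the $\C_1$-action on $AM_n^{fr}$ across $\iota$ to the one on $\AM_n^{fr}$, Proposition~\ref{HtpicToMu} identifies that operation up to homotopy with $\mu$, and Theorem~\ref{HtpyCommThm} gives homotopy-commutativity of $\mu$. The paper compresses all of this into the single remark ``because $\iota$ is a homotopy equivalence''; you have simply made explicit the intermediate identification $\mu'\simeq\mu$, which is indeed needed since $\C_1(2)$ itself is disconnected and commutativity of the operadic multiplication really rests on the overlapping-intervals space $\C_1(1)^2$ being connected.
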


We have not  used that the ambient dimension is three, so these results hold for knots in higher dimensional Euclidean spaces as well. 
Similar results were proven in \cite{SinhaOperads} for knots modulo immersions and would work
similarly for framed knots, 
but only for the {limit} of the $AM_n^{fr}$.  Recently Turchin \cite{VictorDelooping} has established a version of this theorem, 
along with  group structure as in Theorem~\ref{Pi0IsGroup} below,
for the stages in the tower in the cosimplicial model.   Dwyer and Hess have similar results for the limit \cite{DwyerHess}.
We were not able to show that the structure 
studied in Turchin's paper is compatible with the evaluation map,  which necessitated the present approach.

\begin{remark}
At this point we can explain the connection between Schubert's elementary geometric result that connect 
sum of knots is commutative \cite{Schubert} and Steenrod's deep, formal work on commutativity of cup 
product and operations in cohomology \cite{Steenrod}.  This connection was implied to us by the work of 
McClure and Smith \cite{MSMultivariable, MSCubes}, as applied in this setting by Sinha \cite{SinhaOperads},
whose product structures on totalizations of cosimplicial spaces are related to Steenrod's formulae for
higher cup products \cite{MSMultivariable}.  

In different notation from Definition~\ref{IntervalsActionAM}, the product of two aligned maps $f$ and $g$ is 
$$ (t_1, \cdots, t_n) \mapsto \bigcup_{t_i \leq 0 \leq t_{i+1}} \hat{f}(t_1, \cdots, t_i) * \hat{g}(t_{i+1}, \cdots t_n).$$
Here the union refers to a decomposition of the domain, the $n$-simplex; 
$\hat{f}$ and $\hat{g}$ are obtained from $f$ and $g$ by appending times and rescaling (as we did 
regularly this section); and $*$ indicates a ``stacking product'' of configurations in $\R^d$.

This is formally similar to the standard formula for cup product
$$ \phi \cup \psi (\sigma : [v_1, \cdots, v_n] \to X) = \sum \phi(\sigma|_{[v_1, \cdots v_i]}) 
\cdot \psi (\sigma|_{[v_{i+1}, \cdots, v_n]}).$$
Here $\phi$ and $\psi$ are cochains, $\sigma$ is a chain, and brackets $[\ ,\ ]$ around some variables refers
to the simplex given as convex linear combinations of those variables.  This sum is zero but for one term,
but as McClure and Smith show, it is the correct sum to write down for purposes of generalization.

Recall that Theorem~\ref{HtpyCommThm}  gives a homotopy from $\mu(f,g)$ to the multiplication defined by 
(\ref{ActionOfTwoOverlappingIntervals}) with $(L_1,L_2)=([-1,0],[0,1])$.  
By choosing a path in $\C_1(1)$ from $([-1,0],[0,1])$ to $([0,1],[-1,0])$ 
(and applying a homotopy from Proposition \ref{HtpicToMu}) we ultimately get a homotopy from $\mu(f,g)$ to $\mu(g,f)$. 
We choose the following path, which in the overlapping intervals setting the first interval always lies above the second.
Start with $\left([-1,0], [0, 1] \right)$; grow the second interval to obtain $\left( [-1, 0], [-1, 1] \right)$; then translate $[-1,0]$ to $[0,1]$; 
finally shrink $[-1,1]$ to $[-1,0]$ to obtain the pair $\left( [0,1], [-1,0] \right)$.

If we apply the formulae (\ref{ActionOfTwoOverlappingIntervals}) for the products of $f$ and $g$ governed by this path of $1$-disks,
we see a formal analogue for Steenrod's formula for cup-one, namely
$$ \phi \cup_1 \psi (\sigma) = \sum_{i<j} \phi(\sigma|_{[v_1, \cdots, v_i, v_j, \cdots, v_n]} \cdot 
															\psi (\sigma|_{[v_i, \cdots, v_j]}).$$
The main difference is that the product rather than using an underlying commutative ring 
uses operad insertion maps.
McClure and Smith show this to be an appropriate extension of Steenrod's formula to the operadic setting.
\end{remark}

\section{Maps and layers in the tower, and abelian group structure }
\label{PropertiesOfProjMaps}

 
Our goal is to show that each stage of the Goodwillie--Weiss tower for knots has an
abelian group structure compatible with connected sum.  Though we are primarily interested in mapping space models, 
we use the cosimplicial models and language around them as a starting point and key organizational tool.  Cosimplicial structures are also
essential for the study of spectral sequences below.  For our applications, we develop a variety of models for the maps in the totalization
tower of a cosimplicial space.

\subsection{Maps in the tower through projection and restriction}

In the cosimplicial realm the totalization tower is a basic object of study, dual in a sense to the skeletal filtration of a simplicial complex.
When using the functors $\mathcal{G}_n$ to pull back a subcubical diagram from a cosimplicial space, the maps from  $\ttot^n$ to $\ttot^{n-1}$
are induced by inclusions $\P_\nu[n-1] \to \P_\nu[n]$.  

In the mapping space models, this inclusion of indexing categories explicitly gives rise to the following.

\begin{definition}\label{TowerMap}
The restriction-projection $p_n : AM_n^{fr} \to AM_{n-1}^{fr}$ sends a map $\phi \in AM_n^{fr}$ to the composite 
\[
\xymatrix{
\Delta^{n-1} \ar[r]^-d & \Delta^n \ar[r]^-\phi & C_n^{fr} \la \In^3, \d \ra \ar[r]^-s & C^{fr}_{n-1} \la \In, \d \ra
}\]
where $d$ and $s$ are the images of a (face, degeneracy) pair whose composite is the identity.  
As any two such choices of a pair $(d, s)$ yield homotopic projections $AM^{fr}_n \to AM^{fr}_{n-1}$,
we take $d$ to be the map $d_n: (t_1,...,t_{n-1}) \mapsto (t_1,...,t_{n-1},1)$ and $s$ to be the map $s_n$ that forgets the last configuration point and framing.  

The restriction-projection map $\widetilde{p_n} : \AM^{fr}_n \to \AM^{fr}_{n-1}$ and non-framed versions are defined analogously.
\end{definition}

Then $p_n$ is our first model for the standard map  $\ttot^n C_\bullet^{fr} \la \In^d, \d \ra $ to $\ttot^{n-1} C_\bullet^{fr} \la \In^d, \d \ra $, and its main use is the following.

\begin{proposition}
\label{ProjectionIsC1Map}
The restriction-projection $p_n : AM^{fr}_n \to AM^{fr}_{n-1}$ is a map of $\C_1$-algebras.
\end{proposition}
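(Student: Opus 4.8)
The plan is to verify compatibility of $p_n$ with the little intervals action directly, by comparing the two composites $\C_1(k) \x (AM^{fr}_n)^k \to AM^{fr}_{n-1}$ on the nose (or up to homotopy). Recall from Definition~\ref{IntervalsActionAM} that the action of $\mathcal{L} = \bigcup L_i$ on $(\phi_1,\dots,\phi_k)$ evaluates $\vec{t} \in \Delta^n$ by restricting each $\phi_i$ to $L_i$, shrinking via $\widehat{L_i}$, and placing the points of $\vec{t}$ outside the $L_i$ along the $x$-axis. The restriction-projection $p_n$ is chosen concretely as precomposition with $d_n : (t_1,\dots,t_{n-1}) \mapsto (t_1,\dots,t_{n-1},1)$ followed by $s_n$, which forgets the last configuration point and framing. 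So I would unwind: on one side, apply the $\C_1$-action in $AM^{fr}_n$ and then append a last coordinate $t_n = 1$ and forget the resulting last point; on the other side, first forget the last point (via $p_n$, producing elements of $AM^{fr}_{n-1}$) and then apply the $\C_1$-action.

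\textbf{Key steps.} First I would fix $\vec{t} = (t_1,\dots,t_{n-1}) \in \Delta^{n-1}$ and trace through $d_n(\vec{t}\,) = (t_1,\dots,t_{n-1},1) \in \Delta^n$ under the $\C_1$-action on $AM^{fr}_n$. The crucial observation is that $t_n = 1$ lies on the boundary $\d\In$, hence it is not in the interior of any little interval $L_i \subseteq \In$ (the $L_i$ are disjoint subintervals of $\In$, and we may assume — or arrange by a preliminary homotopy, since $\C_1(k)$ deformation retracts onto such configurations — that none has $1$ as an interior point; in fact the action as defined already places $t_n=1$ as a standard point $(1,0,0)$). Therefore in the configuration $\mathcal{L}\cdot(\phi_1,\dots,\phi_k)(d_n(\vec{t}\,))$, the $n$-th point sits at $(1,0,0)$ along the $x$-axis, contributing nothing to any of the restrictions $\phi_i|_{L_i}$. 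Applying $s_n$ to forget this $n$-th point then yields exactly the configuration obtained by first forgetting the last point of each $\phi_i$ — i.e., applying each $p_n$ — and then running the $\C_1$-action in $AM^{fr}_{n-1}$, because restriction-to-$L_i$ commutes with forgetting a point whose domain coordinate is outside $L_i$. Second, I would check the framing bookkeeping: the framings are untouched by the shrinking maps $\widehat{L_i}$ in both actions, so the framing component of the comparison reduces to the trivial statement that forgetting the last frame commutes with forgetting frames of points outside the $L_i$. Third, I would note the continuity/degeneracy subtlety: when several $t_j$ (for $j < n$) coincide with an endpoint of some $L_i$ or with each other, Definition~\ref{IntervalsActionAM} prescribes the direction of collision; since these prescriptions are identical on both sides (they depend only on which $L_i$ the coinciding coordinates fall in, and $t_n = 1$ never participates), the two composites agree as maps into $C^{fr}_{n-1}\la \In^3, \d\ra$. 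Finally, since any two face-degeneracy pairs give homotopic restriction-projections, and the $\C_1$-action is well-defined up to the relevant homotopies, this suffices to conclude $p_n$ is a map of $\C_1$-algebras.

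\textbf{Main obstacle.} I expect the only real friction to be careful handling of the non-continuous auxiliary restriction construction (Definition~\ref{restriction}) and the way the pieces of $\Delta^n$ — indexed by which $t_i$ land in which $L_i$ — are glued. Precisely, one must confirm that the decomposition of $\Delta^{n-1}$ into these pieces is the ``same'' as the decomposition of the sub-face $\{t_n = 1\}$ of $\Delta^n$, so that the piecewise formulas match on each piece and on the walls between them. This is genuinely just a bookkeeping check — no new geometric input is needed beyond the key point that $t_n=1$ is a boundary/standard point — but it is where an argument could go wrong if one is cavalier, so I would write it out piece by piece, handling the generic piece first and then the collision walls.
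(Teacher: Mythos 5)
Your argument is correct and follows essentially the same route as the paper's: unwind both composites at a point $(t_1,\dots,t_{n-1},1)$, observe that $t_n=1$ lies outside the interiors of all $L_i$ and hence produces only the extra standard point $(1,0,0)$, and check that forgetting this extra point (either at the end via $s_n$, or inside each factor as part of $p_n\phi_i$) yields the same configuration. Your extra remarks about a preliminary homotopy in $\C_1(k)$ are unnecessary (no subinterval of $\In$ has the boundary point $1$ in its interior), and your phrase ``contributing nothing to the restrictions'' glosses over the fact that by Definition~\ref{restriction} the $n$-th coordinate is fed into $\phi_i$ before being projected away, but since $L_i^{-1}(1)=1$ in both composites this is harmless and the identification still goes through on the nose.
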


\begin{figure}
\begin{picture}(350, 450)
\put(60,0){\includegraphics[scale=0.4]{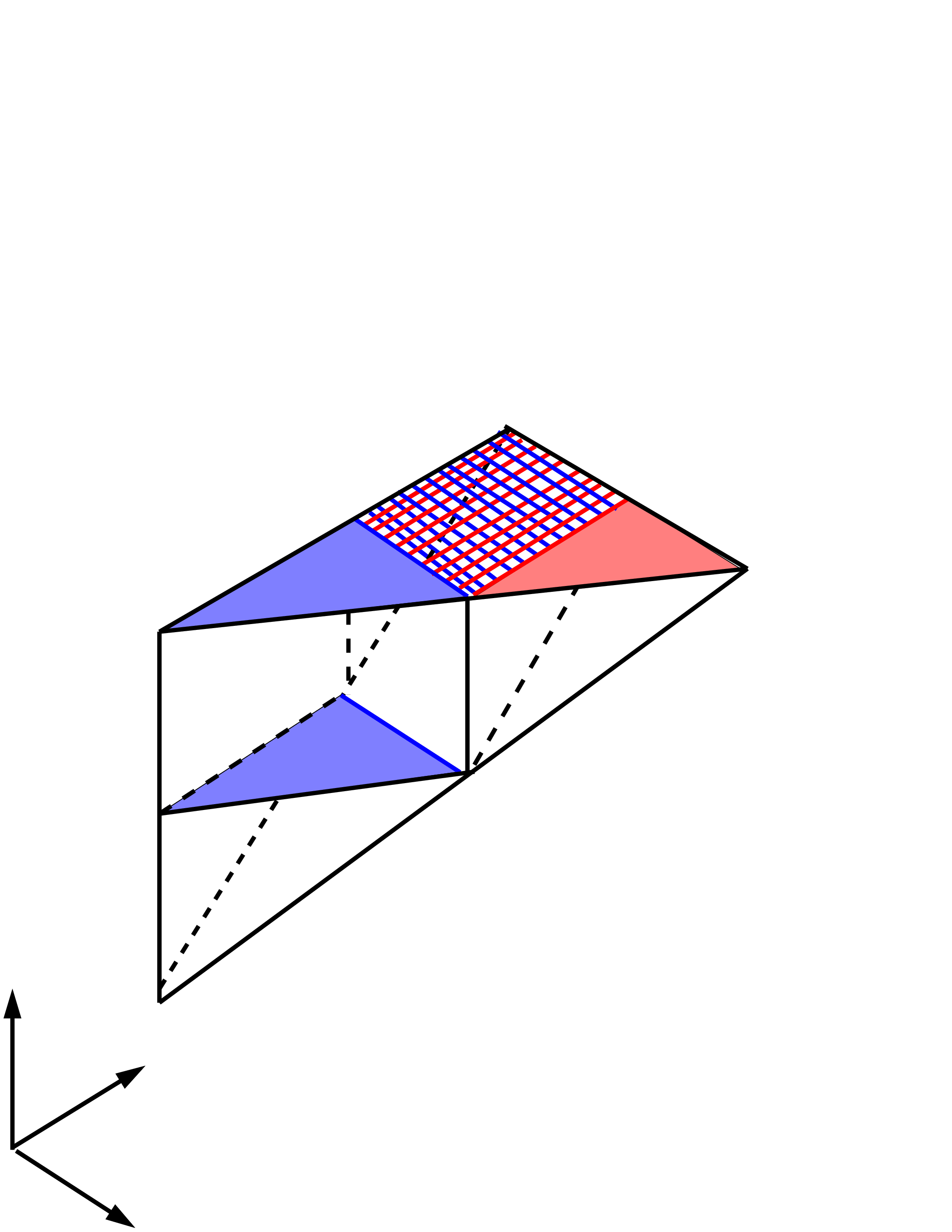}}
\put(100,0){$t_1$}
\put(102,46){$t_2$}
\put(60,70){$t_3$}
\end{picture}

\caption{A picture which partially illustrates how the projection $AM_n \to AM_{n-1}$ 
preserves the $\C_1$-action for $n=3, k=2$, with intervals $(L_-, L_+)=([-1,0],[0,1])$ acting on $\phi, \psi \in AM_n$.  
The product $(L_-,L_+)\cdot (\phi, \psi)$ is a map of the whole simplex.  
The restriction to each light-blue triangular face is (after rescaling in the domain and range, 
and after forgetting the last configuration point) the projection of $\phi$ to $AM_2$; 
the blue edge of this triangle is in turn the restriction to the face $t_2=1$.  
Similarly the restriction to the light-red triangular face is the projection of $\psi$ to $AM_2$; 
the red edge of this triangle is in turn the restriction to the face $t_1=0$.  
The map on the top face of the whole simplex is the projection of $(L_-, L_+)\cdot (\phi, \psi)$, 
which is indeed obtained as a product of the projections of $\phi$ and $\psi$ to $AM_2$.}

\end{figure}

\begin{proof}
We need to check that the following diagram commutes:
\[
\xymatrix{
\C_1(k) \x (AM^{fr}_n)^k \ar[r] \ar[d] & AM^{fr}_n \ar[d] \\
\C_1(k) \x (AM^{fr}_{n-1})^k \ar[r] & AM^{fr}_{n-1} 
}
\]
The composite through the upper-right corner is the map 
\[
(t_1,...,t_{n-1}) \,\, \longmapsto \,\, s_n \left( \bigcup_{i=1}^k \widehat{L_i} \circ \phi_i \circ L_i^{-1}(t_1,...,t_{n-1},1) \cup \bigcup \{ (t_j,0,0) \} \right)
\]
while the composite through the lower-left corner is the map 
\[
(t_1,...,t_{n-1}) \,\, \longmapsto \,\, \bigcup_{i=1}^k \widehat{L_i} \circ s_n \circ (\phi_i|_{d_n \Delta^{n-1}}) \circ L_i^{-1}(t_1,...,t_{n-1}) \cup \bigcup \{ (t_j,0,0) \}.
\]
These maps agree.
In either expression, each $\phi_i$ is applied to the same configuration, in particular with the same number of $t_i =1$.
As for the resulting configuration in $\R^3$, in the first expression one forgets an extra framed point at $(1,0,0)$.
In the second expression, one forgets an extra framed point at $(L_i(1),0,0)$ for each $i=1,...,k$, yielding the same result.
\end{proof}
A similar argument shows that the projection $AM_n \to AM_{n-1}$ is a map of $\C_1$-algebras.

\subsection{Maps in the tower through (only) projection}

For purposes of connecting our multiplications with those coming from cosimplicial structure,  we need  another model for the maps and 
layers in the totalization tower, one which is closely related to $p_n$  in that it uses projection.
While the map $p_n$ is defined through including $\P_\nu[n-1]$ in $\P_\nu[n]$, we now proceed by ``fibering $\P_\nu[n]$ over $\P_\nu[n-1]$.''
Consider the functor $i_n$ from $\P_\nu[n]$ to $\P_\nu[n-1]$ which modifies a subset $S$ by identifying $(n-1)$ and $n$ -- 
that is, changing  occurrences of $n$ in $S$ to $n-1$.

\begin{lemma}\label{AnotherTowerModel}
The homotopy limit of $X^\bullet \circ \mathcal{G}_{n-1} \circ i_n$ is homotopy equivalent to $\widetilde{Tot}^{n-1} X^\bullet$.\\

The $n$th degeneracy $s^n : X^n \to X^{n-1}$ extends to a map  
$$\widehat{s^n} : X^\bullet \circ \mathcal{G}_n \to X^\bullet \circ \mathcal{G}_{n-1} \circ i_n.$$  On homotopy limits, the map
induced by $\widehat{s^n}$ agrees with the standard map from 
$\widetilde{Tot}^n X^\bullet \to \widetilde{Tot}^{n-1} X^\bullet$.
\end{lemma}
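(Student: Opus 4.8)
The plan is to prove the two assertions of Lemma~\ref{AnotherTowerModel} separately but by the same underlying cofinality principle. For the first, I would show that the functor $\mathcal{G}_{n-1}\circ i_n : \P_\nu[n] \to \Delta_{n-1}$ is left cofinal; then $\holim (X^\bullet \circ \mathcal{G}_{n-1} \circ i_n)$ computes $\holim$ of the restriction of $X^\bullet$ to $\Delta_{n-1}$, which by Bousfield--Kan \cite{BK} is $\ttot^{n-1}X^\bullet$, exactly as in the discussion after Definition~\ref{AMmodels} for $\mathcal{G}_n$ itself. To get left cofinality of $\mathcal{G}_{n-1}\circ i_n$, I would factor it: $i_n : \P_\nu[n] \to \P_\nu[n-1]$ followed by $\mathcal{G}_{n-1} : \P_\nu[n-1] \to \Delta_{n-1}$. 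The latter is left cofinal by Theorem~6.7 of \cite{SinhaTop} (the statement that each $\mathcal{G}_m$ is left cofinal, cited in the excerpt). For $i_n$, I would check directly that for every $S \in \P_\nu[n-1]$ the under-category $S \downarrow i_n$ has contractible nerve: its objects are pairs $(T, S \subseteq i_n(T))$ with $T \in \P_\nu[n]$, i.e.\ subsets $T$ of $[n]$ whose image under ``collapse $n$ to $n-1$'' contains $S$. This poset has the initial-type structure one expects — it decomposes according to whether $n \in T$ and whether $n-1 \in T$ — and a routine check (a zig-zag of inclusions, or exhibiting a terminal or initial object after suitable restriction) shows the nerve is contractible. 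Composites of left cofinal functors are left cofinal, giving the first claim.

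For the second assertion, I would first make sense of $\widehat{s^n}$ as a natural transformation of functors $\P_\nu[n] \to \mathcal{T}op$. The point is that $\mathcal{G}_{n-1}\circ i_n$ and $\mathcal{G}_n$ agree on the subposet of $\P_\nu[n]$ consisting of subsets not containing $n$, or containing $n-1$ whenever they contain $n$; on the remaining subsets (those containing $n$ but not $n-1$), $\mathcal{G}_n$ gives a set of size $\#S$ while $\mathcal{G}_{n-1}\circ i_n$ gives one of size $\#S - 1$, and the gap is filled precisely by the codegeneracy $s^{n}$ (more precisely, by an appropriate codegeneracy $s^j$ determined by where $n$ sits in $S$). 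So for each $S$ I would define the $S$-component of $\widehat{s^n}$ to be either the identity of $X^{\#S-1}$ or the relevant codegeneracy $X^{\#S-1} \to X^{\#S-2}$, and then verify naturality: for a morphism $S \subseteq S'$ in $\P_\nu[n]$, the square relating the two functors commutes because of the cosimplicial identities among the $d^i$ and $s^j$ (the standard simplicial identities $s^j d^i = \ldots$). This is the bookkeeping-heavy step but is entirely formal.

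Finally, I would identify the induced map on homotopy limits with the standard tower map $\ttot^n X^\bullet \to \ttot^{n-1}X^\bullet$. Here I would use that the standard map is, up to the identifications already in place, induced by the inclusion $\Delta_{n-1}\hookrightarrow \Delta_n$ together with the equivalences $\holim(X^\bullet\circ\mathcal{G}_m)\simeq \ttot^m$; and $\widehat{s^n}$, post-composed with the cofinality equivalence of part one, realizes exactly the restriction-along-$\Delta_{n-1}\hookrightarrow\Delta_n$ map. Concretely: both $\mathcal{G}_n$ and $\mathcal{G}_{n-1}\circ i_n$ map into $\Delta_n$ (the latter landing in $\Delta_{n-1}\subseteq \Delta_n$), $\widehat{s^n}$ covers the identity on indexing categories after this observation, and the triangle of cofinal functors $\P_\nu[n] \to \Delta_{n-1} \to \Delta_n$ versus $\P_\nu[n]\to\Delta_n$ commutes up to the natural transformation whose components are the codegeneracies — which is precisely $\widehat{s^n}$. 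Chasing this through the Bousfield--Kan identification yields that the map on $\holim$ is the canonical $\ttot^n \to \ttot^{n-1}$.

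\medskip

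\noindent\emph{Main obstacle.} I expect the genuine work to be twofold. First, verifying left cofinality of $i_n$ — i.e.\ contractibility of the under-categories $S\downarrow i_n$ — requires care because the combinatorics of ``which subsets of $[n]$ collapse onto a given subset of $[n-1]$'' is a little delicate near the identified pair $\{n-1,n\}$; one wants a clean argument (e.g.\ a deformation retraction of the nerve, or an explicit cone point after restricting to a cofinal subposet) rather than a messy case analysis. Second, and more conceptually, pinning down that $\widehat{s^n}$ really does induce the \emph{standard} tower map, rather than merely \emph{a} map $\ttot^n \to \ttot^{n-1}$, means being precise about the several layers of identification (subcubical $\leftrightarrow$ cosimplicial via $\mathcal{G}$, homotopy-invariant totalization via \cite{BK}) and checking they are compatible — this is where one must resist hand-waving. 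Neither step is deep, but both are the kind of diagram-compatibility check that is easy to get subtly wrong.
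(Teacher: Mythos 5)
Your architecture matches the paper's: show $i_n$ is left cofinal, combine with the (already established) cofinality of $\mathcal{G}_{n-1}$ for the first statement, then identify $\widehat{s^n}$ with the standard restriction map for the second. However, there is a directional error in the cofinality check for $i_n$. To apply the Bousfield--Kan cofinality theorem \cite[XI.9.2]{BK} to homotopy \emph{limits}, you must verify that for each $S \in \P_\nu[n-1]$ the \emph{over}-category $i_n \downarrow S$, whose objects are those $T \in \P_\nu[n]$ with $i_n(T) \subseteq S$, has contractible nerve. You propose instead to check the under-category $S \downarrow i_n$, with objects $T$ satisfying $S \subseteq i_n(T)$; that is the criterion relevant to homotopy \emph{colimits}. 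In the present case both comma categories happen to be contractible, so you would not derive anything false, but the check you describe does not establish left cofinality of $i_n$. Once the direction is corrected, there is no ``messy case analysis'' to fear: $i_n\downarrow S$ has a terminal object, namely $S\cup\{n\}$ when $n-1\in S$ and $S$ itself otherwise (the paper writes $S\cup\{n\}$ without the case split; read it as ``the largest $T$ with $i_n(T)\subseteq S$''), so the nerve is a cone.

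For the second assertion, the paper's argument dissolves the ``main obstacle'' you anticipate. Introduce $j_n \co \P_\nu[n-1]\hookrightarrow\P_\nu[n]$ and observe $i_n\circ j_n=\id$. Restriction along $j_n$ from $\holim(X^\bullet\circ\mathcal{G}_n)$ lands in $\holim(X^\bullet\circ\mathcal{G}_{n-1})$ and literally \emph{is} the standard tower map, since $X^\bullet\circ\mathcal{G}_n\circ j_n=X^\bullet\circ\mathcal{G}_{n-1}$; restriction along $j_n$ from $\holim(X^\bullet\circ\mathcal{G}_{n-1}\circ i_n)$ is an equivalence, being a right inverse to the cofinality equivalence from part one. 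The triangle
\[
\xymatrix{
\holim (X^\bullet \circ \mathcal{G}_n) \ar[d]_{-\circ j_n} \ar[dr]^{\widehat{s^n}} & \\
\holim  (X^\bullet \circ \mathcal{G}_{n-1}) &  \holim (X^\bullet \circ \mathcal{G}_{n-1} \circ i_n) \ar[l]^{-\circ j_n}_{\simeq}
}
\]
commutes on the nose because, for every $S$ not containing $n$, the $S$-component of $\widehat{s^n}$ is the identity. This replaces the layered compatibility-chase through $\Delta_{n-1}\hookrightarrow\Delta_n$ with a single literal equality, and is the step where your sketch is most likely to go off the rails if executed as written.
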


\begin{proof}
For the first statement, it suffices to show that the functor $i_n$ is left cofinal \cite[XI.9.2]{BK},
so that the category $P_\nu [n] \times_{P_\nu [n-1]} (P_\nu [n -1] \downarrow S)$  has contractible 
nerve for every $S \in  P_\nu [n-1]$. But this category has a final object, namely the
object corresponding to $S \cup {n} \in P_\nu [n]$. 

It follows from the cosimplicial identities that $s^n$ induces a natural transformation of functors 
$\widehat{s^n} :  X^\bullet \circ \mathcal{G}_n \to X^\bullet \circ \mathcal{G}_{n-1} \circ i_n$, 
where the map on each object is given by either its last degeneracy or the identity map. 

Again, the standard map $\widetilde{Tot}^n X^\bullet \to \widetilde{Tot}^{n-1} X^\bullet$ is induced by the 
inclusion functor $j_n : \P_\nu [n-1] \to \P_\nu [n]$.  Note that the composite $i_n \circ j_n$ is the identity.   
Consider the following diagram to complete the proof.
\[
\xymatrix{
\holim (X^\bullet \circ \mathcal{G}_n) \ar[d]_-{-\circ j_n} \ar[dr]^-{\widehat{s^n}} & \\
\holim  (X^\bullet \circ \mathcal{G}_{n-1}) &  \holim (X \circ \mathcal{G}_{n-1} \circ i_n) \ar[l]^-{-\circ j_n}_-\simeq
}
\]
The left-hand vertical map is the map $\widetilde{Tot}^n X^\bullet \to \widetilde{Tot}^{n-1} X^\bullet$,
since $X^\bullet \circ \mathcal{G}_n \circ j_n = X^\bullet \circ \mathcal{G}_{n-1}$.  
Precomposing $X^\bullet \circ \mathcal{G}_{n-1} \circ i_n$ with $j_n$  induces the horizontal map because $i_n \circ j_n = id$.  
This horizontal map is an equivalence because on these homotopy limits, ``precomposing with $j_n$" is the 
right-inverse to the equivalence given by ``precomposing with $i_n$."
\end{proof}

By abuse, we let $\widehat{s^n}$ also denote the map it induces on homotopy limits.  For purposes
of distinction we let ${AM^{fr}_{n-1}}(\Delta^n)$ denote $C^{fr}_\bullet \la \In^3, \d \ra \circ \mathcal{G}_n \circ i_n$, which by the above is a model for $AM^{fr}_{n-1}$.  (The notation indicates that we model $AM_{n-1}^{fr}$ by maps of $\Delta^n$.)
The fact that  $\widehat{s^n} : AM^{fr}_n \to {AM^{fr}_{n-1}}(\Delta^n)$ and $p_n$ both model the 
structure maps in the totalization tower implies that they are homotopic.  
We choose $\widehat{s^n}$ for more detailed study of the fibers of these maps, starting with the observation
that $\widehat{s^n}$ is a fibration in both the $\AM^{fr}_n$ and $AM_n^{fr}$ settings, 
since the projection or identity maps which define it entry-wise are fibrations.
(See \cite[Lemma 3.5]{BCSS} for an explicit proof in this case of a standard result about enriched model structures on diagram categories in general.)

\begin{definition}
Let $L_n$ be the fiber of $\widehat{s^n} : AM^{fr}_n \to {AM^{fr}_{n-1}}(\Delta^n)$, based at the evaluation map of the unknot.  
That is, $L_n$ is the space of aligned maps where when one forgets the last point in each configuration in the image one obtains a standard configuration along the $x$-axis parametrized by the points in the
domain simplex.  
Let $\La_n$ be the fiber of  $\widehat{s^n} : \AM^{fr}_n \to {\AM^{fr}_{n-1}}(\Delta^n)$, 
which then sits over the constant map at the infinitesimal configuration where all $x_{ij}$ for $i < j$ are equal to $(1,0,0)$.

We will write $L_n(\Delta^n)$ or $\La_n(\Delta^n)$ if we want to emphasize the model we are using for this fiber.
\end{definition}

We used both the $AM_n$ and $\AM_n$ models in the previous section
since  since the former supports an evaluation map and a $\C_1$ structure and the latter has a commutative
multiplication.  We use both $L_n$ and $\La_n$ in similar fashion here, which means we also need a comparison.

\begin{proposition}\label{LayerLemma1}
The map $\iota : AM_n \to \AM_n$ restricts to a map from $L_n$ to $\La_n$ which is an equivalence and preserves multiplication up to homotopy.
\end{proposition}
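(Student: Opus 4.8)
The plan is to leverage Proposition~\ref{IotaHomotopyEquivalence}, which already establishes that $\iota : AM_n \to \AM_n$ is a homotopy equivalence, together with the fact that both $\widehat{s^n}$ maps are fibrations, so that $\iota$ restricts to a map of fibers. First I would check that $\iota$ is compatible with the restriction-projection (equivalently, with $\widehat{s^n}$): the map $\iota$ is built from $\tilde\varphi$ followed by the quotient to $\widetilde{C_n}\la \R^3,\partial\ra$, and forgetting the last configuration point commutes with $\tilde\varphi$ (by inspecting Definition~\ref{Extension}, since the point being forgotten is either one of the standard $x$-axis points or one of the $\varphi(\hat t_i)$ points, and in both cases this is unaffected by the ``focusing'' construction). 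Hence there is a square
\[
\xymatrix{
AM_n \ar[r]^-{\iota} \ar[d]_-{\widehat{s^n}} & \AM_n \ar[d]^-{\widehat{s^n}} \\
{AM_{n-1}}(\Delta^n) \ar[r]^-{\iota} & {\AM_{n-1}}(\Delta^n)
}
\]
commuting up to homotopy, where the bottom $\iota$ is the analogous equivalence at level $n-1$ (strictly speaking one should either check the square commutes on the nose or replace $\widehat{s^n}$ by a fibration and do a small homotopy-lifting argument; since $\widehat{s^n}$ is already a fibration in both models, the up-to-homotopy commuting square suffices to produce a map on homotopy fibers). Restricting to the fibers over the unknot's evaluation map gives the desired map $L_n \to \La_n$.

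Next I would argue the restricted map is an equivalence. The cleanest route is the five lemma applied to the long exact sequences (of homotopy groups, or of the fibration sequences) $L_n \to AM_n \to {AM_{n-1}}(\Delta^n)$ and $\La_n \to \AM_n \to {\AM_{n-1}}(\Delta^n)$: the middle vertical map is an equivalence by Proposition~\ref{IotaHomotopyEquivalence}, and the right vertical map is an equivalence by the same proposition applied at stage $n-1$ (using Lemma~\ref{AnotherTowerModel} to identify ${AM_{n-1}}(\Delta^n)$ and ${\AM_{n-1}}(\Delta^n)$ with $AM_{n-1}$ and $\AM_{n-1}$ up to equivalence). A mild subtlety is basepoint/connectedness bookkeeping in the long exact sequence — one wants the statement to be a genuine weak equivalence of spaces, not just an iso on homotopy groups based at one point — but this is handled because one can run the argument over each component, or because these are all infinite loop-space-like objects once the multiplicative structure is in hand; in any case the argument in \cite{BCSS} for the analogous layer comparison can be cited or mimicked.

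Finally, for the multiplicative statement, I would observe that $L_n$ inherits the $\C_1$-action on $AM_n$ (Corollary~\ref{CommutativeMultOnAM}) because $\widehat{s^n}$ is a map of $\C_1$-algebras — this is Proposition~\ref{ProjectionIsC1Map} together with the fact, noted right after Lemma~\ref{AnotherTowerModel}, that $\widehat{s^n}$ and $p_n$ are homotopic, so the fiber is a sub-$\C_1$-algebra up to homotopy (the basepoint, the unknot's evaluation map, is a unit). Likewise $\La_n$ inherits the $\C_1$-action/commutative multiplication on $\AM_n$. Then Proposition~\ref{CompatibleMultiplications} says $\iota$ intertwines these $\C_1$-actions up to homotopy, and restricting that homotopy-commuting square to fibers shows $\iota|_{L_n} : L_n \to \La_n$ preserves multiplication up to homotopy.

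The main obstacle I expect is not any single deep point but rather the care needed to make the fiber comparison rigorous: one must ensure the square relating the two $\widehat{s^n}$ fibrations genuinely commutes (or commutes up to a homotopy through fibrations) so that a well-defined map of homotopy fibers is produced, and one must correctly identify the two models ${AM_{n-1}}(\Delta^n)$ and ${\AM_{n-1}}(\Delta^n)$ for the base via Lemma~\ref{AnotherTowerModel}. Everything else — the five lemma, the inheritance of the $\C_1$-structure on fibers, and the compatibility of $\iota$ with multiplication — follows formally from results already established in the preceding sections.
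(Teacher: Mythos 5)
Your argument is essentially sound but takes a genuinely different route from the paper. The paper's own proof is terse and purely direct: it observes that the maps $q$, $i$, and the homotopy witnessing $q \simeq i \circ \iota$ from the proof of Proposition~\ref{IotaHomotopyEquivalence}, as well as the three-stage homotopy from Proposition~\ref{CompatibleMultiplications}, all manifestly preserve the defining condition of the layer (``forgetting the last configuration point yields the standard constant map''), and therefore restrict to the subspaces $L_n$, $\La_n$ and the corresponding hemisphere intermediate. That inspection immediately gives both the equivalence and the multiplicative compatibility without invoking the long exact sequence at all. Your approach instead produces the layer comparison formally, by exhibiting $\iota$ as a map of fibration sequences and feeding it into the five lemma. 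Both routes reach the conclusion, but they put the work in different places: the paper front-loads a concrete verification that specific homotopies respect a subspace condition, while your version front-loads the compatibility of $\iota$ with $\widehat{s^n}$ and with the model identification of Lemma~\ref{AnotherTowerModel}, and then invokes general machinery.

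Two things you should tighten. First, as you flag, the five lemma on the homotopy long exact sequence does not by itself produce a weak equivalence of possibly disconnected fibers — the tail of that sequence is in pointed sets, not groups, so ``iso on homotopy groups'' is not the right statement to aim for. The clean substitute is the standard fact that a homotopy-commutative square of fibrations in which the maps on total space and base are weak equivalences induces a weak equivalence on (homotopy) fibers over corresponding basepoints; this avoids the five lemma and the basepoint bookkeeping entirely, and is exactly the statement you want to cite rather than ``run the argument over each component.'' Appealing to a group structure on $\pi_0$ of the layer at this point would be risky, since that structure is precisely what these layer results are being used to set up. Second, your square comparing the two $\widehat{s^n}$'s uses $\iota$ on both the $n$th and $(n-1)$st level together with the identifications ${AM_{n-1}}(\Delta^n) \simeq AM_{n-1}$ of Lemma~\ref{AnotherTowerModel}; you should verify explicitly that $\iota$ is compatible with this reindexing, since it reparametrizes the domain simplex and the $h$-homeomorphism entering $\iota$ is applied in coordinates that get permuted by the reindexing. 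Neither point is a fatal gap, but both need to be stated carefully for the argument to go through, and this is where the paper's direct check of the condition ``standard configuration but for the last coordinate'' is noticeably cheaper.
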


\begin{proposition}\label{InclusionIsC1}
The inclusion of $L_n$ into $AM_n$ is a map of $\C_1$-spaces. 
\end{proposition}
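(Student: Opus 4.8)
The plan is to verify directly that the $\C_1$-action on $AM_n^{fr}$ restricts to the subspace $L_n \subset AM_n^{fr}$, so that the inclusion intertwines the two actions on the nose (not merely up to homotopy). Recall that $L_n$ consists of those aligned maps $\phi$ with the property that applying $\widehat{s^n}$ --- i.e.\ forgetting the last configuration point of each $\phi(\vec{t})$ --- yields the evaluation map of the unknot, the standard configuration strung along the $x$-axis. So the first step is to fix $\mathcal{L} = \bigcup L_i \in \C_1(k)$ and $\phi_1, \dots, \phi_k \in L_n$, and show $\mathcal{L}\cdot(\phi_1,\dots,\phi_k)$ again lies in $L_n$. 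By Definition~\ref{IntervalsActionAM}, $(\mathcal{L}\cdot(\phi_1,\dots,\phi_k))(\vec{t}\,)$ is the union of the configurations $\widehat{L_i}\circ\phi_i|_{L_i}(\vec{t}\,)$ together with points $(t_j,0,0)$ for the $t_j$ not in the interior of any $L_i$. Forgetting the last point of this configuration corresponds to forgetting the last point in whichever restricted piece it belongs to. The key observation is that the operation ``forget the last configuration point'' commutes with the restriction-and-shrink operations $\widehat{L_i}\circ(-)|_{L_i}$, since the restriction $\phi_i|_{L_i}$ forgets points by index (those $t_j$ not interior to $L_i$) in the same order-preserving way, and $\widehat{L_i}$ acts coordinatewise and fixes the $x$-axis setwise.

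Second, I would spell out this interchange. Concretely: if $\phi_i \in L_n$, then $s_n(\phi_i(\vec{s}\,))$ is the standard configuration for every $\vec{s}$; hence $\widehat{L_i}\circ s_n(\phi_i|_{L_i}(\vec{t}\,))$ is the standard configuration strung along $\widehat{L_i}$ applied to the $x$-axis, which is still the $x$-axis (reparametrized). Applying $\widehat{s^n}$ to the full product $\mathcal{L}\cdot(\phi_1,\dots,\phi_k)$ forgets the globally last point; that last point lies in the rightmost piece present at $\vec{t}$, and forgetting it there, then taking the union with the (unchanged, already-standard) remaining pieces and the $(t_j,0,0)$'s, yields precisely the standard configuration on the domain simplex. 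Thus $\widehat{s^n}(\mathcal{L}\cdot(\phi_1,\dots,\phi_k))$ is the evaluation map of the unknot, so the product lies in $L_n$. (One must recall from Definition~\ref{TowerMap} that $p_n$ uses the face $d_n:(t_1,\dots,t_{n-1})\mapsto(t_1,\dots,t_{n-1},1)$ and the degeneracy $s_n$ forgetting the last point, and that $\widehat{s^n}$ is homotopic to $p_n$; here it is cleanest to work with the $\widehat{s^n}$ description of $L_n$ as in the Definition preceding this proposition, where ``forget the last point'' is literally the defining map.) Since the operad structure maps (substitution, unit) on $AM_n^{fr}$ are defined on all of $AM_n^{fr}$, their restriction to $L_n$ automatically satisfies the operad axioms, so $L_n$ is a sub-$\C_1$-algebra and the inclusion is a map of $\C_1$-spaces.

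The main obstacle, such as it is, will be bookkeeping the index conventions: making sure that ``the last configuration point of the product'' corresponds, under the piecewise description of the action, to ``the last configuration point of the last active $\phi_i$ (or the last trivial $x$-axis point),'' and that this matches the forgetting prescribed by $\widehat{s^n}$ on each piece of the polyhedral decomposition of $\Delta^n$. This is genuinely routine but requires care because the restriction maps $\phi_i|_{L_i}$ already forget a variable number of points, so one should check that the combined forgetting still forgets exactly the point indexed $n$ in the ambient configuration of $m+n-1$-type size --- here one invokes the compatibility of codegeneracies with the insertion maps $\circ_i$, exactly the cosimplicial identities used in Lemma~\ref{AnotherTowerModel}. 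I expect no homotopies are needed at all: the statement should hold strictly, which is why the proposition is stated without an ``up to homotopy'' qualifier, in contrast to Proposition~\ref{LayerLemma1}.
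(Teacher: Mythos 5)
Your proposal is correct and takes the same approach the paper intends, namely verifying directly from Definition~\ref{IntervalsActionAM} that the little-intervals action preserves the subspace $L_n$, so the inclusion is strictly a sub-$\C_1$-algebra (the paper itself gives no details beyond saying this is ``straightforward from the definitions''). One small clean-up: the invocation of the insertion maps $\circ_i$ and Lemma~\ref{AnotherTowerModel} at the end is a red herring here, since the action on $AM_n^{fr}$ is defined directly via unions, restrictions $\phi_i|_{L_i}$, and shrinkings $\widehat{L_i}$ rather than via $\circ_i$; the decisive point you should make explicit is that, because the $L_i$ have disjoint interiors, $t_n$ lies in the interior of at most one $L_i$, so every restriction $\phi_{i'}|_{L_{i'}}(\vec t)$ with $i'\neq i$ has already forgotten the index-$n$ point of $\phi_{i'}$ and is therefore standard, while the one remaining potentially nonstandard point is exactly the one that $\widehat{s^n}$ removes.
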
 

The proofs of all of these are straightforward from the definitions, checking that previous definitions arguments such as 
Proposition~\ref{CompatibleMultiplications}
are compatible with the
condition of being a standard configuration but for the last coordinate.  

We can use the equivalence ${AM_{n-1}^{fr}}(\Delta^n) \to  AM^{fr}_{n-1}$ defined by restriction to the last face of $\Delta^n$
to define a $\C_1$-structure on ${AM_{n-1}^{fr}}(\Delta^n)$.  Combining 
Proposition~\ref{ProjectionIsC1Map} and Proposition~\ref{InclusionIsC1} and composing with the homotopy inverse 
$AM^{fr}_{n-1} \to {AM_{n-1}^{fr}}(\Delta^n)$, which is then also $\C_1$,  gives the following.

\begin{corollary}\label{C1Fibration}
$L_n \to AM_n^{fr} \to {AM_{n-1}^{fr}}(\Delta^n)$ is a fibration sequence of $\C_1$ spaces.
\end{corollary}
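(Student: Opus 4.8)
The plan is to assemble the fibration sequence of $\C_1$-spaces from the three pieces already in hand, and then to transport the $\C_1$-structure across the equivalence relating the two models of the base. First I would recall that Corollary~\ref{C1Fibration} asks for three things: that the maps $L_n \to AM_n^{fr} \to {AM_{n-1}^{fr}}(\Delta^n)$ form a fibration sequence (as spaces), that each of these spaces carries a $\C_1$-action, and that both maps are maps of $\C_1$-spaces. The first point is essentially definitional: $L_n$ was defined as the fiber of $\widehat{s^n}: AM_n^{fr} \to {AM_{n-1}^{fr}}(\Delta^n)$, and just before Definition of $L_n$ it was noted that $\widehat{s^n}$ is a fibration (entrywise it is a projection or identity, hence a fibration, and homotopy limits of fibrations between diagrams are fibrations by \cite[Lemma 3.5]{BCSS}). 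So the sequence is a fibration sequence on the nose.

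Next I would address the $\C_1$-actions. The space $AM_n^{fr}$ carries the $\C_1$-action of Proposition~\ref{ProjectionIsC1Map} (via Definition~\ref{IntervalsActionAM}), and $L_n$ inherits a $\C_1$-action from Proposition~\ref{InclusionIsC1}, which asserts that $L_n \incl AM_n^{fr}$ is a map of $\C_1$-spaces --- in particular the action on $AM_n^{fr}$ restricts to $L_n$ since the condition defining $L_n$ (standard configuration along the $x$-axis after forgetting the last point) is preserved by the little-intervals action, as the action shrinks and places configurations in succession, acting by the identity on points forced to lie on the $x$-axis. For the base, the subtlety is that the natural $\C_1$-structure lives on $AM_{n-1}^{fr}$, not on the model ${AM_{n-1}^{fr}}(\Delta^n)$. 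So I would invoke the equivalence ${AM_{n-1}^{fr}}(\Delta^n) \to AM_{n-1}^{fr}$ given by restriction to the last face $\Delta^{n-1} = d_n\Delta^{n-1} \subset \Delta^n$, transport the $\C_1$-action of Proposition~\ref{ProjectionIsC1Map} along this equivalence (and a chosen homotopy inverse), and thereby equip ${AM_{n-1}^{fr}}(\Delta^n)$ with a $\C_1$-structure for which this restriction map is a map of $\C_1$-spaces.

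It then remains to check that $\widehat{s^n}: AM_n^{fr} \to {AM_{n-1}^{fr}}(\Delta^n)$ is a map of $\C_1$-spaces with respect to this transported structure. Here I would use two facts established earlier: Proposition~\ref{ProjectionIsC1Map} says the restriction-projection $p_n: AM_n^{fr} \to AM_{n-1}^{fr}$ is a map of $\C_1$-algebras, and the discussion following Lemma~\ref{AnotherTowerModel} says that $\widehat{s^n}$ and $p_n$ both model the structure maps in the totalization tower, hence are homotopic. Composing: the composite $AM_n^{fr} \xrightarrow{\widehat{s^n}} {AM_{n-1}^{fr}}(\Delta^n) \to AM_{n-1}^{fr}$ is homotopic to $p_n$, which is $\C_1$-equivariant, so up to homotopy $\widehat{s^n}$ followed by the (equivariant) equivalence is equivariant, and composing with the equivariant homotopy inverse $AM_{n-1}^{fr} \to {AM_{n-1}^{fr}}(\Delta^n)$ shows $\widehat{s^n}$ itself is equivariant up to homotopy. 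Combining this with the $\C_1$-map $L_n \incl AM_n^{fr}$ from Proposition~\ref{InclusionIsC1} completes the argument. The main obstacle --- really the only non-formal point --- is verifying that the restriction-to-last-face equivalence ${AM_{n-1}^{fr}}(\Delta^n) \to AM_{n-1}^{fr}$ is genuinely compatible with the $\C_1$-actions, i.e.\ that restricting an aligned map on $\Delta^n$ (in the image of the $i_n$-pulled-back diagram) to its last face intertwines the little-intervals action defined via $i_n$ with the one on $AM_{n-1}^{fr}$; this is the kind of check alluded to in the sentence ``The proofs of all of these are straightforward from the definitions,'' and amounts to unwinding that the action in both cases is ``shrink and place in succession, with standard points elsewhere,'' which commutes with forgetting/restricting the last configuration point because the last point plays no special role in the little-intervals action itself.
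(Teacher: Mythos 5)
Your proposal is essentially the paper's own argument, and it is correct. Both you and the paper proceed by (i) noting $\widehat{s^n}$ is a fibration entrywise (so $L_n$ is its actual fiber), (ii) equipping $L_n$ with the $\C_1$-action restricted from $AM_n^{fr}$ via Proposition~\ref{InclusionIsC1}, (iii) transporting the $\C_1$-action on $AM_{n-1}^{fr}$ across the restriction-to-last-face equivalence to get a $\C_1$-structure on ${AM_{n-1}^{fr}}(\Delta^n)$, and (iv) using Proposition~\ref{ProjectionIsC1Map} to identify the projection as a $\C_1$-map up to homotopy, by comparison with $p_n$.

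The one place you diverge, and slightly over-complicate, is your closing ``main obstacle'': verifying that the restriction-to-last-face map intertwines a $\C_1$-action ``defined via $i_n$'' with the one on $AM_{n-1}^{fr}$. In the paper there is no such check to do, because the $\C_1$-structure on ${AM_{n-1}^{fr}}(\Delta^n)$ is \emph{defined} to be the transported one --- there is no independently defined intrinsic action on that model whose agreement with the transported one would need verification. Once you take the transported action as the definition, the restriction map is a $\C_1$-map by fiat, and the only remaining non-formal content is Proposition~\ref{ProjectionIsC1Map} (already done) and Proposition~\ref{InclusionIsC1}, whose proof you correctly sketch: the little-intervals action on $AM_n^{fr}$ acts as the identity on the standard points along the $x$-axis, so it preserves the condition cutting out $L_n$. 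With that adjustment your argument matches the paper's.
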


\subsection{The layers of a totalization tower via cubical diagrams} \label{CosimplicialCubical}

Following Goodwillie, we use cubical diagrams to  develop loop structures on layers in the totalization tower of a cosimplicial space.  
Some of this material is treated  in the forthcoming volume \cite{MunsonVolicCubes}.  We have already seen $\P_\nu[n]$
and $\P_\nu[n-1]$ related by inclusion and ``fibering''.  We next relate them through a  Mayer-Vietoris decomposition.

Just as the standard map ${\ttot}^n X^\bullet \to {\ttot}^{n-1} X^\bullet$ can be defined by an inclusion  
$\Delta_{n-1} \subset \Delta_n$, it can also be defined by the canonical inclusion 
$\P_\nu[n-1] \subset \P_\nu[n]$ through the equivalence given by the functor $\mathcal{G}_n$.
We will analyze the fibers of these maps -- that is, the layers in the totalization tower -- in two different ways this section.  
First we use a sort of Mayer--Vietoris decomposition of  the category $\P_\nu[n]$.

\begin{definition}
\begin{itemize}
\item Let $\P_{\neq n}  \subset \P_\nu[n]$ be the full subcategory given by all nonempty subsets of $[n]$ except the singleton $\{n\}$. \\
\item Let $\P_{n \in}$ be the (cubical) poset of all subsets of $[n]$ containing $n$.  \\
\item Let $\P_{n+}$ be the (subcubical) poset of subsets of $[n]$ containing $n$ and at least one other element.
\end{itemize}
\end{definition}

The inclusion $\iota: \P_\nu[n-1] \incl \P_{\neq n} $ is left cofinal, so the map induced on homotopy limits is an equivalence.
We can thus replace  $\holim_{ \P_\nu[n]} X^\bullet \circ \G_n \to \holim_{ \P_\nu[n-1]} X^\bullet \circ \G_n$ by an
alternate model for the maps in the tower, namely
\begin{align*}
\holim_{\P_\nu[n]} X^\bullet \circ \G_n \to \holim_{\P_{\neq n}} X^\bullet \circ \G_n. 
\end{align*}

The poset $\P_\nu[n]$ can be written as the union of $\P_{\neq n}$ and $\P_{n\in}$ along  $\P_{n+}$, yielding the following square:
\begin{align}
\label{SquareOfPosets}
\xymatrix{
  \P_\nu[n] & \P_{n\in} \ar@{_(->}[l] \\
  \P_{\neq n} \ar@{^(->}[u] & \P_{n+} \ar@{_(->}[l] \ar@{^(->}[u]
  }
\end{align}
Applying $\holim_{(-)} X^\bullet \circ \G_n$ to the diagram above, we get a pullback square of fibrations \cite[Proposition 0.2]{GwCalc2}. 
Thus, to study the fiber(s) of the map from $\widetilde{{\rm Tot}}^n$ to $\widetilde{{\rm Tot}}^{n-1}$, which up to homotopy is the left-hand column of the induced map of homotopy limits of this square, it suffices to study the right-hand column.  We say fiber(s) because in our application we study unbased and sometimes
disconnected spaces.  

Since $\P_{n+}$ is just the cube $\P_{n\in}$ with its initial object removed, the map on homotopy limits induced by the
right vertical arrow is 
just the map from the initial object, at $\{ n \}$, to the homotopy limit of the rest of the diagram, which is subcubical.  
We conclude that the fiber(s) are  the total fiber(s) of the cube $\P_{n\in}$, over different possible basepoints in the unbased cased.  
 In the based setting the original map is $k$-connected if and only if the $n$-cube $X^\bullet \circ \mathcal{G}_n|_{\P_{n\in}}$ 
is $k$-cartesian. 

\subsection{Loop structure on layers via retracts of cubes}

\begin{lemma}\label{RetractOfCubes}
Let $f: C_\bullet \to D_\bullet$ be a map of cubical diagrams, and let $r: D_\bullet \to C_\bullet$ be a retraction object-wise.
Then the total fiber of the cube $\left(C_\bullet \to D_\bullet\right)$ is loops on the total fiber of $\left(D_\bullet \to C_\bullet\right)$.
\end{lemma}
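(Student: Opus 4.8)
The plan is to work with the $(n{+}1)$-dimensional cube obtained by combining $f$ and $r$. Let me think of $C_\bullet$ and $D_\bullet$ as $n$-cubical diagrams, i.e. functors from $\mathcal{P}(\underline{n})$, the poset of subsets of $\{1,\dots,n\}$. The map $f: C_\bullet \to D_\bullet$ assembles into an $(n{+}1)$-cube $\mathcal{X}$, and likewise $r: D_\bullet \to C_\bullet$ assembles into an $(n{+}1)$-cube $\mathcal{Y}$. The total fiber of an $(n{+}1)$-cube can be computed iteratively: it is the homotopy fiber of the map from the ``initial vertex'' to the homotopy limit of everything else, and equivalently one may first take total fibers in the $C$- and $D$-directions separately and then take the homotopy fiber of the resulting map of spaces. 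So $\tfib(\mathcal{X}) = \hofib\big(\tfib(C_\bullet) \to \tfib(D_\bullet)\big)$, where the map is induced by $f$, and $\tfib(\mathcal{Y}) = \hofib\big(\tfib(D_\bullet) \to \tfib(C_\bullet)\big)$, induced by $r$.

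First I would record that $f$ and $r$ induce maps on total fibers $\bar f: \tfib(C_\bullet) \to \tfib(D_\bullet)$ and $\bar r: \tfib(D_\bullet) \to \tfib(C_\bullet)$, with $\bar r \circ \bar f = \mathrm{id}$ since $r \circ f = \mathrm{id}$ object-wise and total fiber is functorial. So we have a retraction of spaces $\bar f: A \rightleftarrows B : \bar r$ with $\bar r \bar f = \mathrm{id}_A$, writing $A = \tfib(C_\bullet)$, $B = \tfib(D_\bullet)$. Then the claim reduces to the elementary fact that for such a retract, $\hofib(\bar f) \simeq \Omega\, \hofib(\bar r)$. I would prove this via the fiber sequences: $\hofib(\bar f) \to A \xrightarrow{\bar f} B$ gives, on looping, $\Omega A \to \Omega B \to \hofib(\bar f)$ up to the usual shift; meanwhile $\hofib(\bar r) \to B \xrightarrow{\bar r} A$, and since $\bar r$ is a retraction (in particular a split surjection up to homotopy with section $\bar f$), $B \simeq A \times \hofib(\bar r)$ compatibly, so $\hofib(\bar f) = \hofib(A \to A \times \hofib(\bar r)) \simeq \Omega\,\hofib(\bar r)$. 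Concretely: $\bar f$ followed by projection to the ``$\hofib(\bar r)$ factor'' is null (since $\bar r \bar f = \mathrm{id}$ means $\bar f$ lands in the graph of a section), and chasing this through the long exact sequence, or through the equivalence $B \simeq A \times \hofib(\bar r)$ over $A$, identifies $\hofib(\bar f)$ with the fiber of $A \hookrightarrow A \times \hofib(\bar r)$, which is $\Omega\,\hofib(\bar r)$.

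The one technical point to handle carefully is the unbased/disconnected setting, since elsewhere in the paper the authors emphasize working with total fibers over varying basepoints. So I would phrase everything fiberwise over $A$: the retraction exhibits $B \to A$ as a fibration (up to homotopy) with a section $\bar f$, and the homotopy fiber of $\bar f$ over a point in $B$ is the homotopy fiber of the inclusion of the section into the total space of a fibration over $A$, which is the loop space of the fiber $\hofib(\bar r)$ based at that point. This is clean enough that no genuine obstacle arises; the main thing to get right is the bookkeeping identifying $\tfib$ of an $(n{+}1)$-cube with the iterated homotopy fiber of total fibers in two orthogonal directions, which is standard cubical calculus (as in the references to \cite{GwCalc2} and \cite{MunsonVolicCubes} already invoked in this section). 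I expect that step — setting up the $(n{+}1)$-cube and justifying the interchange of total-fiber operations — to be the main (though routine) obstacle, with the retract-implies-loops argument being the genuinely substantive but short heart of the proof.
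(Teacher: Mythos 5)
Your overall approach matches the paper's: reduce to the square of total fibers
\[
\xymatrix{
A \ar[r]^{\bar f} \ar[d]_{\mathrm{id}} & B \ar[d]^{\bar r} \\
A \ar[r]^{\mathrm{id}} & A
}
\]
with $A = \tfib(C_\bullet)$, $B = \tfib(D_\bullet)$, and identify its total fiber two ways. (The paper phrases this as computing the total fiber of the $(n+2)$-cube first ``internally'' and then horizontally/vertically or vertically/horizontally.)

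The one step that fails as written is the assertion that a split fibration $\bar r\co B \to A$ with section $\bar f$ must satisfy $B \simeq A \times \hofib(\bar r)$. A section does not trivialize a fibration. For instance, with $A = S^1$, $B = S^1 \vee S^2$, $\bar r$ collapsing $S^2$ and $\bar f$ the inclusion, one has $\hofib(\bar r) \simeq \bigvee_{\Z} S^2$, and $S^1 \vee S^2 \not\simeq S^1 \times \bigvee_{\Z} S^2$ (compare $H_2$). So the ``chasing through $B \simeq A \times \hofib(\bar r)$'' route is not available. However, your final paragraph contains a correct repair: for a fibration $p\co B \to A$ with section $s$, the square above, read vertical-then-horizontal, gives $\hofib(s) \simeq \hofib\bigl(* \to \hofib(p)\bigr) \simeq \Omega\,\hofib(p)$; equivalently, the homotopy fiber of the inclusion of a section into the total space is the loop space of the fiber. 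This is exactly the interchange of iterated fibers that the paper invokes, and it needs no product decomposition. I would simply delete the sentence asserting $B \simeq A \times \hofib(\bar r)$ and argue via the square interchange (or the section-of-a-fibration fact) directly, as you do at the end.
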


\begin{proof}
Consider the square of cubes, which is itself a cubical diagram:
\[ 
\xymatrix{
C_\bullet \ar[r]^-{f} \ar[d]_-{id} & D_\bullet \ar[d]^-{r}\\
C_\bullet \ar[r]^-{id} & C_\bullet
}
\]
We find the total fiber of this cubes in two ways, first internally to the $C_\bullet$ and $D_\bullet$ subcubes followed by taking horizontal then vertical
fibers.  This yields the total fiber of $\left(C_\bullet \to D_\bullet\right)$.  If we first take internal fibers, then vertical, then horizontal we see 
loops on the total fiber of $\left(D_\bullet \to C_\bullet\right)$.
\end{proof}

Cosimplicial identities imply that the codegeneracy maps of a cosimplicial space can be used to define retractions of cubes.  
For example, at the first two levels of a cosimplicial space the codegeneracy
map is a retract for either coface map.  Since the first layer in the Tot tower is the total fiber of $\P_{1 \in}$, which is just a coface map $X^0 \to X^1$,
this lemma shows that is loops on the fiber of the codegeneracy $X^1 \to X^0$.  More generally we have the following.

\begin{definition}
\begin{itemize}
\item For an inclusion of ordered sets $i : S \hookrightarrow S'$, we define the {dual surjection} $i^! : S' \to S$
to be the order-preserving retraction which sends each element of $S$ to the maximal value of $S'$ possible among such retractions.

\item Let $\P_{n\in}^!$ be the category whose objects are subsets of $[n]$ containing $n$ and where morphisms are all the dual surjections.

\item  For brevity, let ${\mathcal{G}}_n^{!} : \P_{n\in}^! \to  \bf{\Delta}_{n}$ denote the functor ${\mathcal{G}}_{\P_{n\in}^!}$ (defined in Definition~\ref{GFunctor}).

\end{itemize}
\end{definition}

\begin{proposition} \label{CubicalFibers}
For a cosimplicial space $X^\bullet$, the homotopy limit of $X^\bullet \circ \mathcal{G}_n|_{\P_{n\in}}$ and thus the fiber of
${\ttot}^n X^\bullet \to {\ttot}^{n-1} X^\bullet$ is homotopy equivalent to $\Omega^n \holim X^\bullet \circ \mathcal{G}_n^!$.
\end{proposition}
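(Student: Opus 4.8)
The plan is to iterate Lemma~\ref{RetractOfCubes} $n$ times, peeling off one loop per application, starting from the cube $X^\bullet\circ\mathcal{G}_n|_{\P_{n\in}}$ and ending at the ``constant'' $0$-cube $X^\bullet\circ\mathcal{G}_n^!$ over an object with homotopy limit $\holim X^\bullet\circ\mathcal{G}_n^!$. First I would set up the combinatorial bookkeeping: $\P_{n\in}$ is the $n$-cube of subsets of $[n]$ containing $n$, and a codimension-one face of this cube is obtained by fixing whether some fixed $j\in[n-1]$ is in the subset or not. The key observation, forced by the cosimplicial identities, is that the subface with $j$ \emph{absent} retracts onto the subface with $j$ \emph{present} (equivalently vice versa) via the codegeneracy map $s^{j-1}$ (suitably indexed through $\mathcal{G}_n$), because inserting $j$ and then deleting it is the identity. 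Thus for each coordinate direction of the cube, the two opposite facial $(n-1)$-subcubes fit the hypothesis of Lemma~\ref{RetractOfCubes}: there is a map $f$ (induced by the coface, i.e.\ ``forget $j\in S$'') from the ``$j$-absent'' subcube to the ``$j$-present'' subcube, together with an object-wise retraction $r$ (the codegeneracy).

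The core induction then runs as follows. Lemma~\ref{RetractOfCubes} says the total fiber of the $n$-cube $\P_{n\in}$ — viewed as the cube of the map (``$j$-absent'' subcube $\to$ ``$j$-present'' subcube) along the $j$-th direction — is $\Omega$ of the total fiber of the reversed cube (``$j$-present'' subcube $\to$ ``$j$-absent'' subcube). But the ``$j$-present'' subcube, with its morphisms now being the \emph{dual surjections} coming from the retraction direction, is again an $(n-1)$-cube of the same shape: it is indexed by subsets of $[n]$ containing both $n$ and $j$, which after relabeling is a copy of the $(n-1)$-cube $\P_{(n-1)\in}$-type poset but with dual-surjection morphisms. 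So after one application we have $\Omega$ of the total fiber of an $(n-1)$-cube of the right form; applying the lemma again (choosing another coordinate $j'\in[n-1]\setminus\{j\}$) peels off another loop, and so on. After $n$ steps all coordinates of $[n-1]$ are consumed, the cube has collapsed to a single object (the subset $[n]$ itself, or rather the object of $\P_{n\in}^!$ to which everything retracts), and the total fiber of that $0$-cube is just its value, i.e.\ the homotopy limit of the residual diagram, which is $X^\bullet\circ\mathcal{G}_n^!$ evaluated appropriately. Composing the $n$ loop shifts gives $\Omega^n\holim X^\bullet\circ\mathcal{G}_n^!$. Finally, the identification of the left-hand side with the fiber of $\ttot^n X^\bullet\to\ttot^{n-1}X^\bullet$ is exactly the conclusion of Section~\ref{CosimplicialCubical}, specifically that the total fiber of $\P_{n\in}$ is the fiber of that tower map.

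The main obstacle I expect is not the single inductive step — that is essentially a direct invocation of Lemma~\ref{RetractOfCubes} — but verifying carefully that the pieces are compatible across all $n$ iterations: namely that after peeling off one loop the residual $(n-1)$-cube really is of the form to which the lemma applies again, with morphisms that are honestly dual surjections, and that the retractions used at different stages are mutually consistent (they are, by the cosimplicial identities $s^i s^j = s^{j-1} s^i$ for $i<j$, but this needs to be tracked through the functor $\mathcal{G}_n$ and the relabelings). In other words, the bulk of the work is organizing an induction whose statement must be strong enough to carry through — something like: ``for every down-closed sub-poset of $\P_{n\in}^!$ of cube-dimension $k$, the total fiber of the corresponding $k$-cube $X^\bullet\circ\mathcal{G}_n$ is $\Omega^k$ of the homotopy limit of $X^\bullet\circ\mathcal{G}_n$ restricted to the dual-surjection poset on the maximal objects.'' I would state and prove exactly that by downward induction on the number of remaining coordinate directions, with Lemma~\ref{RetractOfCubes} as the inductive step and the $k=0$ case being the tautology ``total fiber of a $0$-cube is its value.''
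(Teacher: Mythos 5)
Your proposal correctly locates the key tool (Lemma~\ref{RetractOfCubes}), and correctly identifies that the cosimplicial codegeneracies supply object-wise retractions for each of the $n$ coordinate directions of the cube $\P_{n\in}$. The high-level plan of applying the lemma $n$ times, once per coordinate, to extract $\Omega^n$ is also the plan the paper follows.

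However, there is a genuine error in how you track what happens after one application of the lemma. You claim that after peeling off one loop you are left with ``$\Omega$ of the total fiber of an $(n-1)$-cube,'' and that after $n$ applications ``the cube has collapsed to a single object,'' so that the final answer is the total fiber of a $0$-cube. This is not what Lemma~\ref{RetractOfCubes} produces. If $C_\bullet$ and $D_\bullet$ are $(n-1)$-cubes and $C_\bullet\to D_\bullet$ admits an object-wise retraction $r\colon D_\bullet\to C_\bullet$, the lemma identifies $\mathrm{tfib}(C_\bullet\to D_\bullet)$ with $\Omega\,\mathrm{tfib}(D_\bullet\to C_\bullet)$, and $(D_\bullet\to C_\bullet)$ is still an $n$-cube; nothing has been discarded. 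In particular you cannot forget $C_\bullet$ after reversal --- the total fiber of the reversed cube depends on both subcubes, and is not the total fiber of the $(n-1)$-cube $D_\bullet$ alone. Your auxiliary claim that ``the $j$-present subcube, with its morphisms now being the dual surjections,'' also slips: the internal morphisms of $D_\bullet$ (the $(n-1)$ unconsumed directions) are unchanged; only the cross-direction map from $D_\bullet$ to $C_\bullet$ has become a codegeneracy. And even setting those aside, a $0$-cube's total fiber is a single space $X^k$, which cannot equal the claimed $\holim X^\bullet\circ\mathcal{G}_n^!$ (compare $n=1$, where the answer is $\mathrm{fib}(s^0\colon X^1\to X^0)$, not a bare $X^k$).

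The fix is exactly the interpolating family the paper constructs: the posets $\P_{n\in}(j)$ for $j=0,\dots,n$, all with the same object set, but where the direction-$i$ morphism is the coface $S\subset S\cup\{i\}$ for $i$ on one side of $j$ and the codegeneracy (dual surjection) for $i$ on the other side. Each of these is an honest $n$-cube, and one passes from $\P_{n\in}(j{+}1)$ to $\P_{n\in}(j)$ by viewing the former as $(C_\bullet\to D_\bullet)$ along the $j$-th coordinate and the latter as the reversed cube $(D_\bullet\to C_\bullet)$. Lemma~\ref{RetractOfCubes} gives one loop at each step, and after $n$ steps you land on $\P_{n\in}^!$ --- still an $n$-cube, now with all codegeneracy morphisms --- whose total fiber is the target of the $\Omega^n$. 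So the dimension of the cube never drops; what changes, one coordinate at a time, is whether that direction's morphisms are cofaces or codegeneracies. The compatibility you worry about in your last paragraph (whether the retractions from different stages interact coherently) is in fact unproblematic precisely because the construction never restricts to a subcube: each step involves a single coordinate direction and the subcubes $C_\bullet, D_\bullet$ are allowed to carry a mix of coface and codegeneracy morphisms, about which Lemma~\ref{RetractOfCubes} is agnostic.
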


The proof of this proposition in \cite[Proposition 9.4.10]{MunsonVolicCubes} is essentially correct, though it makes reference to a diagram which 
is generally not possible to construct for an arbitrary cosimplicial space and in particular for those we use.  We only use a sub-diagram of that
used \cite{MunsonVolicCubes} which can always be constructed.

\begin{proof}
We interpolate between $\mathcal{G}_{\P_{n\in}}$ and $\mathcal{G}_{\P_{n\in}^!}$, which share the underlying objects, namely subsets 
of $[n]$ which contain $n$.  But $\P_{n\in}$ has basic morphisms (of which all others are composites) of $S \mapsto S \cup i$, while 
$\P_{n\in}^!$ has basic morphisms with some $i, i + 1 \mapsto i+1$ and identity otherwise.  
For $j=0,1,...,n$, define $\P_{n\in}(j)$ as having this same set of objects but with generating morphisms $S \subset S \cup i$ for $i \leq j$ and 
$i, i + 1 \mapsto i+1$ and identity otherwise for $i > j$.

We view $X^\bullet \circ {\mathcal{G}}_{\P_{n \in}(j+1)}$ as a map of cubes 
$C_\bullet \to D_\bullet$ where $C_\bullet$ is the restriction to subsets which do not contain $j$,
 $D_\bullet$ is the restriction to those which do, and the map between cubes is defined by all of the $S \mapsto S \cup j$ maps.  
 Then $X^\bullet \circ {\mathcal{G}}_{\P_{n \in}(j)}$ is a retract 
 $D_\bullet \to C_\bullet$, with morphisms defined by sending $j$ to the next element in the ordering.
We deduce from Lemma~\ref{RetractOfCubes} that the total fiber of $X^\bullet \circ \mathcal{G}_{\P_{n \in}(j+1)}$ is 
loops on the total fiber of $X^\bullet \circ \mathcal{G}_{\P_{n \in}(j)}$.  Thus 
${\rm fib} X^\bullet \circ \mathcal{G}_{\P_{n \in}(n)} \simeq \Omega^n {\rm fib}  X^\bullet \circ \mathcal{G}_{\P_{n \in}(0)}$.
Since $ \mathcal{G}_n|_{\P_{n\in}}$ is $\mathcal{G}_{\P_{n\in}(n)}$ while $\mathcal{G}_{\P_{n\in}^!}$ is ${\mathcal{G}}_{\P_{n\in}(0)}$, we obtain the result.
\end{proof}

\begin{remark}
It is a tautology that the $n$-th layer in the totalization tower of a fibrant cosimplicial space is an $n$-fold loop space.  These cubical models for both the entries and the layers of the tower give a workable approach to this $n$-fold loop structure in the non-fibrant setting.
\end{remark}

\subsection{Surjectivity on components of maps in the tower}

In order to inductively establish a group structure on components of stages in the tower, we need a surjectivity result.

\begin{theorem}
\label{SurjOnPi0}  
The  restriction-projection map $\tilde{p_n}: \AM_n \to \AM_{n-1}$ induces a surjection on $\pi_0$. 
\end{theorem}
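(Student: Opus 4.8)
The plan is to reduce the surjectivity question to a statement about the fiber of the tower map, using the cubical-diagram technology developed just above. By Corollary~\ref{C1Fibration} (and its operadic analogue via Proposition~\ref{LayerLemma1}), the map $\widetilde{p_n}$ is, up to homotopy, the fibration $\widehat{s^n}\colon \AM_n \to \AM_{n-1}(\Delta^n)$ with fiber $\La_n$, where $\AM_{n-1}(\Delta^n)$ is a model for $\AM_{n-1}$. So it suffices to show that this fibration is surjective on $\pi_0$, equivalently that every component of the base is hit. The standard fibration long exact sequence of a (possibly disconnected) space says that the cokernel of $\pi_0(\AM_n) \to \pi_0(\AM_{n-1})$ is detected by the boundary map $\pi_1$ of the base (at each basepoint) into $\pi_0$ of the fiber; so it is enough to show that the relevant boundary/action is trivial, which in turn follows if $\La_n$ is connected --- or, more robustly, if the inclusion of the fiber over the basepoint into the fiber over any other point is a bijection on $\pi_0$, which here is automatic since we can translate basepoints using the group-like $\C_1$-action established in Section~\ref{LittleIntervals}.

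The key steps, in order, would be: (1) Replace $\widetilde{p_n}$ by $\widehat{s^n}$ using Lemma~\ref{AnotherTowerModel} and Proposition~\ref{LayerLemma1}, so that we are dealing with an honest fibration with identified fiber $\La_n$. (2) Invoke the Mayer--Vietoris / cubical decomposition of Section~\ref{CosimplicialCubical}: the fiber of $\widetilde{p_n}$ is the total fiber of the cube $\widetilde{C}^{fr}_\bullet\la\R^3\ra \circ \mathcal{G}_n|_{\P_{n\in}}$, and by Proposition~\ref{CubicalFibers} this total fiber is $\Omega^n$ of $\holim \widetilde{C}^{fr}_\bullet\la\R^3\ra\circ\mathcal{G}_n^!$. (3) Conclude that $\La_n \simeq \Omega^n(\text{something})$ for $n \geq 1$, hence $\La_n$ is an $n$-fold loop space, in particular an $H$-space and --- crucially for $n \geq 1$ --- it has the homotopy type of a loop space, so $\pi_0(\La_n)$ is a group and more importantly the fibration has a section-like behavior on components. (4) Feed this into the fibration exact sequence: since $\La_n = \Omega(\text{something})$, the connecting map $\partial\colon \pi_1(\text{base}) \to \pi_0(\La_n)$ lands in a group, and the image of $\pi_0(\AM_n)$ in $\pi_0(\AM_{n-1})$ is the full preimage of the trivial "monodromy" orbit; but because the total space and base both carry compatible $\C_1$-actions (Proposition~\ref{ProjectionIsC1Map} and its operadic counterpart) that are homotopy-commutative and hence make $\pi_0$ into groups, the map $\pi_0(\AM_n)\to\pi_0(\AM_{n-1})$ is a group homomorphism whose cokernel is a quotient group on which the obstruction vanishes. (5) Alternatively and more cleanly: exhibit an explicit partial section on components --- given $\psi \in \AM_{n-1}$, realized as an aligned map of $\Delta^{n-1}$, extend it to an aligned map of $\Delta^n$ by "inserting an infinitesimal collision" at the doubled coordinate, i.e.\ using a coface map $d^n$ (or the geometric doubling construction of Definition~\ref{CosimplicialModelOperadic}) to produce a lift whose image under $\widehat{s^n}$ is $\psi$; one checks this is well-defined on components because the space of infinitesimal configurations inserted is connected.

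I expect the main obstacle to be step (5)/(3): producing an actual lift (even just up to homotopy, just on one point of each component) rather than merely arguing abstractly that the cokernel is trivial. The cube-theoretic argument shows the fiber is a loop space, which gives that $\pi_0$ of the fiber is a group, but to get \emph{surjectivity} of $\widetilde{p_n}$ one still needs to know the connecting homomorphism is zero, and the slickest route to that is a genuine (homotopy) section of $\widehat{s^n}$ on $\pi_0$. The natural candidate section is "double the last point": given $\phi \in \AM_{n-1}(\Delta^n)$ (a map out of $\Delta^n$ that is standard after forgetting the last coordinate), compose with the coface-type doubling $d^n$ of Definition~\ref{CosimplicialModelOperadic} --- geometrically, glue in the infinitesimal two-point configuration $\mu$ at the $n$-th point --- to get a genuine element of $\AM_n$ whose restriction-projection recovers $\phi$. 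The delicate part is verifying the alignment conditions on all faces of $\Delta^n$ (particularly the faces where the new $n$-th coordinate collides with the $(n-1)$-st) and checking continuity at those faces, which is exactly the kind of boundary-compatibility check already carried out for the coface maps in Section~\ref{TheModels} and for the insertion maps $\circ_i$; so while technically fiddly, it reduces to facts already in hand, and the connectedness of the infinitesimal configuration space guarantees the construction is canonical enough on $\pi_0$.
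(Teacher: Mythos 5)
Your proposal correctly identifies the structural setup (replace $\widetilde{p_n}$ by the fibration $\widehat{s^n}$, invoke the Mayer--Vietoris cubical decomposition, and use Proposition~\ref{CubicalFibers} to describe the fiber as an iterated loop space). But the two routes you offer for finishing the argument both break down.

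First, the approach via ``group-like $\C_1$-actions'' is circular. A homotopy-commutative $\C_1$-action on a space $X$ makes $\pi_0(X)$ an abelian \emph{monoid}, not a group; inverses are not free. The fact that $\pi_0(\AM_n^{fr})$ is actually a group is precisely Theorem~\ref{Pi0IsGroup}, and its proof uses the present surjectivity statement (via Theorem~\ref{SurjOnPi0Framed}). So you cannot deploy the group structure on $\pi_0$ of the base to prove this theorem. Relatedly, knowing the fiber $\La_n$ is a loop space only tells you that $\pi_0(\La_n)$ is a group and says nothing a priori about fibers over other components of a possibly-disconnected base; the exact sequence $\pi_1(B) \to \pi_0(F) \to \pi_0(E) \to \pi_0(B)$ is not exact at $\pi_0(B)$, so loop-space structure on $F$ does not yield surjectivity of $\pi_0(E) \to \pi_0(B)$.

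Second, the proposed ``section on components by doubling'' does not produce elements of $\AM_n$. If $\phi \in \AM_{n-1}(\Delta^n)$, the map $d \circ \phi : \Delta^n \to \widetilde{C}_n\la\R^3\ra$ (for any coface $d$) fails the alignment conditions on the face $\{t_{n-1}=t_n\}$. On that face an element of $\AM_n$ must factor through the doubling coface $d^{n-1}$ (so that points $n-1$ and $n$ have collided with the prescribed direction of collision); but $\phi$ carries no constraint on that face (it is the face where $i_n$ acts by an identity), so $d\circ\phi$ there is $d$ applied to an arbitrary configuration, in which points $n-1$ and $n$ need not be colliding at all. There is no cosimplicial-identity rescue here, and indeed the existence of a naive section would trivialize the problem the paper works hard to solve.

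The essential missing ingredient is a connectivity estimate, not a group-theoretic argument. The paper's proof shows the fiber $\La_n$ is genuinely \emph{connected}: it applies Proposition~\ref{CubicalFibers} to rewrite the total fiber as $\Omega^{n-1}\tfib D^!$, then replaces the cube of compactified configuration spaces by a strongly co-Cartesian cube of complements of points in $\In^3$ (with $2$-connected inclusions), and invokes the Blakers--Massey theorem to conclude $\tfib D^!$ is $(n-1)$-connected, hence its $(n-1)$-fold loop space is connected. Combined with an induction on $n$ (the base of the fibration being connected by the inductive hypothesis), this gives connectedness of $\holim_{\P_{n+}}$, which via the pullback square of posets yields the desired surjectivity. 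Note also that the paper proves the unframed version first, precisely because the Blakers--Massey estimate fails once $O(3)$ factors are included; the framed statement is then deduced separately in Theorem~\ref{SurjOnPi0Framed} by a lifting argument.
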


We prove this unframed version first and then use it to prove the desired framed version, as
the end of the proof we give here breaks down in the framed setting.

\begin{proof}[Proof of Theorem~\ref{SurjOnPi0}]

We extend techniques from the previous subsection. Recall from Section~\ref{TheModels} 
the cosimplicial space $\tC'_\bullet \la \In^3\ra$, which we abbreviate here as $C$.
This functor defines both our cosimplicial model and, through pullback by the functors $\mathcal{G}_n$, our operadic mapping space model.

We use the square of posets (\ref{SquareOfPosets}) above.  
We are led to the map on homotopy limits induced by  $\P_{n+} \incl \P_{n\in},$
which if surjective on components implies the same for the map $\AM_n \to \AM_{n-1}$,
the left-vertical map in the square (\ref{SquareOfPosets}).

The initial object in $\P_{n\in}$ is $C (\{n\}) = \widetilde{C}'_0 \la \R^3 \ra$, a point.  
Thus the map on homotopy limits induced by $\P_{n+} \incl \P_{n\in}$  is surjective on components 
if and only if the homotopy limit over $\P_{n+}$ is connected.  
As before we reindex this diagram, using the isomorphism of $\P_{n+}$ with 
$\P_\nu[n-1]$ which sends a set containing $n$ to the set obtained by removing $n$.

We  prove the connectedness of this homotopy limit by induction on $n$.  
The  case $n=1$ is immediate, as  $\widetilde{C}'_{1}\la \R^3 \ra = S^2$ is connected.
For the induction step, we exhibit the homotopy limit $\P_\nu[n-1]$ as a fibration over a connected space with a connected fiber.  
Consider the reindexed pushout square (\ref{SquareOfPosets}),  with $n$ replaced by $n-1$ everywhere.
The inclusion $\P_\nu[n-2] \incl \P_{\neq n-1}$ is left cofinal, so the left-hand column of the reindexed (\ref{SquareOfPosets}) gives a 
fibration whose base, the homotopy limit of $D$ which we can take over
$\P_\nu[n-2]$, is connected by induction.  

The square of holim's induced by (\ref{SquareOfPosets}) is a pullback,
so it suffices to establish connectedness of the fiber of the map induced by the right-hand column,
taken over the component to which the connected space $\holim_{\P_{\neq n-1}} D$ maps.  
Here we choose basepoints for $D$ by choosing the basepoint $(1,0,0)$ in $S^2 = \widetilde{C}'_1\la \In^3  \ra$.  

This induced square of holim's is of based spaces, we can describe the
fiber of this map as the total fiber of the based cube $D(\P_{n-1})$.  Apply Proposition~\ref{CubicalFibers} 
to deduce that this total fiber is $\Omega^{n-1}\mathrm{tfib} D^!$.

To show that $\Omega^{n-1}\mathrm{tfib} D^!$ is connected or, equivalently, that $D^!$ is $(n-1)$-Cartesian, we 
use a Blakers-Massey Theorem.
This is an $(n-1)$-cube of spaces $\tC'_i \la \In \ra$, $i \leq n-1$, of configurations in $\In^3$ up to scaling and translation, with a tangent vector at each point.  The maps forget points and corresponding tangent vectors.  We  replace this by a homotopy equivalent cube of spaces $C'_i \la \In^3 \ra$ or even 
$C'_i \left( \In^3 \right)$ of  configurations in $\In^3$ with a tangent vector at each point.  
Every map in this cube is a fibration, so we can take the fiber in one direction. 
The resulting $(n-2)$-cube which we call $\phi D^!$ has entries $\In^3 - f([i])$, where the deleted points are
images of a fixed embedding $f: [n] \hookrightarrow \In^3$.  The maps in the cube are inclusions of 
 open submanifolds and are thus cofibrations.   Moreover, $\phi D^!$
 is a push-out cube, so it is strongly co-Cartesian.  
 
 Each  inclusion
 $\In^3 -  f([i+1]) \hookrightarrow \In^3 - f([i])$ is a $2$-connected map.
 The Blakers-Massey theorem (Theorem~2.3 of \cite{GwCalc2}; see also \cite[Theorem 6.2.1]{MunsonVolicCubes})
 applies to give that the total fiber of $\phi D^{!}$  is $n$-connected.  Thus its $n$th loop space is connected, which yields the result.
\end{proof}

We will say more about this total fiber -- and thus the layers in the tower -- both below in this section and in Section~\ref{SpectralSequence} when we make spectral sequence calculations.

In the proof just given, the analogous $(n-1)$-cube with frames instead of tangent vectors is not $(n-1)$-Cartesian, which is why we prove 
the framed version separately now.

\begin{theorem}
\label{SurjOnPi0Framed}
In the framed setting, the map $\widetilde{p_n}: \AM^{fr}_n \to \AM^{fr}_{n-1}$ induces a surjection on $\pi_0$. 
\end{theorem}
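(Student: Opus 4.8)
The plan is to reduce the framed statement to the unframed one (Theorem~\ref{SurjOnPi0}, already proved) by exhibiting the framed mapping space as a ``twisted product'' of the unframed one with frame data, in a way compatible with the restriction-projection maps $\widetilde{p_n}$. Concretely, there is a forgetful map $\AM^{fr}_n \to \AM_n$ coming from the object-wise projection $\widetilde{C}^{fr}_\bullet\la\R^3\ra \to \widetilde{C}'_\bullet\la\R^3\ra$ (recording only the first column of each frame as the tangent vector), and this map is a fibration of towers: it commutes with $\widetilde{p_n}$ on the nose, since both the forgetful map and the projection are defined entrywise by forgetting data and hence commute with the cosimplicial structure maps and with $\mathcal{G}_n$.

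First I would identify the fiber $F_n$ of $\AM^{fr}_n \to \AM_n$. Using that $Fr\,\In^3 \cong \In^3\times O(3)$ and that the first column of a frame is its image in $ST\In^3$, the fiber of $Fr\,M \to STM$ at each point is $SO(2) \cong S^1$. Running this through the homotopy-limit definition of $\AM^{fr}_n$ over $\P_\nu[n]$, one sees that $F_n$ is the homotopy limit over $\P_\nu[n]$ of the composite of $\mathcal{G}_n$ with the cosimplicial space $[m]\mapsto (S^1)^m$ whose cofaces are the ``doubling'' maps (which on frame data are diagonal-type insertions) and whose codegeneracies are projections. In other words $F_n$ is a $\widetilde{\mathrm{Tot}}^n$ of an explicit cosimplicial space built from $S^1$'s; since $S^1$ is a topological abelian group (indeed a $K(\Z,1)$) this cosimplicial space is one of abelian groups, so each $F_n$ is an infinite loop space, hence in particular \emph{connected}. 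Then I would observe that the restriction-projection $F_n \to F_{n-1}$ induced by $\widetilde{p_n}$ is surjective on $\pi_0$ trivially, because $\pi_0 F_n = \pi_0 F_{n-1} = 0$.

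Next I would assemble these: for each $n$ we have a map of fibration sequences
\[
\xymatrix{
F_n \ar[r] \ar[d] & \AM^{fr}_n \ar[r] \ar[d]^-{\widetilde{p_n}} & \AM_n \ar[d]^-{\widetilde{p_n}} \\
F_{n-1} \ar[r] & \AM^{fr}_{n-1} \ar[r] & \AM_{n-1}.
}
\]
Given $y \in \pi_0 \AM^{fr}_{n-1}$, let $\bar y \in \pi_0 \AM_{n-1}$ be its image. By Theorem~\ref{SurjOnPi0} there is $\bar x \in \pi_0 \AM_n$ with $\widetilde{p_n}(\bar x) = \bar y$. Lift $\bar x$ to some $x' \in \pi_0 \AM^{fr}_n$ (possible since $\AM^{fr}_n \to \AM_n$ is a fibration, hence surjective on $\pi_0$, as the fiber $F_n$ is nonempty). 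Then $\widetilde{p_n}(x')$ and $y$ lie in the same fiber of $\AM^{fr}_{n-1}\to\AM_{n-1}$ over $\bar y$; since that fiber is a torsor under the connected space $F_{n-1}$ and $F_{n-1}$ acts transitively on $\pi_0$ of the fiber (the action being by the loop-space/group structure), in fact $\pi_0$ of the fiber is a single point, so $\widetilde{p_n}(x') = y$ in $\pi_0$. Here one uses the $\C_1$-compatibility from Corollary~\ref{C1Fibration} and Proposition~\ref{InclusionIsC1} to make precise the claim that the fiber has trivial $\pi_0$: the relevant layer comparison shows that the obstruction to moving within a fiber of $\widetilde{p_n}$ between two framed lifts of the same unframed class lives in $\pi_0 F_{n-1} = 0$.

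The main obstacle I anticipate is the identification of the fiber $F_n$ and the verification that it is connected. The subtlety is that $F_n$ is not literally $(S^1)^{\text{something}}$ but a homotopy limit of a nontrivial cosimplicial diagram of tori over $\P_\nu[n]$, and one must check that the cofaces (the frame-data part of the ``doubling'' insertion maps $\circ_i$, which multiply frames by $\alpha_i$) really do assemble into a cosimplicial \emph{abelian group} object — this is where one needs $SO(2)$ abelian, and it is exactly the point that \emph{fails} for the frame bundle with structure group $SO(3)$ if one tried to argue more naively, matching the paper's remark that the framed cube is not $(n-1)$-Cartesian. Once $F_n$ is seen to be an iterated loop space on a connected space (equivalently $\widetilde{\mathrm{Tot}}$ of a cosimplicial simplicial abelian group), connectedness and the triviality of $\pi_0$ of all the relevant fibers is automatic, and the diagram chase above finishes the proof. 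An alternative to the explicit torus computation, which I would fall back on if the cosimplicial bookkeeping gets unwieldy, is to use the $\C_1$-structure directly: by Corollary~\ref{C1Fibration} the layers $\La_n^{fr}$ are $\C_1$-spaces and the framed-to-unframed comparison is a $\C_1$-map whose layer-wise fiber is built from $S^1$'s, so group-completion/connectivity arguments at the level of these $H$-space layers give surjectivity on $\pi_0$ inductively up the tower without ever naming $F_n$ explicitly.
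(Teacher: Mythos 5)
Your overall plan — fiber $\AM^{fr}_n \to \AM_n$ by forgetting everything but the first column of each frame, then run a diagram chase using Theorem~\ref{SurjOnPi0} for the base — is reasonable, and the paper's own proof is in a similar spirit in that it also uses Theorem~\ref{SurjOnPi0} together with the projection $O(3) \to S^2$. But your execution has a concrete error that derails the argument.

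You claim the fiber $F_n$ of $\AM^{fr}_n \to \AM_n$ is connected, and the justification you offer is that $F_n$ is the $\widetilde{\mathrm{Tot}}^n$ of a cosimplicial abelian group built from $S^1$'s, hence an infinite loop space, hence connected. That implication is false in general (discrete abelian groups such as $\Z$ or $\Z/2$ are infinite loop spaces), and it is false in this case. The cosimplicial space $[m]\mapsto (S^1)^m$ with diagonal-type cofaces and projection codegeneracies is precisely the standard cosimplicial model for $\Omega S^1$; its normalized complex has $\Z$ in degree one and nothing else, so $\pi_0$ of each partial totalization $\widetilde{\mathrm{Tot}}^n$ (for $n\geq 1$) is $\Z$, not $0$. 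The paper makes this explicit later: \emph{``the subcubical diagram which defines $AM^{fr}_n$ fibers over that which defines $AM_n$ with fiber given by a $n$-subcubical diagram which models $\Omega S^1$''}, and uses the resulting $\Z$ to record the framing number. So your $F_n$ has $\pi_0\cong\Z$, and the final step of your diagram chase — ``the fiber has trivial $\pi_0$, so $\widetilde{p_n}(x')=y$'' — does not hold. You also cannot invoke Corollary~\ref{C1Fibration} or Proposition~\ref{InclusionIsC1} for this: those concern the layers $L_n \to AM_n^{fr}\to AM_{n-1}^{fr}$ of the tower, not the framed-to-unframed comparison fibers.

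The argument could potentially be salvaged by proving instead that $\pi_0 F_n \to \pi_0 F_{n-1}$ is surjective and that the $\pi_0 F_{n-1}$-action on fibers of $\pi_0\AM^{fr}_{n-1}\to\pi_0\AM_{n-1}$ is transitive and compatible with the one on $\pi_0\AM^{fr}_n$ under $\widetilde{p_n}$, but you'd have to supply those claims; as written the proof is wrong at the crucial step. The paper's actual argument is more elementary and sidesteps fiber connectivity entirely: given $\Phi\in\AM^{fr}_{n-1}$ and an unframed lift $\psi\in\AM_n$ of its underlying $\phi$, one builds the framed lift directly as a map $\Delta^n \to \tC_n\la\In^3\ra\times O(3)^n$. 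The first $n-1$ frame factors are supplied by $\Phi$, and the last frame factor is constrained only on two faces of $\Delta^n$ (where it must agree with a neighbor or with the boundary framing); since that union of faces is a $D^{n-1}$ sitting inside $\partial D^n$ rather than all of it, extension over $D^n$ is automatic. This is a genuinely different and simpler mechanism than the fibration diagram chase you propose, and it avoids having to analyze the disconnected $\Omega S^1$-type fibers at all.
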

\begin{proof}
 As $\AM^{fr}_n$ is a subspace of $\Map(\Delta^n, \tC_n\la \In^3 \ra \x O(3)^n)$, we consider the diagram
\[
\xymatrix{
\Delta^n \ar[r] \ar[d] & \tC_n\la \In^3 \ra \x O(3)^n \ar[d]  \ar[r]  & \tC_n \la \In^3 \ra \x (S^2)^n \ar[d] \\
\Delta^{n-1} \ar[r] & \tC_{n-1} \la \In^3 \ra \x O(3)^{n-1} \ar[r]  & \tC_{n-1} \la \In^3 \ra \x (S^2)^{n-1}.
}
\]
Suppose we have $\Phi \in AM^{fr}_{n-1}$.  
Let $\phi$ be the image of $\Phi$ under the map $AM^{fr}_{n-1} \to AM_{n-1}$, which essentially composes a map to $\tC_n\la \In^3 \ra \x O(3)^n$ with the projection onto $\tC_n\la \In^3 \ra \x (S^2)^n$, using the first vector in each frame.  
By Theorem \ref{SurjOnPi0}, there is a $\psi \in AM_n$ whose image in $AM_{n-1}$ is in the same component as $\phi$.  We  
 lift $\psi$ to a map $\Delta^n \to \tC_n\la \In^3 \ra \x O(3)^{n-1}$, using $\Phi$ to define the map to the $O(3)^{n-1}$ factor.  
 
It remains to lift this map to one additional factor of $O(3)$.  
The map $\Delta^n \to \tC_n\la \In^3 \ra \x O(3)^n$ is prescribed on two faces of $\Delta^n$, namely the faces
on which the additional factor must agree with another factor.  
Away from these faces, there 
are no constraints on the map to the additional  factor of $O(3)$.  Thus topologically the problem is  to 
extend this map from $D^{n-1} \subset \d D^n$ to $D^n$, which is immediate.
\end{proof}

\subsection{Group structure}
\label{GroupStructure}

\begin{lemma}
\label{LoopspaceStructuresAgree}
The multiplication on $\La_n$ obtained by restricting the multiplication 
$\mu'$ of Definition~\ref{MuMultiplication}
is homotopic to the one coming from the description of $\La_n$ as the $n$-fold loopspace of the total fiber of an $n$-cube of configuration spaces in Proposition~\ref{CubicalFibers}.
\end{lemma}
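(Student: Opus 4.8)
The goal is to identify two a priori different $H$-space structures on $\La_n$: the ``concrete'' one coming from the little-intervals multiplication $\mu'$ restricted to the fiber, and the ``formal'' one coming from the identification $\La_n \simeq \Omega^n \tfib(X^\bullet \circ \mathcal G_n^!)$ of Proposition~\ref{CubicalFibers}. The strategy is to factor this comparison into a chain of intermediate identifications, at each stage checking that the relevant maps respect multiplications. The key organizing principle is that both structures are ultimately governed by \emph{insertion of infinitesimal configurations}: on the $\mu'$ side this is literally Definition~\ref{MuMultiplication}, and on the loop-space side the concatenation of loops in $\Omega^n$ is implemented by the coface/doubling maps of the cosimplicial space, which are again insertion maps $\circ_i \mu$. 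So the two multiplications are ``the same operad acting,'' and one only needs to track this compatibility through the equivalences.

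\textbf{Key steps, in order.} First I would reduce to the loop-space model via the chain of equivalences already established: $\La_n = \fib(\widehat{s^n}\co \AM_n^{fr}\to \AM_{n-1}^{fr}(\Delta^n))$ is, by the square of posets (\ref{SquareOfPosets}) and the cofinality of $\P_\nu[n-1]\incl\P_{\neq n}$, the total fiber of the based $n$-cube $X^\bullet\circ\mathcal G_n|_{\P_{n\in}}$, and then by Proposition~\ref{CubicalFibers} this is $\Omega^n\holim X^\bullet\circ\mathcal G_n^!$. Second, I would observe that the $\C_1$-action $\mu'$ on $\AM_n^{fr}$ (Definition~\ref{MuMultiplication}) is defined entrywise by inserting $\phi_i|_{L_i}$ into the point $(L_i(0),0,0)$, and that this operation commutes with the codegeneracy/degeneracy maps and hence with $\widehat{s^n}$ (this is essentially Proposition~\ref{ProjectionIsC1Map} together with Proposition~\ref{InclusionIsC1} in the guise of Corollary~\ref{C1Fibration}); therefore $\mu'$ restricts to the fiber $\La_n$ and, more importantly, descends through each stage of the Mayer--Vietoris/cubical reduction, since those reductions are built from cofinality equivalences and from taking total fibers of cubes of $\C_1$-spaces with $\C_1$-maps. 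Third, I would track $\mu'$ through the retract-of-cubes argument of Lemma~\ref{RetractOfCubes} and Proposition~\ref{CubicalFibers}: at each application of Lemma~\ref{RetractOfCubes} the total fiber acquires one more loop coordinate, and I must check that the $\mu'$-multiplication on the total fiber of $C_\bullet\to D_\bullet$ corresponds, under the loop-space identification, to the \emph{pointwise} (Eckmann--Hilton) multiplication on $\Omega$(total fiber of $D_\bullet\to C_\bullet$), which it does because the retraction maps are codegeneracies and these are $\C_1$-maps by the cosimplicial identities. Iterating $n$ times, the $\mu'$-multiplication on $\La_n$ becomes the standard multiplication on $\Omega^n\holim X^\bullet\circ\mathcal G_n^!$ given by concatenation in any one of the $n$ loop coordinates — and by Eckmann--Hilton all of these agree with the multiplication coming from the infinite loop / totalization structure. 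Finally I would invoke Proposition~\ref{LayerLemma1} and Corollary~\ref{CommutativeMultOnAM} to know the result transfers between the $\AM_n$ and $AM_n$ pictures and that the answer is homotopy-commutative, completing the identification.

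\textbf{Main obstacle.} The delicate point is not any single equivalence but the \emph{coherence} of ``$\mu'$ is a map of $\C_1$-spaces'' all the way down the tower of reductions: I need that taking total fibers of cubes, reindexing along cofinal functors, and the loop-space shuffles in Lemma~\ref{RetractOfCubes} all preserve not just the homotopy type but the (homotopy class of the) $\C_1$-action, and in particular that the ``internal fiber then horizontal then vertical'' versus ``internal then vertical then horizontal'' reorderings used to prove Lemma~\ref{RetractOfCubes} are compatible with the multiplication. Concretely, the heart of the matter is a Eckmann--Hilton-style argument showing that the multiplication $\mu'$, which acts by insertion at \emph{distinct} spatial points $(L_i(0),0,0)$, agrees up to homotopy with loop concatenation, which acts by insertion of doubled points at a \emph{single} degenerate locus; the homotopy between these is exactly the kind of ``slide the little intervals together / reparametrize the domain'' homotopy used in Theorem~\ref{HtpyCommThm}, Proposition~\ref{HtpicToMu}, and Proposition~\ref{CompatibleMultiplications}, and the bulk of the work is assembling these local homotopies into a single coherent one compatible with the $n$-fold iteration. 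Once that is in place the statement follows, since any two $H$-space structures on a space that both arise from a $\C_1$-action and agree with the totalization-tower loop structure must be homotopic.
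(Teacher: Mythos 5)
Your plan identifies the right targets---the chain of equivalences from Proposition~\ref{CubicalFibers}, the compatibility of $\mu'$ with codegeneracies, and an Eckmann--Hilton comparison---but it stops exactly where the work begins. You state that the ``heart of the matter'' is showing that $\mu'$ (insertion at distinct spatial points $(L_i(0),0,0)$) agrees up to homotopy with loop concatenation, and then you describe the needed homotopy as ``the kind of slide/reparametrize homotopy'' from Theorem~\ref{HtpyCommThm} and Proposition~\ref{HtpicToMu} without constructing it. That is the gap. The paper constructs this homotopy explicitly, and it is not a rehash of the earlier ones: it introduces a third multiplication $\mu_\Omega$ on $\La_n(\Delta^n)$, defined by precomposing with $(t_1,\ldots,t_{n-1})\times t\mapsto(t_1,\ldots,t_{n-1},(1-t)t_{n-1}+t)$ so that elements become literal loops in $\Map(\Delta^{n-1},\widetilde C'_n\la\In^3\ra)$, and then proves $\mu'\simeq\mu_\Omega$ by a new ``decoupling'' homotopy $r\co\Delta^n\times\In\to\Delta^n$ that drags $\vec t$ toward the constant configuration at $t_n$ until each $t_i$ lies in $[-1,0]$ or $[0,1]$. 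This decoupling depends crucially on the facts that one is working in $\La_n$ (so the product is constant on $t_n=0$) and that mixed ``stacking'' directions $v_{ij}=(1,0,0)$ are preserved---neither of which is visible from the abstract $\C_1$-map bookkeeping you propose.

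Your assertion that the compatibility with the loop-space identification ``follows because the retraction maps are codegeneracies and these are $\C_1$-maps'' is too quick. Knowing the structure maps are $\C_1$-maps tells you a $\C_1$-action is carried through Lemma~\ref{RetractOfCubes}, but it does not by itself identify which multiplication on $\Omega^n\tfib$ you end up with, and your closing claim---that ``any two $H$-space structures that both arise from a $\C_1$-action and agree with the totalization-tower loop structure must be homotopic''---presupposes agreement with the loop structure, which is the thing being proved. The paper avoids this by going through $\mu_\Omega$: it is manifestly a loop concatenation, and the second half of the proof matches $\mu_\Omega$ against the single-loop multiplication on $\Omega\tfib(\tC'_\bullet\circ\mathcal G_{\P_{n\in}(n-1)})$ via an explicit homeomorphism $I^{n-1}\cong\Delta^{n-1}$ identifying $\bigcup\{t_i=1\}$ with $\{t_{n-1}=1\}$, at which point Eckmann--Hilton finishes. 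In short: your outline is on the right track structurally, but as written it is a promissory note for the key homotopy rather than a proof, and the paper's intermediate multiplication $\mu_\Omega$ and the concrete decoupling homotopy $r$ are the missing ingredients.
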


This compatibility is key in proving the following.  Recall the multiplication $\mu$ from Definition~\ref{SecondInfinitesimalIntervalAction}. 

\begin{theorem}
\label{Pi0IsGroup}
$\pi_0 (\AM_n^{fr})$ is an abelian group with the multiplication $\mu$ (or $\mu'$) on $\AM_n^{fr}$.
\end{theorem}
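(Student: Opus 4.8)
The plan is to assemble the pieces from the preceding sections. First I would note that by Corollary~\ref{CommutativeMultOnAM} and Theorem~\ref{HtpyCommThm} (together with Proposition~\ref{HtpicToMu}), $\pi_0(\AM_n^{fr})$ is an abelian monoid under the multiplication induced by $\mu$, and that $\mu$ and $\mu'$ induce the same multiplication on $\pi_0$ since they are homotopic as maps. The identity element is the class of the evaluation map of the unknot (the constant map at the infinitesimal configuration with all $x_{ij}=(1,0,0)$), which is the unit for the $\C_1$-action. So it remains only to produce inverses, i.e.\ to show the monoid $\pi_0(\AM_n^{fr})$ is a group, and I would do this by induction on $n$ using the fibration sequence $\La_n \to \AM_n^{fr} \to \AM_{n-1}^{fr}$ (more precisely $\widehat{s^n}: \AM_n^{fr} \to \AM_{n-1}^{fr}(\Delta^n)$, which is a $\C_1$-fibration by the $\AM$-analogue of Corollary~\ref{C1Fibration}).

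The base case $n=0$ is trivial ($\AM_0^{fr}$ is a point, or $O(3)$-worth of a point, with trivial $\pi_0$ operations). For the inductive step, I would consider the long exact-type sequence of the fibration on components: $\pi_0(\La_n) \to \pi_0(\AM_n^{fr}) \xrightarrow{\widetilde{p_n}} \pi_0(\AM_{n-1}^{fr})$. By Theorem~\ref{SurjOnPi0Framed}, $\widetilde{p_n}$ is surjective on $\pi_0$, and by the inductive hypothesis $\pi_0(\AM_{n-1}^{fr})$ is an abelian group. By Proposition~\ref{CubicalFibers} and Lemma~\ref{LoopspaceStructuresAgree}, $\La_n$ is an $n$-fold loop space ($n\geq 1$) whose multiplication agrees with the restriction of $\mu'$; hence $\pi_0(\La_n)$ is itself a group, and the $\C_1$-action makes $\pi_0(\La_n) \to \pi_0(\AM_n^{fr})$ a map of monoids compatible with all this structure. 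Given an element $x \in \pi_0(\AM_n^{fr})$, its image $\widetilde{p_n}(x)$ has an inverse $\bar{y}$ in $\pi_0(\AM_{n-1}^{fr})$; lifting $\bar{y}$ (using surjectivity) to some $y \in \pi_0(\AM_n^{fr})$, the product $xy$ lies in the kernel of $\widetilde{p_n}$, i.e.\ in the image of $\pi_0(\La_n)$, which is a group; choosing a preimage $z \in \pi_0(\La_n)$ of $xy$ and an inverse $z^{-1}$ in $\pi_0(\La_n)$, one checks $x \cdot (y \cdot \iota(z^{-1}))$ is the identity, where $\iota$ denotes the monoid map $\pi_0(\La_n) \to \pi_0(\AM_n^{fr})$. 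So $x$ has a right inverse; since the monoid is commutative, this is a two-sided inverse, and $\pi_0(\AM_n^{fr})$ is an abelian group.

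The main obstacle is making the ``kernel equals image of $\pi_0(\La_n)$'' step rigorous: for a fibration of spaces that are unbased and possibly disconnected, one does not literally have a long exact sequence, so I would either base everything at the unknot component (which is legitimate since all the relevant structure maps respect the unknot) and invoke the based long exact sequence of a fibration, or argue directly that the monoid action of $\pi_0(\La_n)$ on the fiber over the identity component is transitive on that fiber's component set. Establishing that this action is by the group $\pi_0(\La_n)$ and is compatible with the multiplication $\mu'$ on $\AM_n^{fr}$ is exactly the content provided by Lemma~\ref{LoopspaceStructuresAgree} and Corollary~\ref{C1Fibration}, so the argument should go through; the remaining care is purely in the bookkeeping of basepoints and in confirming that Theorem~\ref{SurjOnPi0Framed}'s surjection is a surjection of sets underlying the relevant monoids, which it is since it is induced by a $\C_1$-map. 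Commutativity throughout is inherited from Theorem~\ref{HtpyCommThm} and Corollary~\ref{CommutativeMultOnAM}, so no separate argument is needed there.
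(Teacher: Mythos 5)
Your proposal is correct and takes essentially the same route as the paper: prove abelian monoid structure via Theorem~\ref{HtpyCommThm}, then induct on $n$ using the fibration $\widetilde{L}_n \to \AM_n^{fr} \to \AM_{n-1}^{fr}$, surjectivity from Theorem~\ref{SurjOnPi0Framed}, and the loop-space/group structure on $\pi_0(\widetilde{L}_n)$ from Lemma~\ref{LoopspaceStructuresAgree}, concluding via the elementary fact about short exact sequences of monoids. You have simply spelled out the diagram-chase for that elementary fact and flagged the basepoint bookkeeping, both of which the paper leaves implicit.
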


\begin{proof}

By Theorem~\ref{HtpyCommThm}, $\pi_0(\AM^{fr}_n)$ is an abelian monoid.  We prove that this monoid is  a group by induction on $n$.  

The  case  $n=0$ is clear, since $\AM_0^{fr} = *$.  Suppose $n\geq 1$, and $\AM_{n-1}^{fr}$ is a group.  
From Lemma~\ref{LoopspaceStructuresAgree}, the fiber $\widetilde{L}_n$ is a loop space  when 
$n\geq 1$, so that its components form a group.
Moreover, $\pi_0(\widetilde{L}_n) \to \pi_0(\AM_n)$ is a map of monoids.
Thus applying $\pi_0$ to the fiber sequence $\widetilde{L}_n \to \AM^{fr}_n \to \AM_{n-1}^{fr}$  gives an exact sequence of monoids.
By Theorem \ref{SurjOnPi0Framed}, $\pi_0(\AM^{fr}_n \to \AM_{n-1}^{fr})$ is surjective.  
The  lemma now follows from the elementary fact that if $G \to H \to K \to 0$ is an exact sequence of monoids 
with $G, K$ groups, then $H$ is a group.

\end{proof}

\begin{corollary}
The homotopy fibers of $AM^{fr}_n$ over varying components of $AM^{fr}_{n-1}$ are homotopy equivalent.
\qed
\end{corollary}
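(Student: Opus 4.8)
The plan is to deduce the statement from the fibration sequence $L_n \to AM_n^{fr} \to {AM_{n-1}^{fr}}(\Delta^n)$ (equivalently $\widetilde{L}_n \to \AM_n^{fr} \to \AM_{n-1}^{fr}$ via the equivalence $\iota$ of Proposition~\ref{LayerLemma1}) together with the fact that $\pi_0(\AM_{n-1}^{fr})$ acts transitively on itself by translations, each of which is realized by a self-homotopy-equivalence of the total space. First I would recall, from Corollary~\ref{C1Fibration}, that $L_n \to AM_n^{fr} \to {AM_{n-1}^{fr}}(\Delta^n)$ is a fibration sequence of $\C_1$-spaces, so in particular both the total space and the base carry multiplications compatible with the projection. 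By Theorem~\ref{Pi0IsGroup} (transported along $\iota$), $\pi_0(AM_{n-1}^{fr})$ is a group; pick any element represented by a point $b \in AM_{n-1}^{fr}$, and observe that left-multiplication by (a point of $AM_n^{fr}$ lying over) $b$, composed with multiplication by an inverse, gives a self-map of $AM_n^{fr}$ covering the translation-by-$[b]$ self-map of $AM_{n-1}^{fr}$.

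The key steps, in order, are: (1) Use surjectivity on $\pi_0$ of the projection (Theorem~\ref{SurjOnPi0Framed}) to lift any chosen component $[b] \in \pi_0(AM_{n-1}^{fr})$ to a component $[\tilde b] \in \pi_0(AM_n^{fr})$, and fix a representative $\tilde b$. (2) Using the $\C_1$-action (or the multiplication $\mu'$ and its homotopy inverse coming from the group structure), form the self-map $\Theta_{\tilde b} \colon AM_n^{fr} \to AM_n^{fr}$, $x \mapsto \tilde b \cdot x$; since $\pi_0(AM_n^{fr})$ is a group, $\Theta_{\tilde b}$ is a homotopy equivalence with homotopy inverse $\Theta_{\tilde b^{-1}}$. (3) Check that $\Theta_{\tilde b}$ covers $\Theta_{[b]} \colon AM_{n-1}^{fr} \to AM_{n-1}^{fr}$, the translation by $[b]$, which holds because the projection $p_n$ is a map of $\C_1$-algebras (Proposition~\ref{ProjectionIsC1Map}). (4) Conclude that $\Theta_{\tilde b}$ restricts, over any component $[c]$ of $AM_{n-1}^{fr}$, to a homotopy equivalence between the homotopy fiber over $[c]$ and the homotopy fiber over $[c] \cdot [b]$; since $[b]$ was arbitrary and $\pi_0(AM_{n-1}^{fr})$ acts transitively on its components, all the homotopy fibers are homotopy equivalent.

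I expect the main obstacle to be a bookkeeping subtlety rather than a conceptual one: namely, ensuring that the homotopy fiber over a component is well-defined up to equivalence independent of the chosen basepoint within that component, and that the equivalence $\Theta_{\tilde b}$ genuinely induces an equivalence of homotopy fibers rather than merely a map at the level of $\pi_0$. This requires that $p_n$ be (replaced by) a fibration — which is exactly the content recorded just before the definition of $L_n$ (the projection or identity maps defining $\widehat{s^n}$ are fibrations, citing \cite[Lemma 3.5]{BCSS}) — so that "homotopy fiber" can be computed as a strict fiber, and then $\Theta_{\tilde b}$ carries the fiber over $[c]$ homeomorphically (or at least by a homotopy equivalence, after choosing compatible representatives) to the fiber over a point of $[c]\cdot[b]$. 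Granting this, the argument is a direct application of the group structure already established, so no further input is needed.
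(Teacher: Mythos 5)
Your proof is correct and gives exactly the argument the paper leaves implicit by marking the corollary with $\qed$: the projection is a fibration of $\C_1$-spaces with group-like $\pi_0$ (by Corollary~\ref{C1Fibration}, Theorem~\ref{Pi0IsGroup}, and Theorem~\ref{SurjOnPi0Framed}), so multiplication by a lift of any class in the base is a self-homotopy-equivalence of the total space covering the corresponding translation of the base, and this permutes the (homotopy) fibers by equivalences. This is the same approach the paper intends; no gap.
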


\begin{proof}[Proof of Lemma~\ref{LoopspaceStructuresAgree}]

Explicitly, ${\La_n}(\Delta^n)$ is the subspace of $\AM_n$  of maps $\Delta^n \to \tC'_n \la \In^3 \ra$ whose projection by 
forgetting the last point in the configuration yields the chosen constant map.  
This implies that such maps are themselves constant on the $t_n = t_{n-1}$ and $t_n = t_{n+1} (= 1)$ faces of $\Delta^n$.
We show that $\mu'$ and the multiplication from the $n$-fold loopspace structure both agree with another multiplication, which we now define.

Consider the map $\Delta^{n-1} \x \In \to \Delta^n$ given by $(t_1,...,t_{n-1}) \x t \mapsto (t_1,...,t_{n-1}, (1-t) t_{n-1} + t)$.  By pre-composing by this map, we can view an element of ${\La_n}(\Delta^n)$ as a loop in $\Map(\Delta^{n-1}, \widetilde{C}'_n \la \In^3 \ra)$, based at the constant standard map.
As we use configuration spaces modulo translations and scalings, these loops begin and end at the exact same map.  Loop concatenation 
then defines a  multiplication $\mu_\Omega$ on ${\La_n}(\Delta^n)$, which of course has homotopy inverses.

We first prove that  $\mu_\Omega$ is homotopic to the multiplication $\mu'$ of Definition~\ref{MuMultiplication} by ``homotoping away the mixed terms of  $\mu'$.''  Consider the function 
$r : \Delta^n \times \In \to \Delta^n$ which linearly interpolates between a configuration of times 
$\vec{t}=(t_1,...,t_n)$ and the ``constant configuration" at $t_n$,
but only until all the $t_i$ are contained in $[-1, 0]$ or $[0,1]$.  Let $r_s$ denote its restriction to $\Delta^n \times s$.
This function  $r$ is not continuous at points with $t_n=0$.
But because a product $\mu(\phi, \psi)$ of elements $\phi, \psi \in {\La_n}(\Delta^n)$ is constant along $t_n=0$, the  map
given by 
\[
\vec{t} \mapsto
\left\{
\begin{array}{ll}
\phi(r_s(\vec{t})) & \mbox{if } t_n \leq 0 \\
\psi(r_s(\vec{t})) & \mbox{if } t_n \geq 0 
\end{array}
\right.
\]
 is ultimately continuous.
Moreover, for any $s$ the resulting map is again an element of $\AM_n$ and in fact ${\La_n}(\Delta^n)$.
More specifically, in the image of the product $\mu'(\phi, \psi)$ in 
$\widetilde{C}_n\la \In^d \ra \subset \prod_{i < j} (S^{d-1})$,  the
vectors labelled by pairs $i,j$ with $t_i \leq 0$ and $t_j \geq 0$ are all of the form $(1, 0, 0)$, and this will be preserved throughout
the homotopy.

Thus the ``mixed stacking terms'' in the product are not essential and up to homotopy the product 
can be ``decoupled'' to a map which at each fixed time uses only one of $\phi$ and $\psi$.

The multiplication when $s=1$ agrees with $\mu_\Omega$, as the map $\Delta^{n-1} \x I \to \Delta^n$ that we used to define $\mu_\Omega$ sends $\Delta^{n-1} \x [-1,0]$ to $\{ t_n \leq 0\}$ and $\Delta^{n-1} \x [0,1]$ to $\{ t_n \geq 0\}$.  Thus $\mu'$ restricted to ${\La_n}(\Delta^n)$ 
is homotopic to $\mu_\Omega$, which completes the first half of our proof.

Next, Proposition \ref{CubicalFibers} in the case of $\AM_n$ implies that ${\La_n}(\Delta^n)$ 
is homotopy equivalent to $\Omega^n \tfib (\tC'_\bullet \la \In^3\ra \circ \G_n^!)$,
where $\tC'_\bullet \la \In^3\ra \circ \G_n^!$ is a cube of configuration spaces with structure maps which forget points.  
This is established in the proof of Proposition~\ref{CubicalFibers} through a series of equivalences 
$$
\tfib(\tC'_\bullet\la \In^3\ra \circ \mathcal{G}_n)  \simeq 
\Omega \tfib(\tC'_\bullet\la \In^3\ra \circ \mathcal{G}_{\P_{n\in}(n-1)}) \simeq \cdots 
\simeq \Omega^i \tfib(\tC'_\bullet\la \In^3\ra \circ \mathcal{G}_{\P_{n\in}(n-i)}).
$$

The multiplication coming from the single loopspace structure on $\Omega \tfib(\tC'_\bullet\la \In^3\ra \circ \mathcal{G}_{\P_{n\in}(n-1)}$
agrees up to homotopy with the others coming from  $i$-fold loopspace structure, including $i=n$. 
We will complete the proof by showing that this single loopspace multiplication coincides with $\mu_\Omega$.  

By definition $\tfib(\tC'_\bullet\la \In^3\ra \circ \mathcal{G}_{\P_{n\in}(n-1)}$ is the total fiber of an $(n-1)$-cube of fibers, 
where each fibration forgets the last point.   
An element of loops on this total fiber is a loop 
of maps $\alpha:I^{n-1} \to \fib\left(\tC'_n\la \In^3\ra \to  \tC'_{n-1}\la \In^3\ra\right)$ where
\begin{itemize}
\item each map $\alpha$ is constant on the face $\{t_i=1\}$ for every $i$ and
\item  on each of the remaining $(n-1)$ faces, the image of $\alpha$ is in the image of the appropriate doubling map.
\end{itemize}
On the other hand, an element of ${\La_n}(\Delta^n)$ is a loop
of maps $\alpha: \Delta^{n-1} \to \tC'_n\la \In^3\ra$ where
\begin{itemize}
\item the image of each $\alpha$ in $\tC'_{n-1}\la \In^3\ra$ is the standard constant map,
\item $\alpha$ is constant on the face $\{t_{n-1}=1\}$, and
\item on each of the remaining $(n-1)$ faces, the image of $\alpha$ is in the image of the appropriate doubling map.
\end{itemize}

Consider a homeomorphism from $I^{n-1}$ to $\Delta^{n-1}$ which identifies 
$\bigcup_{i=1}^n \{t_i=1\} \subset I^{n-1}$ with $\{t_{n-1}=1\} \subset \Delta^{n-1}$.
Via this homeomorphism, we get a homeomorphism between loops on this total fiber and 
${\La_n}(\Delta^n)$ which is compatible with the multiplications on each.
\end{proof}

\subsection{Summary}

Putting results together the results, the sequences $L_n \to AM_n^{fr} \to AM^{fr}_{n-1}$  in the Goodwillie-Weiss
tower approximating classical framed knots satisfy the following.  
\begin{enumerate}
\item They are fibration sequences of $\C_1$-spaces (Definition~\ref{IntervalsActionAM} and Corollary~\ref{C1Fibration}).  
\item $AM_n^{fr}$ receives a multiplication-preserving (in fact, $\C_1$-action-preserving) evaluation map from the knot space, with connect sum as its multiplication (Proposition~\ref{EvaluationIsC1Map}).
\item At the level of components, all multiplications are commutative (Corollary \ref{CommutativeMultOnAM}) and have inverses (Theorem \ref{Pi0IsGroup} and Proposition \ref{CompatibleMultiplications}).  Moreover,  $AM_n^{fr} \to AM^{fr}_{n-1}$
is surjective on components (Theorem~\ref{SurjOnPi0Framed}).
\end{enumerate}

\section{The homotopy tower is a finite-type invariant}
\label{FiniteType}

In this section we show that $\pi_0 (ev_n) \colon \pi_0 \Emb^{fr}(\R, \R^3) \to \pi_0 AM_n^{fr}$, which we've shown to be an abelian group valued invariant 
compatible with connect-sum of knots,  is a finite-type invariant of type $(n-1)$. 
The main tool is a theorem of Habiro \cite{Habiro2000}, which states that two classical knots  share finite-type invariants of degree $\leq n-1$, if and only if they differ by a series of {$C_n$-moves.} 

We describe these moves  in a way that will facilitate our proof. Let $E_2$ be a copy of $D^2\x I = D^2 \x [-1,1]$, with two properly embedded subarcs which clasp in the center as in Figure~\ref{fig:clasps}. 
Iteratively form $E_n$ from $E_{n-1}$ by replacing a regular neighborhood of the 
top left arc of $E_n$ by a copy of $E_2$.  Thus $E_n$ is $D^2 \x I$ with $n$ properly embedded arcs.
(So technically each $E_n$ is a pair of spaces.)

\begin{figure}[h]
\begin{center}
\begin{minipage}{4.5cm}
\begin{tikzpicture}[scale=2]
\coordinate(a) at (0,0);
\coordinate(b) at (2,0);
\coordinate(c) at (0,-1);
\coordinate(d) at (2,-1);
\coordinate(f) at (1.3,-.5);
\coordinate(g) at (.7,-.5);
\draw[name path=line 1](a) to[out=0, in =90] (f);
\path[name path=line 2](c) to[out=0,in=270] (f);
\path[name path=line 3](b) to[out=180, in =90] (g);
\draw[name path=line 4](d) to[out=180,in=270] (g);
\path [name intersections={of = line 1 and line 3}];
 \coordinate (S)  at (intersection-1);
\path [name intersections={of = line 2 and line 4}];
 \coordinate (T)  at (intersection-1);
 \filldraw[color=white] (S) circle (.1);
 \filldraw[color=white] (T) circle (.1);
\draw(b) to[out=180, in =90] (g);
\draw(c) to[out=0,in=270] (f);
\end{tikzpicture}
\end{minipage}
\hspace{1em}
\begin{minipage}{5.5cm}
\begin{tikzpicture}[scale=1]
\coordinate(a) at (-.5,0);
\coordinate(b) at (-.5,1);
\coordinate(c) at (-.5,2);
\coordinate(d) at (-.5,2.25);
\coordinate(e) at (-.5,2.75);
\coordinate(f) at (-.5,3);
\coordinate(g) at (4.5,0);
\coordinate(h) at (4.5,3);
\coordinate(C1) at (1,1.5);
\coordinate(C2) at (2,2.2);
\coordinate(C3) at (2.2,1.2);
\coordinate(C4) at (2,1.2);
\coordinate(D1) at (1.7,0);
\coordinate(D2) at (1,1);
\coordinate(E1) at (1.7,3);
\coordinate(E2) at (1.7,2.75);
\coordinate(E3) at (1.0,2.25);
\coordinate(E4) at (.8,2);
\coordinate(E5) at (2.4,2.25);
\coordinate(E6) at (2.65,2.25);
\coordinate(E7) at (1.8,1.25);
\coordinate(E8) at (1.8,1);
\coordinate(A1) at (2.2,4);
\coordinate(A2) at (2.2,-1);
\coordinate(A3) at (3.7,1.5);
\path[name path=line 1](h) to[out=180,in=90] (C1) to[out=270,in=180] (g);
\path[name path=line 2](a) to[out=0,in=180] (D1) to[out=0,in=0] (C2) to[out=180,in=0]  (D2) to (b);
\path[name path=line 3] (f) to (E1) to[out=0,in=90]  (E6) to[out=270,in=270] (C4) to[out=90,in=270] (E5) to[out=90,in=0] (E2) to (e);
\path[name path=line 4] (d) to (E3) to[out=0,in=180] (E7) to[out=0,in=90] (C3) to[out=270,in=0] (E8) to[out=180,in=0] (E4) to (c);
\path[name path=line 5] (A1) to[out=270,in=90] (A3) to[out=270,in=90]  (A2);
\path [name intersections={of = line 1 and line 2}];
 \coordinate (U1)  at (intersection-1);
 \coordinate (U2) at (intersection-2);
\path [name intersections={of = line 1 and line 3}];
 \coordinate (U3)  at (intersection-1);
 \coordinate (U4) at (intersection-2);
 \path [name intersections={of = line 1 and line 4}];
 \coordinate (U5)  at (intersection-1);
 \coordinate (U6) at (intersection-2);
\path [name intersections={of = line 2 and line 3}];
 \coordinate (U7)  at (intersection-1);
 \coordinate (U8) at (intersection-2);
\path [name intersections={of = line 2 and line 4}];
 \coordinate (U9)  at (intersection-1);
 \coordinate (U10) at (intersection-2); 
 \path [name intersections={of = line 3 and line 4}];
 \coordinate (U11)  at (intersection-1);
 \coordinate (U12) at (intersection-2);
 \draw(C1) to[out=270,in=180] (g);
 \filldraw[color=white] (U1) circle (.1);
 \filldraw[color=white] (U2) circle (.1);
 \draw(C2) to[out=180,in=0]  (D2) to (b);
  \filldraw[color=white] (U9) circle (.07);
   \filldraw[color=white] (U10) circle (.07);
\draw (d) to (E3) to[out=0,in=180] (E7) to[out=0,in=90] (C3);
    \filldraw[color=white] (U5) circle (.1);
    \filldraw[color=white] (U12) circle (.05); 
\draw (C4) to[out=90,in=270] (E5) to[out=90,in=0] (E2) to (e);
    \filldraw[color=white] (U4) circle (.1);
    \filldraw[color=white] (U8) circle (.1);   
\draw(f) to (E1) to[out=0,in=90]  (E6) to[out=270,in=270] (C4);
    \filldraw[color=white] (U3) circle (.1);
    \filldraw[color=white] (U7) circle (.1);   
    \filldraw[color=white] (U11) circle (.02);      
 \draw(C3) to[out=270,in=0] (E8) to[out=180,in=0] (E4) to (c);
  \filldraw[color=white] (U6) circle (.1);
\draw (a) to[out=0,in=180] (D1) to[out=0,in=0] (C2);
 \draw(h) to[out=180,in=90] (C1) ; 
\end{tikzpicture}
\end{minipage}
\end{center}
\caption{The basic clasp $E_2$ and the iteratively constructed $E_4$. The ambient cylinders are not drawn.}\label{fig:clasps}
\end{figure}
 Now a basic $C_n$-move on a knot $K$ is given by finding an embedding $e$ of $E_n$ into $\R^3$ which meets the knot $K$ as the given collection of arcs, and sliding another subarc of $K$ across the central disk 
$e(D^2 \x \{0\})$ 
of the embedded $E_n$ as in Figure~\ref{fig:nmove}.  In this Figure, the almost vertical strand in each left (respectively right) picture is a subarc of the knot which is isotopic to the front (respectively back) half of the boundary of this central disk.
\begin{figure}[h]
$$
\begin{minipage}{2cm}
\begin{tikzpicture}[scale=1]
\coordinate(a) at (0,0);
\coordinate(b) at (2,0);
\coordinate(c) at (0,-1);
\coordinate(d) at (2,-1);
\coordinate(f) at (1.3,-.5);
\coordinate(g) at (.7,-.5);
\coordinate(h) at (1.2,.5);
\coordinate(i) at (1.2,-1.5);
\coordinate(j) at (1.8,-.5);
\draw[name path=line 1](a) to[out=0, in =90] (f);
\path[name path=line 2](c) to[out=0,in=270] (f);
\path[name path=line 3](b) to[out=180, in =90] (g);
\draw[name path=line 4](d) to[out=180,in=270] (g);
\path [name intersections={of = line 1 and line 3}];
 \coordinate (S)  at (intersection-1);
\path [name intersections={of = line 2 and line 4}];
 \coordinate (T)  at (intersection-1);
 \filldraw[color=white] (S) circle (.1);
 \filldraw[color=white] (T) circle (.1);
\draw(b) to[out=180, in =90] (g);
\draw(c) to[out=0,in=270] (f);
\path[name path=line 5] (h) to[out=-90,in=90] (j) to[out=-90,in=90] (i);
\path [name intersections={of = line 5 and line 3}];
 \coordinate (U1)  at (intersection-1);
\path [name intersections={of = line 5 and line 4}];
 \coordinate (U2)  at (intersection-1);
 \filldraw[color=white] (U1) circle (.1);
 \filldraw[color=white] (U2) circle (.1);
\draw(h) to[out=-90,in=90] (j) to[out=-90,in=90] (i);
\end{tikzpicture}
\end{minipage}
\,\,
\mapsto
\,\,
\begin{minipage}{2cm}
\begin{tikzpicture}[scale=1]
\coordinate(a) at (0,0);
\coordinate(b) at (2,0);
\coordinate(c) at (0,-1);
\coordinate(d) at (2,-1);
\coordinate(f) at (1.3,-.5);
\coordinate(g) at (.7,-.5);
\coordinate(h) at (1.2,.5);
\coordinate(i) at (1.2,-1.5);
\coordinate(j) at (1.8,-.5);
\path[name path=line 1](a) to[out=0, in =90] (f);
\path[name path=line 2](c) to[out=0,in=270] (f);
\path[name path=line 3](b) to[out=180, in =90] (g);
\path[name path=line 4](d) to[out=180,in=270] (g);
\draw[name path=line 5] (h) to[out=-90,in=90] (j) to[out=-90,in=90] (i);
\path [name intersections={of = line 5 and line 3}];
 \coordinate (U1)  at (intersection-1);
\path [name intersections={of = line 5 and line 4}];
 \coordinate (U2)  at (intersection-1);
 \filldraw[color=white] (U1) circle (.1);
 \filldraw[color=white] (U2) circle (.1);
 \draw(d) to[out=180,in=270] (g);
\draw(a) to[out=0, in =90] (f);
\path [name intersections={of = line 1 and line 3}];
 \coordinate (S)  at (intersection-1);
\path [name intersections={of = line 2 and line 4}];
 \coordinate (T)  at (intersection-1);
 \filldraw[color=white] (S) circle (.1);
 \filldraw[color=white] (T) circle (.1);
\draw(b) to[out=180, in =90] (g);
 \draw(c) to[out=0,in=270] (f);
\end{tikzpicture}
\end{minipage}
$$
$$
\begin{minipage}{5cm}
\begin{tikzpicture}[scale=1]
\coordinate(a) at (-.5,0);
\coordinate(b) at (-.5,1);
\coordinate(c) at (-.5,2);
\coordinate(d) at (-.5,2.25);
\coordinate(e) at (-.5,2.75);
\coordinate(f) at (-.5,3);
\coordinate(g) at (4.5,0);
\coordinate(h) at (4.5,3);
\coordinate(C1) at (1,1.5);
\coordinate(C2) at (2,2.2);
\coordinate(C3) at (2.2,1.2);
\coordinate(C4) at (2,1.2);
\coordinate(D1) at (1.7,0);
\coordinate(D2) at (1,1);
\coordinate(E1) at (1.7,3);
\coordinate(E2) at (1.7,2.75);
\coordinate(E3) at (1.0,2.25);
\coordinate(E4) at (.8,2);
\coordinate(E5) at (2.4,2.25);
\coordinate(E6) at (2.65,2.25);
\coordinate(E7) at (1.8,1.25);
\coordinate(E8) at (1.8,1);
\coordinate(A1) at (2.2,4);
\coordinate(A2) at (2.2,-1);
\coordinate(A3) at (3.7,1.5);
\path[name path=line 1](h) to[out=180,in=90] (C1) to[out=270,in=180] (g);
\path[name path=line 2](a) to[out=0,in=180] (D1) to[out=0,in=0] (C2) to[out=180,in=0]  (D2) to (b);
\path[name path=line 3] (f) to (E1) to[out=0,in=90]  (E6) to[out=270,in=270] (C4) to[out=90,in=270] (E5) to[out=90,in=0] (E2) to (e);
\path[name path=line 4] (d) to (E3) to[out=0,in=180] (E7) to[out=0,in=90] (C3) to[out=270,in=0] (E8) to[out=180,in=0] (E4) to (c);
\path[name path=line 5] (A1) to[out=270,in=90] (A3) to[out=270,in=90]  (A2);
\path [name intersections={of = line 1 and line 2}];
 \coordinate (U1)  at (intersection-1);
 \coordinate (U2) at (intersection-2);
\path [name intersections={of = line 1 and line 3}];
 \coordinate (U3)  at (intersection-1);
 \coordinate (U4) at (intersection-2);
 \path [name intersections={of = line 1 and line 4}];
 \coordinate (U5)  at (intersection-1);
 \coordinate (U6) at (intersection-2);
\path [name intersections={of = line 2 and line 3}];
 \coordinate (U7)  at (intersection-1);
 \coordinate (U8) at (intersection-2);
\path [name intersections={of = line 2 and line 4}];
 \coordinate (U9)  at (intersection-1);
 \coordinate (U10) at (intersection-2); 
 \path [name intersections={of = line 3 and line 4}];
 \coordinate (U11)  at (intersection-1);
 \coordinate (U12) at (intersection-2);
 \path [name intersections={of = line 1 and line 5}];
 \coordinate (V1)  at (intersection-1);
 \coordinate (V2) at (intersection-2);
 \draw(C1) to[out=270,in=180] (g);
 \filldraw[color=white] (U1) circle (.1);
 \filldraw[color=white] (U2) circle (.1);
\filldraw[color=white] (V2) circle (.1);
 \draw(C2) to[out=180,in=0]  (D2) to (b);
  \filldraw[color=white] (U9) circle (.07);
   \filldraw[color=white] (U10) circle (.07);
\draw (d) to (E3) to[out=0,in=180] (E7) to[out=0,in=90] (C3);
    \filldraw[color=white] (U5) circle (.1);
    \filldraw[color=white] (U12) circle (.05); 
\draw (C4) to[out=90,in=270] (E5) to[out=90,in=0] (E2) to (e);
    \filldraw[color=white] (U4) circle (.1);
    \filldraw[color=white] (U8) circle (.1);   
\draw(f) to (E1) to[out=0,in=90]  (E6) to[out=270,in=270] (C4);
    \filldraw[color=white] (U3) circle (.1);
    \filldraw[color=white] (U7) circle (.1);   
    \filldraw[color=white] (U11) circle (.02);      
 \draw(C3) to[out=270,in=0] (E8) to[out=180,in=0] (E4) to (c);
  \filldraw[color=white] (U6) circle (.1);
\draw (a) to[out=0,in=180] (D1) to[out=0,in=0] (C2);
 \draw(h) to[out=180,in=90] (C1) ; 
 \filldraw[color=white] (V1) circle (.1);
  \draw (A1) to[out=270,in=90] (A3) to[out=270,in=90]  (A2);
\end{tikzpicture}
\end{minipage}
\,\,
\mapsto
\,\,
\begin{minipage}{5cm}
\begin{tikzpicture}[scale=1]
\coordinate(a) at (-.5,0);
\coordinate(b) at (-.5,1);
\coordinate(c) at (-.5,2);
\coordinate(d) at (-.5,2.25);
\coordinate(e) at (-.5,2.75);
\coordinate(f) at (-.5,3);
\coordinate(g) at (4.5,0);
\coordinate(h) at (4.5,3);
\coordinate(C1) at (1,1.5);
\coordinate(C2) at (2,2.2);
\coordinate(C3) at (2.2,1.2);
\coordinate(C4) at (2,1.2);
\coordinate(D1) at (1.7,0);
\coordinate(D2) at (1,1);
\coordinate(E1) at (1.7,3);
\coordinate(E2) at (1.7,2.75);
\coordinate(E3) at (1.0,2.25);
\coordinate(E4) at (.8,2);
\coordinate(E5) at (2.4,2.25);
\coordinate(E6) at (2.65,2.25);
\coordinate(E7) at (1.8,1.25);
\coordinate(E8) at (1.8,1);
\coordinate(A1) at (2.2,4);
\coordinate(A2) at (2.2,-1);
\coordinate(A3) at (3.7,1.5);
\path[name path=line 1](h) to[out=180,in=90] (C1) to[out=270,in=180] (g);
\path[name path=line 2](a) to[out=0,in=180] (D1) to[out=0,in=0] (C2) to[out=180,in=0]  (D2) to (b);
\path[name path=line 3] (f) to (E1) to[out=0,in=90]  (E6) to[out=270,in=270] (C4) to[out=90,in=270] (E5) to[out=90,in=0] (E2) to (e);
\path[name path=line 4] (d) to (E3) to[out=0,in=180] (E7) to[out=0,in=90] (C3) to[out=270,in=0] (E8) to[out=180,in=0] (E4) to (c);
\path[name path=line 5] (A1) to[out=270,in=90] (A3) to[out=270,in=90]  (A2);
\path [name intersections={of = line 1 and line 2}];
 \coordinate (U1)  at (intersection-1);
 \coordinate (U2) at (intersection-2);
\path [name intersections={of = line 1 and line 3}];
 \coordinate (U3)  at (intersection-1);
 \coordinate (U4) at (intersection-2);
 \path [name intersections={of = line 1 and line 4}];
 \coordinate (U5)  at (intersection-1);
 \coordinate (U6) at (intersection-2);
\path [name intersections={of = line 2 and line 3}];
 \coordinate (U7)  at (intersection-1);
 \coordinate (U8) at (intersection-2);
\path [name intersections={of = line 2 and line 4}];
 \coordinate (U9)  at (intersection-1);
 \coordinate (U10) at (intersection-2); 
 \path [name intersections={of = line 3 and line 4}];
 \coordinate (U11)  at (intersection-1);
 \coordinate (U12) at (intersection-2);
 \path [name intersections={of = line 1 and line 5}];
 \coordinate (V1)  at (intersection-1);
 \coordinate (V2) at (intersection-2);
 \draw (A1) to[out=270,in=90] (A3) to[out=270,in=90]  (A2);
 \filldraw[color=white] (V1) circle (.1);
 \filldraw[color=white] (V2) circle (.1); 
 \draw(C1) to[out=270,in=180] (g);
 \filldraw[color=white] (U1) circle (.1);
 \filldraw[color=white] (U2) circle (.1);
 \draw(C2) to[out=180,in=0]  (D2) to (b);
  \filldraw[color=white] (U9) circle (.07);
   \filldraw[color=white] (U10) circle (.07);
\draw (d) to (E3) to[out=0,in=180] (E7) to[out=0,in=90] (C3);
    \filldraw[color=white] (U5) circle (.1);
    \filldraw[color=white] (U12) circle (.05); 
\draw (C4) to[out=90,in=270] (E5) to[out=90,in=0] (E2) to (e);
    \filldraw[color=white] (U4) circle (.1);
    \filldraw[color=white] (U8) circle (.1);   
\draw(f) to (E1) to[out=0,in=90]  (E6) to[out=270,in=270] (C4);
    \filldraw[color=white] (U3) circle (.1);
    \filldraw[color=white] (U7) circle (.1);   
    \filldraw[color=white] (U11) circle (.02);      
 \draw(C3) to[out=270,in=0] (E8) to[out=180,in=0] (E4) to (c);
  \filldraw[color=white] (U6) circle (.1);
\draw (a) to[out=0,in=180] (D1) to[out=0,in=0] (C2);
 \draw(h) to[out=180,in=90] (C1) ;   
\end{tikzpicture}
\end{minipage}
$$
\caption{A $C_2$-move and a $C_4$-move.}\label{fig:nmove}
\end{figure}

The basic tool we need is the following theorem, which follows directly from work of Habiro\cite{Habiro2000}.

\begin{theorem}\label{thm:hab1}
Suppose that $\nu$ is an additive invariant of (unframed) knots. If it is invariant under $C_n$-moves then it is a finite-type invariant of degree $n-1$.
\end{theorem}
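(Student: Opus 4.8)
The plan is to derive this from Habiro's work \cite{Habiro2000}, using two facts beyond the $C_n$-move characterization quoted above. First, connected sum descends to the set $\mathcal{K}_n$ of $C_n$-equivalence classes of knots and makes it an abelian group (a theorem of Goussarov and Habiro): a $C_n$-move is local, hence can be performed inside a connected sum, so $\#$ descends to $\mathcal{K}_n$, and inverses exist by \cite{Habiro2000}. Second, the quotient map $q_n\co\{\text{knots}\}\to\mathcal{K}_n$ is itself an additive finite-type invariant of degree $n-1$ (it is essentially Habiro's universal integral invariant $\psi_{n-1}$). Granting these, the theorem follows by a short formal argument.

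Since $\nu$ is invariant under $C_n$-moves, it is constant on each $C_n$-equivalence class, so it factors uniquely as $\nu=\bar\nu\circ q_n$ for some function $\bar\nu\co\mathcal{K}_n\to A$, where $A$ is the target abelian group of $\nu$. I would then check that $\bar\nu$ is a group homomorphism: given $a,b\in\mathcal{K}_n$, choose knots with $q_n(K_1)=a$ and $q_n(K_2)=b$ (possible since $q_n$ is onto); because the operation on $\mathcal{K}_n$ is induced by $\#$ we have $a+b=q_n(K_1\# K_2)$, and therefore
\[
\bar\nu(a+b)=\nu(K_1\# K_2)=\nu(K_1)+\nu(K_2)=\bar\nu(a)+\bar\nu(b),
\]
using that $\nu$ is additive (and $\bar\nu(0)=\nu(\text{unknot})=0$).

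To conclude, recall that an abelian-group-valued invariant $v$ has finite type $\le m$ exactly when, for every singular knot $S$ with $m+1$ double points, the signed sum $\sum_{\eps}(-1)^{|\eps|}v(S_\eps)$ over the $2^{m+1}$ resolutions vanishes (this sum being the value of the canonical extension of $v$ on $S$, with $|\eps|$ the number of negatively resolved double points). Applying this with $m=n-1$: for an $n$-singular knot $S$, since $\bar\nu$ is a homomorphism,
\[
\sum_{\eps}(-1)^{|\eps|}\nu(S_\eps)=\bar\nu\!\left(\sum_{\eps}(-1)^{|\eps|}q_n(S_\eps)\right)=\bar\nu(0)=0,
\]
where $\sum_{\eps}(-1)^{|\eps|}q_n(S_\eps)=0$ in $\mathcal{K}_n$ precisely because $q_n$ already has type $n-1$ by the second imported fact. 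Hence $\nu$ is a finite-type invariant of degree $n-1$.

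All of the substance sits in the two facts imported from \cite{Habiro2000}; the remainder is the elementary diagram chase above. Accordingly, the one point that needs care is quoting Habiro's results in exactly the right form: that the $C_n$-equivalence classes form a \emph{group} under connected sum — so that an additive, $C_n$-invariant function is literally a homomorphism out of $\mathcal{K}_n$ — and that the canonical surjection onto that group is itself of type $n-1$ over $\Z$. With those in hand, there is no further obstacle, and additivity of $\nu$ is used exactly once, to promote $\bar\nu$ to a homomorphism.
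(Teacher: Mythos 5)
Your proof is essentially the paper's: factor $\nu$ through the abelian group $\mathcal{K}_n$ of knots modulo $C_n$-equivalence, use additivity of $\nu$ to show the induced map $\bar\nu$ is a homomorphism, and compose with the fact that the projection $q_n=\psi_{n-1}$ is itself a type-$(n-1)$ invariant (Habiro, Theorem~6.17); your final paragraph merely spells out the ``homomorphism after a type-$m$ map is type $m$'' step that the paper leaves implicit. The one point you pass over, which the paper does address, is that the $C_n$-moves defined in this section are not literally Habiro's: they are produced by iterating the ``replace the upper-left arc'' construction, so they correspond only to surgery along \emph{linear} claspers. Before Habiro's results can be imported, one must check that these generate the same equivalence relation as arbitrary $C_n$-moves; the paper does this via the topological IHX relation (Theorem~13 of \cite{CT1}), and also remarks that the geometric argument of the subsequent theorem applies to arbitrary claspers, making the restriction inessential. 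That verification is the only substantive content of the paper's proof missing from yours; otherwise the two arguments coincide.
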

\begin{proof}
The $C_n$ moves constructed here are an alternate formulation of clasper surgery, and in fact are very close to Habiro's original formulation in his master's thesis. If we had allowed $E_2$ to replace an arbitrary arc in iterating the construction (as opposed to the upper left), we would get an arbitrary capped clasper surgery. The ones constructed here correspond to ``linear claspers," where all nodes are directly adjacent to a leaf. The topological IHX relation allows us to reduce to this case.  (See Theorem 13 of \cite{CT1}.) So the $C_n$-moves introduced here are equivalent to the $C_n$-moves in \cite{Habiro2000}. The argument in Theorem~\ref{n-moves} works just as well if we allow $E_2$ to replace an arbitrary subarc at each stage, so one could dispense with this subtlety.

The monoid of knots modulo the equivalence relation of $n$-equivalence is a finitely generated abelian group \cite{Goussarov,Habiro2000}. Theorem 6.17 of \cite{Habiro2000} states that the natural projection $\psi_{n-1}$ from knots to this abelian group is a universal additive finite-type invariant of degree $n-1.$ So if $\nu$ is an additive invariant of knots which also is invariant under $C_n$ moves, it induces a homomorphism on the group of knots modulo $n$-equivalence.  It
thus factors as a composition of a degree $n-1$ invariant with a group homomorphism. It is therefore a degree $n-1$ invariant itself.
\end{proof}

In order to move to the framed case, we need the following lemma.
 For any integer $k$, let $\operatorname{fr}_k$ be the map from unframed knots to framed knots which adds a $k$-framing.
\begin{lemma}\label{lem:addframe}
Let $U_1$ represent the $+1$ framed unknot. Let $\nu$ be an additive framed knot invariant taking values in an abelian group. Then $$\nu\circ \operatorname{fr}_k=k\nu(U_1)+\nu\circ\operatorname{fr}_0.$$
\end{lemma}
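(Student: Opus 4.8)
The plan is to realize the framing-change operation $\operatorname{fr}_k$ as a connect-sum with framed unknots and then invoke additivity of $\nu$. First I would record the elementary geometric fact that changing the framing of a framed knot by $+1$ is the same as forming the connect-sum with $U_1$, the $+1$-framed unknot: since the framing (writhe) of a connect-sum is the sum of the framings and connect-summing with an unknot does not change the underlying knot type, one has $\operatorname{fr}_j(K) \cong \operatorname{fr}_{j-1}(K) \# U_1$ for every $j$. Iterating, for $k \geq 0$ the framed knot $\operatorname{fr}_k(K)$ is isotopic to $\operatorname{fr}_0(K) \# U_1^{\#k}$, where $U_1^{\#k}$ is the $k$-fold connect-sum of $U_1$ with itself (and $U_1^{\#0} = U_0$, the $0$-framed unknot).

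Since $\nu$ is additive, i.e.\ $\nu(J_1 \# J_2) = \nu(J_1) + \nu(J_2)$, applying $\nu$ to this decomposition gives $\nu(\operatorname{fr}_k(K)) = \nu(\operatorname{fr}_0(K)) + k\,\nu(U_1)$ for all $k \geq 0$, which is exactly the asserted identity in the nonnegative range. To cover negative $k$, I would first observe that $U_0$ is the unit of the connect-sum monoid, so additivity forces $\nu(U_0) = \nu(U_0 \# U_0) = 2\nu(U_0)$, hence $\nu(U_0) = 0$ in the abelian group. Since $U_1 \# U_{-1}$ is isotopic to $U_0$, this yields $\nu(U_{-1}) = -\nu(U_1)$. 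Then for $k < 0$ we have $\operatorname{fr}_k(K) \cong \operatorname{fr}_0(K) \# U_{-1}^{\#(-k)}$, so $\nu(\operatorname{fr}_k(K)) = \nu(\operatorname{fr}_0(K)) + (-k)\nu(U_{-1}) = \nu(\operatorname{fr}_0(K)) + k\,\nu(U_1)$, completing the argument.

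The only genuine content here is the geometric observation that incrementing the framing equals connect-sum with a framed unknot; everything else is a formal computation with the monoid homomorphism $\nu$. I do not expect a real obstacle: the points requiring a little care are simply the sign bookkeeping for negative $k$ and the remark that an additive invariant with values in a group automatically annihilates the $0$-framed unknot.
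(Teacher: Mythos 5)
Your proof is correct and takes essentially the same route as the paper: the paper also pushes the framing twist onto a standard subarc to write $\operatorname{fr}_k(K) = U_k \# \operatorname{fr}_0(K)$ and then separates the twists so that additivity of $\nu$ finishes the computation. You have simply spelled out the "separate the twists" step as an explicit iteration and made the sign bookkeeping for negative $k$ (via $\nu(U_0)=0$ and $\nu(U_{-1})=-\nu(U_1)$) visible, which the paper leaves implicit.
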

\begin{proof}
One can push the twisting of the framing onto a standard subarc of the knot to see that  $\operatorname{fr}_k(K)=U_k\# \operatorname{fr}_0(K)$, where $U_k$ is a $k$ framed unknot. Then one separates each of the twists, and uses the fact that $\nu$ is additive. 
\end{proof}
\begin{corollary}
Suppose that $\nu$ is an additive invariant of framed knots. If it is invariant under $C_n$-moves then it is a finite-type invariant of degree $n-1$ for $n\geq 2$.
\end{corollary}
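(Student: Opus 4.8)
The plan is to reduce the statement to the unframed case treated in Theorem~\ref{thm:hab1}, using the decomposition of a framed invariant into a ``$0$-framed part'' plus a linear framing term provided by Lemma~\ref{lem:addframe}.

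First I would set $\mu := \nu\circ\operatorname{fr}_0$, which is an additive invariant of unframed knots, and show it is invariant under $C_n$-moves. The key geometric input is that for $n\ge 2$ a $C_n$-move preserves the framing (equivalently the self-linking number): the move of Figure~\ref{fig:nmove} is supported in a ball, and the finger move across the central disk introduces crossing changes in canceling pairs, so the writhe is unchanged --- it is precisely the $C_1$-move (a crossing change) that can alter the framing. Hence $\operatorname{fr}_0$ carries a $C_n$-move between unframed knots $K$ and $K'$ to a $C_n$-move between $\operatorname{fr}_0(K)$ and $\operatorname{fr}_0(K')$, and the hypothesis that $\nu$ is $C_n$-invariant gives $\mu(K)=\nu(\operatorname{fr}_0 K)=\nu(\operatorname{fr}_0 K')=\mu(K')$. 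Theorem~\ref{thm:hab1} then shows $\mu$ is a finite-type invariant of degree $\le n-1$.

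To conclude, write $|\mathcal K|$ for the underlying unframed knot of a framed knot $\mathcal K$ and $f(\mathcal K)$ for its framing. Since every framed knot can be written as $\operatorname{fr}_{f(\mathcal K)}(|\mathcal K|)$, Lemma~\ref{lem:addframe} gives
\[
\nu(\mathcal K)\;=\;f(\mathcal K)\,\nu(U_1)\;+\;\mu(|\mathcal K|).
\]
The map $\mathcal K\mapsto\mu(|\mathcal K|)$ is $\mu$ composed with the forgetful map, and forgetting framings commutes with resolving double points, so this is a framed finite-type invariant of degree $\le n-1$; and $f$ is the standard degree-one framed finite-type invariant (under the usual convention that resolving a double point changes the self-linking number by $\pm 2$, so that $\partial^2 f=0$), whence $f\cdot\nu(U_1)$ has degree $\le 1$. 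Because $n\ge 2$ we have $1\le n-1$, so $\nu$ is a sum of two framed finite-type invariants of degree $\le n-1$ and is therefore itself of degree $\le n-1$.

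The step I expect to be the main obstacle is the framing-invariance of $C_n$-moves for $n\ge 2$ --- this is exactly where the hypothesis $n\ge 2$ enters (a $C_1$-move, being a crossing change, genuinely can change the framing), and it should be argued from the clasper description of the move rather than assumed; everything else is bookkeeping with Lemma~\ref{lem:addframe} and Theorem~\ref{thm:hab1}.
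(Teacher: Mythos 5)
Your proof follows the paper's argument essentially verbatim: decompose $\nu$ via Lemma~\ref{lem:addframe} into a framing-number term plus $\nu\circ\operatorname{fr}_0$, apply Theorem~\ref{thm:hab1} to the latter, and observe that both summands are of type $\le n-1$. The one place you go beyond the paper is in spelling out \emph{why} $\nu\circ\operatorname{fr}_0$ is $C_n$-invariant --- namely that a $C_n$-move with $n\ge 2$ preserves the framing, so $\operatorname{fr}_0$ carries a $C_n$-move on unframed knots to a $C_n$-move on framed knots --- where the paper simply asserts ``since $\nu$ is''; your identification of that as the point requiring care, and your reason for it, are correct.
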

\begin{proof}
Note that $\nu\circ\operatorname{fr}_0$ is an additive invariant of unframed knots, and that it is invariant under $C_n$ moves, since $\nu$ is. Therefore $\nu\circ\operatorname{fr}_0$ is finite-type of type $n-1$ by Theorem~\ref{thm:hab1}. On the other hand, by Lemma~\ref{lem:addframe}, $\nu(K)=\operatorname{fr}(K)\nu(U_1)+\nu\circ\operatorname{fr}_0(K),$ where $\operatorname{fr}(K)$ is the framing number. The invariant $\operatorname{fr}$ is known to be type $1$, so we have a linear combination of a type $n-1$ and type $1$ invariant, which is therefore of type $n-1$.
\end{proof}


%

This corollary gives us the main tool we need to show that $\pi_0(ev_n)$ is of finite type.  We also use deformations of evaluation maps of knots.

\begin{definition}
A \emph{$\sigma$-deformation} of (the evaluation map of)
a knot $K$ is a map obtained by precomposing the adjoint of the evaluation map $\Delta^n \times \Emb^{fr}(\R, \R^3) \to 
C_n\la \In^3, \d \ra \x O(3)^n$ with a section $\sigma : \Delta^n \to \Delta^n \times \Emb^{fr}(\R, \R^3)$ of this trivial bundle which maps into the component of $K$ in the embedding space.
\end{definition}

\begin{theorem}\label{n-moves}
The map $\pi_0(ev_n)$ is invariant under $C_n$-moves, and therefore is a type $n-1$ invariant.
\end{theorem}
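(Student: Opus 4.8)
The plan is to dispatch the ``therefore'' clause first, as it is a formality. By Proposition~\ref{EvaluationIsC1Map}, Corollary~\ref{CommutativeMultOnAM} and Theorem~\ref{Pi0IsGroup} (the last transported to $AM_n^{fr}$ along the equivalence $\iota$ of Proposition~\ref{IotaHomotopyEquivalence} via Proposition~\ref{CompatibleMultiplications}), the map $\pi_0(ev_n)$ is an additive invariant of framed knots with values in an abelian group. Hence, once $\pi_0(ev_n)$ is known to be invariant under $C_n$-moves, the Corollary following Lemma~\ref{lem:addframe} promotes it to a finite-type invariant of degree $n-1$ (for $n\geq 2$; the case $n=1$ is degenerate and handled directly). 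So all of the content is in showing that $\pi_0(ev_n)$ is invariant under a single $C_n$-move $K\to K'$.

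First I would localize. As in the proof of Theorem~\ref{thm:hab1}, the topological IHX relation lets us assume the clasper is linear, so the move is supported in a ball $B$ in which $K$ meets $B$ in exactly $n+1$ disjoint arcs --- the $n$ leaf arcs $a_1,\dots,a_n$ together with the arc $b$ that is slid across the central disk --- while $K=K'$ away from $B$ and, inside $B$, only $b$ is moved. Choosing the arcs so that their preimages $J_1,\dots,J_n,J_b\subset\In$ have pairwise disjoint closures, two observations are immediate. \emph{Locality}: since $C_n^{fr}\la\In^3,\d\ra$ records only the locations of the sampled points, their pairwise directions, and their framings, one has $ev_n(K)(\vec t)=ev_n(K')(\vec t)$ for every $\vec t\in\Delta^n$ no coordinate of which lies in $\bar J_b$. \emph{Pigeonhole}: the $n$ sampled points meet at most $n$ of the $n+1$ closed intervals $\bar J_1,\dots,\bar J_n,\bar J_b$, so for every $\vec t$ at least one of these arcs is unoccupied; in particular the open sets $R_\alpha=\{\vec t:\vec t\cap\bar J_\alpha=\emptyset\}$, $\alpha\in\{1,\dots,n,b\}$, cover $\Delta^n$, and the $R_j$ with $j$ a leaf index cover $\Delta^n\setminus R_b$.

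Next I would build the homotopy using $\sigma$-deformations. The goal is to produce a $\sigma$-deformation $\phi$ of $ev_n(K)$ and a $\sigma$-deformation $\phi'$ of $ev_n(K')$ that are equal in $AM_n^{fr}$; since a $\sigma$-deformation of a knot's evaluation map always lies in the same component of $AM_n^{fr}$ (the space of sections of $\Delta^n\times\Emb^{fr}(\R,\R^3)\to\Delta^n$ into a fixed component of the knot space is path-connected, and the composite lands in $AM_n^{fr}$ by the aligned-map discussion of Section~\ref{TheMappingSpaceModel}), this would give $[ev_n(K)]=[\phi]=[\phi']=[ev_n(K')]$. To produce $\phi,\phi'$: realize the $C_n$-move as a path $K_s$, $s\in[0,1]$, of maps $\R\to\R^3$ that is an embedding except at finitely many times where $b$ passes a leaf strand and a single transverse double point appears. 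Away from the locus where two sampled points reach such a double point, $(s,\vec t)\mapsto ev_n(K_s)(\vec t)$ is continuous into $C_n^{fr}\la\In^3,\d\ra$ and connects $ev_n(K)(\vec t)$ to $ev_n(K')(\vec t)$. Because these double points lie on the $n+1$ arcs of the move, and (by the pigeonhole and locality) the $n$ sampled points cannot simultaneously meet all of them, one should be able --- using the freedom of passing to $\sigma$-deformations (equivalently, applying a $\vec t$-dependent ambient isotopy, compatible on the cover $\{R_\alpha\}$ of $\Delta^n$, with $R_b$ giving the already-constant region) --- to keep the sampled points off the double-point loci for all $\vec t$, thereby assembling a homotopy from $ev_n(K)$ to $ev_n(K')$ inside $AM_n^{fr}$.

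I expect this last step to be the real obstacle: turning the pigeonhole heuristic into an honest \emph{global} construction --- choosing the perturbing isotopies continuously over all of $\Delta^n$ (not merely on a generic slice), compatibly with the aligned-map conditions on the faces of $\Delta^n$ and with the collisions that occur near $\partial J_\alpha$, and correctly handling the interaction of the slide with \emph{all} of the leaves rather than just one unoccupied one. This is where the linear-clasper normal form, the precise choice of the ball $B$ and of the intervals $J_\alpha$, and the bookkeeping of which arcs carry sampled points would all have to be used carefully; it is the ``getting off the ground'' difficulty, of a piece with the rest of the paper.
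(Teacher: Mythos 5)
You have the right framework and the right heuristic, and you have correctly identified where the real work lies --- but that work is precisely what the proof consists of, so there is a genuine gap. Your setup matches the paper's: reduce to a local picture, observe by pigeonhole that the $n$ sampled points cannot simultaneously occupy all $n$ leaf arcs and the sliding arc, note that when no point is on the sliding arc the two evaluations already agree, and try to build $\sigma$-deformations so that the slide across the central disk never produces a collision among sampled points. What you call ``turning the pigeonhole heuristic into an honest global construction'' is not a loose end; it is the theorem.

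The paper fills this gap with an explicit, recursively defined family of re-embeddings of $E_n$ parametrized by $C_n\langle\mathbb{I},\partial\rangle$, governed by a ``tug'' mechanism. Each configuration point on an arc of a nested $E_i$ pulls the clasp $D_i$ toward its end of the cylinder, with pull strength increasing as the point approaches the midpoint of its arc, and --- crucially --- the collective pull exerted by an inner copy $E_{i-1}$ is \emph{halved} relative to the pull of a point on the outer arc $\alpha_i$. This calibration guarantees a uniform $\varepsilon$ such that if any arc $\alpha_i$ is unoccupied, every sampled point in $E_{i-1}$ stays within $\varepsilon$ of its end disk, and hence never crosses the central disk $D^2\times\{1/2\}$ of $E_n$; one then propagates this outward across $\alpha_{i+1},\dots,\alpha_n$. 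The halving is the nonobvious ingredient --- without it an inner configuration could overpower an occupied outer arc and drag the clasp into the path of the slide. Your sketch, by contrast, proposes to choose ``perturbing isotopies on the cover $\{R_\alpha\}$'' of $\Delta^n$, but does not address how to glue these compatibly where the $R_\alpha$ overlap, nor how to control the interactions across the nested levels of $E_n$; a naive partition-of-unity patching would not automatically keep all points off the central disk simultaneously. So the proposal is a correct outline with the central construction missing, rather than a complete alternative argument.

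One small point worth keeping: your observation that a $\sigma$-deformation of $ev_n(K)$ always lies in the component of $ev_n(K)$ in $AM_n^{fr}$ --- because the section space into a fixed component of $\Emb^{fr}(\mathbb{R},\mathbb{R}^3)$ is path-connected --- is implicit in the paper but good to make explicit, and your reduction of the ``therefore'' clause via additivity, the abelian group structure, and the corollary to Lemma~\ref{lem:addframe} matches the paper exactly.
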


\begin{proof}
Let $K$ be a framed knot, and let $K'$ be the knot after the $C_n$-move has been applied. Our strategy is to find $\sigma$-deformations of
$K$ and $K'$ which we can then show are homotopic.   These $\sigma$-deformations 
will have the property that none of the configuration points in $ev_n(\sigma(\vec{t}))$ meet the central disk 
whenever stricly less than $n$ of those configuration points in $ev_n(\sigma(\vec{t}))$ lie on the subarcs of $E_n$. 
This implies that the $\sigma$-deformations are homotopic, since we can just push the arc across the central disk of $E_n$ without ever introducing collisions of configuration points. This is by design for points in $\Delta^n$ for which $n - 1$ points or less are inside $E_n$.  If there are $n$ points in $E_n$, 
that means there is no point left over to lie in the exterior arc that we are homotoping.

Both of our $\sigma$-deformations will isotop only the copies of $E_n$ in $K$ and $K'$.  
Consider a nested copy of $E_i$ inside $E_n$. It consists of an arc $\alpha_i$ clasping with a copy of $E_{i-1}$.
Let $D_i$ be the (pair of) space(s) given by the intersection of $E_i$ with a small neighborhood $D^2 \x (-\delta, \delta)$ of the central disk $D^2 \x \{0\}$, where $\delta$ is large enough so that there is a homeomorphism of pairs $E_i \cong D_i$.  
The isotopies of $E_i$ will slide $D_i$ along $E_i$, i.e., they will take $D_i$ into $D^2 \x (a,b)$ for some $(a,b)\subset [-1,1]$.  Roughly, $(a,b)$ will be a small interval near either $-1$ or $1$ according as which of these endpoints is closer to the center of mass of the configuration points.
More specifically, a set of configuration points in the arc $\alpha_i$ will pull $D_i$ toward that side of $D^2 \x [-1,1]$ in a manner that increases 
as the minimum distance of these points to the midpoint of the arc $\alpha_i$ decreases.  Configuration points in the copy of $E_{i-1}$ exert a similar tug 
 to their end of $E_i$ in a manner which increases as they get closer to the midpoints of their arcs. However the tug of a point is halved in magnitude when you pass to $E_{i-1}$. This has been set up so that a point at the midpoint of $\alpha_i$ will always exert a tug that equals or exceeds the collective tug of $E_{i-1}$.

We also set things up so that if $E_{i-1}$ has fewer than $i-1$ configuration points on it, then configuration points in $\alpha_i$ can never get more than $\eps$ away from their end disk $D^2 \x 0$. The point is that any configuration points in $\alpha_i$ will have a greater tug on the clasp than $E_{i-1}$ when one is at the midpoint of $\alpha_i$. We just ensure that this tug is strong enough to pull $\alpha_i$ $\eps$-close to its end disk. (We can come up with a uniform $\eps$ in this way since there is a discrete gap between the maximal tug that $E_{i-1}$ can exert and the tug it exerts at less than full occupancy.)

Similarly arrange that if the arc $\alpha_i$ has no configuration points, then no configuration points in $E_{i-1}$ can get more than $\eps$ away from $D^2 \x 1$ inside $E_i$.

Ultimately, we have an isotopy of $E_n$ parametrized by $C_n\la \In, \d\ra$, or equivalently a reembedding of $E_n$ for each $(t_1,...,t_n) \in C_n\la \In, \d\ra$.  See Figure \ref{C4MoveExample} for an example.

\begin{figure}[h!]
\includegraphics[scale=0.6]{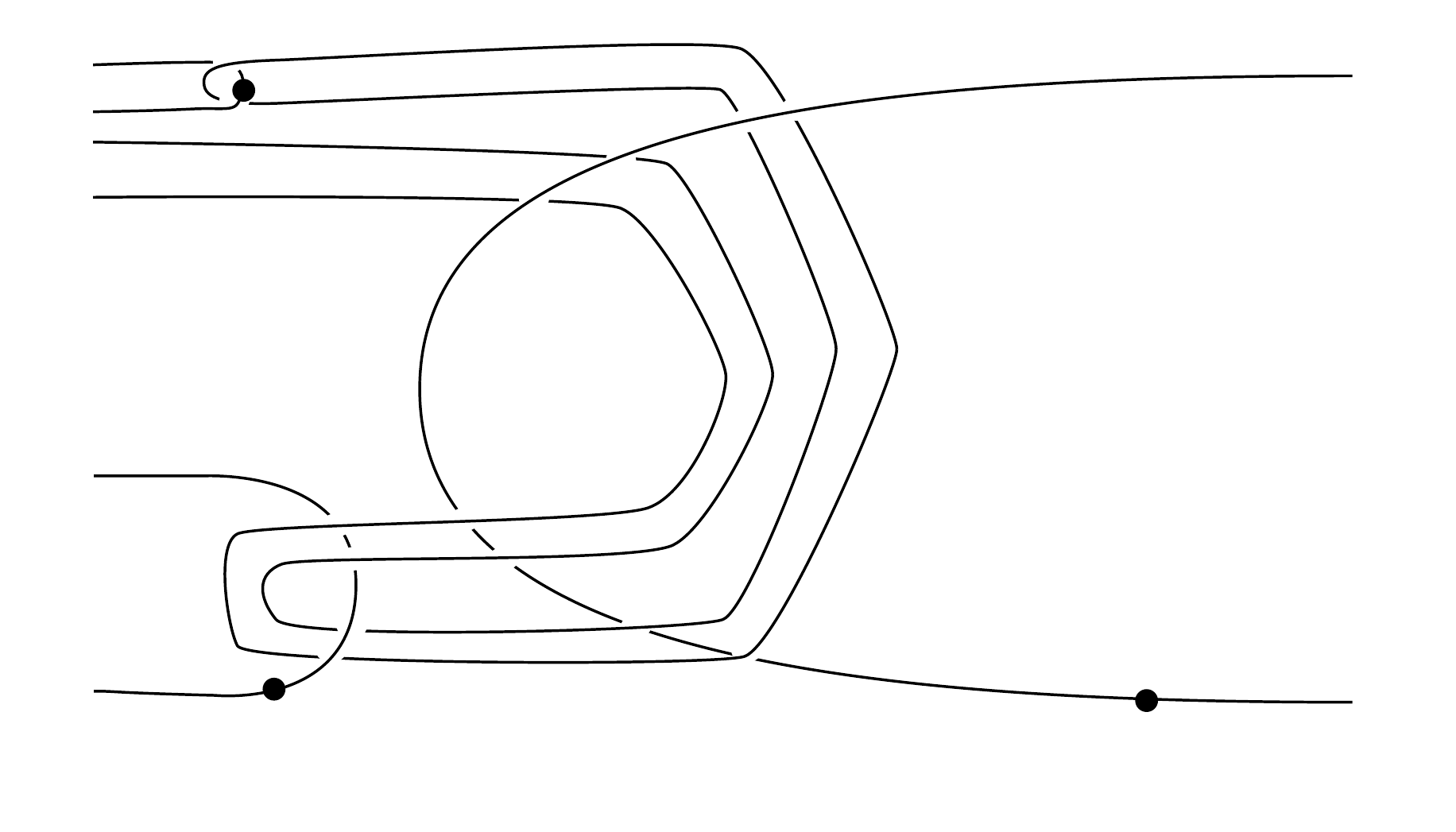}
\caption{An example of the reembedding of $E_4$ resulting from a point $(t_1,...,t_4) \in C_4\la \In, \d\ra$, whose image under $ev_4$ is also shown.  (The image of one of the $t_i$ is not in $E_4$.)
The two left-hand configuration points drag their respective arcs toward their ends of their cylinders.  The right-hand configuration point is partially balancing their tug on the central clasp.  If the right-hand point moved toward the middle of its arc, the central clasp of $E_4$ would get tugged close to the right-hand end of the cylinder, although the other configuration points would still be close to the left-hand end.  A point added to the arc without a point on it would 
bring $E_4$ closer to the original embedding
as it got closer to the midpoint of the arc.}
\label{C4MoveExample}
\end{figure}

With such a family of reembeddings, we claim that no configuration point ever passes through the $D^2 \x 1/2$ disk of $E_n$, provided that strictly less than $n$ configuration points are present inside $E_n$. Let the arcs in $E_n$ be called $\alpha_1$, $\alpha_2$, ... arranged in order of 
decreasing depth.

We know that there is some arc $\alpha_i$ which is not occupied by a configuration point.  The points in $E_{i-1}$ stay within $\eps$ of the end disk and cannot cross the center disk of $E_n$. Thus any points that do cross the center must lie on one of the arcs 
$\alpha_{i+1}$, ..., $\alpha_{n}$. 
Now consider such an arc $\alpha_j$, 
$j > i$.
It links with $E_{j-1}$ which has less than full occupancy.  Points in $\alpha_j$ cannot get further than $\eps$ away from their end disk, and cannot cross the center disk of $E_n$, so we are done!
\end{proof}

\section{The homotopy spectral sequence for the tower and finite-type knot theory}
\label{SpectralSequence}

In this section we further develop the spectral sequence for the homotopy groups and in particular the components
of the stages in the Goodwillie--Weiss tower for classical knots and its relationship - both established and conjectural - with finite-type knot 
theory.  Such analysis for knots in higher-dimensional Euclidean space,
which are connected,  has  been covered elsewhere, starting in \cite{SinhaTop}.
We see that at the $E^2$ stage the entries of the spectral sequence are
 exactly what one would expect if the tower is to serve as a universal additive finite-type invariant.
We in particular see similar structures to what Goodwillie and Weiss \cite[Section 5]{GW} 
originally saw in higher dimensions, but 
can also compare that to newer results on the combinatorics of finite-type invariants \cite{Conant}.

A priori the spectral sequence of a totalization tower, or any other tower of fibrations, is difficult to discern in degree zero.  Not only
is $\pi_0$ only a set-valued functor, but homotopy groups can differ over different components.  We saw in 
Section~\ref{PropertiesOfProjMaps} however that this tower has additional algebraic structure, which leads to the following.

\begin{theorem}
\label{SSofAbGps}
The spectral sequence for the homotopy groups, and in particular components, of $AM^{fr}_n$ as a stage in the Goodwillie--Weiss 
tower is a spectral sequence of abelian groups which converges.
\end{theorem}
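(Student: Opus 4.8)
The plan is to leverage the $\C_1$-action machinery and group structure already established for the entries of the tower, pushing it from the level of $\pi_0$ all the way up to the full homotopy spectral sequence. First I would recall the setup: the spectral sequence in question is the homotopy spectral sequence associated to the tower of fibrations $\cdots \to AM_n^{fr} \to AM_{n-1}^{fr} \to \cdots$, whose $E^1$-term is built from the homotopy groups of the layers $L_n$ and whose abutment is the homotopy groups (and, in total degree zero, the components) of the homotopy limit. By Corollary~\ref{C1Fibration}, each stage of the tower is a fibration sequence of $\C_1$-spaces, $L_n \to AM_n^{fr} \to AM_{n-1}^{fr}(\Delta^n)$. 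A $\C_1$-action makes each of these spaces into a homotopy-associative, homotopy-commutative $H$-space, so in particular the basepoint components are simple spaces (nilpotent, in fact), and all the higher homotopy groups carry a natural abelian group structure with trivial $\pi_1$-action. This removes the two a priori obstacles mentioned just before the theorem: $\pi_0$ is no longer merely a set (by Theorem~\ref{Pi0IsGroup} and its $AM_n^{fr}$-analogue via $\iota$ it is an abelian group), and the homotopy groups do not vary from component to component (by the corollary following Theorem~\ref{Pi0IsGroup}, the fibers over different components are equivalent).

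Next I would argue that the spectral sequence is one of abelian groups from $E^1$ onward. The $H$-space structures on the $AM_n^{fr}$ are compatible with the projection maps $p_n$ (Proposition~\ref{ProjectionIsC1Map}, and the $\widehat{s^n}$-model which is homotopic to it), so the whole tower is a tower of $H$-spaces and $H$-maps; equivalently, after passing to a fibrant-enough cosimplicial model, it is the totalization tower of a cosimplicial object in (grouplike, homotopy-commutative) $H$-spaces. The standard construction of the homotopy spectral sequence of such a tower then yields abelian groups in every entry: the $E^1$-term is $\pi_*$ of the layers $L_n$, which are $n$-fold loop spaces by Proposition~\ref{CubicalFibers} and Lemma~\ref{LoopspaceStructuresAgree}, hence have abelian homotopy groups even in degree zero; and all differentials are homomorphisms because they are induced by maps of $H$-spaces. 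In total degree zero specifically, the relevant entries are $\pi_0 L_n \cong \pi_n(\tfib)$ of the associated cube of configuration spaces, which are abelian by the loop-space structure, and the filtration quotients of $\pi_0 AM_n^{fr}$ are subquotients of these, consistent with the abelian group structure on $\pi_0 AM_n^{fr}$ itself. I would also note that in total degree one the same reasoning applies since $\pi_1$ of an $H$-space is abelian.

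For convergence, the key point is that the tower is finite at each stage in the sense relevant here: $AM_n^{fr}$ is literally the homotopy limit of the finite subcubical diagram indexed by $\P_\nu[n]$, so there is no limit or $\lim^1$ subtlety of the kind that plagues Vassiliev's original spectral sequence (as emphasized in the introduction, citing \cite{Giusti}). Concretely, for the spectral sequence computing $\pi_*(\holim) = \pi_*(AM^{fr})$ (or the finite truncations), the tower $\{AM_n^{fr}\}$ is a tower under $AM^{fr}$, each map $AM_n^{fr} \to AM_{n-1}^{fr}$ is a fibration, and for any fixed total degree $q$ only finitely many layers contribute, so the spectral sequence converges strongly. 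Since by Theorems~\ref{SurjOnPi0Framed} and \ref{Pi0IsGroup} each $AM_n^{fr}$ has abelian-group-valued $\pi_0$ and the projection maps are surjective on $\pi_0$, the exact couple is an exact couple of abelian groups and the usual convergence criterion applies.

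The main obstacle I expect is the bookkeeping needed to make ``spectral sequence of the tower of $H$-spaces'' precise at the level of the non-fibrant cosimplicial and subcubical models actually in use: one must check that the $\C_1$-structures, which are only defined and only compatible \emph{up to homotopy} across the various models ($AM_n$ versus $\AM_n$, the $p_n$ versus $\widehat{s^n}$ descriptions of the tower maps), assemble to genuine additive structure on the spectral sequence rather than merely on each page separately. The way around this is exactly the content already assembled in Section~\ref{PropertiesOfProjMaps}: the loop-space descriptions of the layers (Proposition~\ref{CubicalFibers}) give honest (not just up-to-homotopy) abelian group structure on the $E^1$-term in the relevant range, Lemma~\ref{LoopspaceStructuresAgree} identifies this with the multiplicative structure, and the naturality of the cubical constructions propagates it through the differentials. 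So the proof is essentially an assembly of the pieces built in the previous section, with the only real work being to state carefully which range of total degrees the argument covers (total degrees zero and one, as in the theorem statement, where everything reduces to $\pi_0$ and $\pi_1$ of $H$-spaces and their loop spaces).
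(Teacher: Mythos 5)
Your proposal assembles the right ingredients — the $\C_1$-structure on the spaces and on the fibration sequences, the loop-space description of the layers, the finiteness of the truncated tower — but it has a gap at the crucial step, namely why the spectral sequence is one of abelian groups in degrees zero and one.

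The issue is your assertion that ``all differentials are homomorphisms because they are induced by maps of $H$-spaces.'' The differentials in the homotopy spectral sequence of a tower are not simply induced by maps of spaces: they are built from the exact couple obtained by splicing the long exact sequences of the fibrations $L_n \to AM_n^{fr} \to AM_{n-1}^{fr}$, and in particular involve the \emph{connecting} maps $\partial\colon \pi_{s+1}(AM_{n-1}^{fr}) \to \pi_s(L_n)$. In low degrees ($s=0$, and even $s=1$) these connecting maps are a priori just maps of pointed sets or of not-obviously-abelian groups, and showing them to be compatible with the $H$-space (or $\C_1$-) structures is exactly the technical content the theorem requires. Noting that the inclusions $L_n \to AM_n^{fr}$ are $\C_1$-maps (Proposition~\ref{InclusionIsC1}) and the projections are $\C_1$-maps (Corollary~\ref{C1Fibration}) handles the two ``easy'' legs of each exact sequence but says nothing directly about~$\partial$.

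The paper closes this gap with a delooping argument that your proposal does not make. Since the fibration sequence~(\ref{layersequence}) consists of \emph{group-like} $\C_1$-spaces and $\C_1$-maps, it may be realized as the loop space of a fibration sequence of classifying spaces. The long exact sequence in homotopy of the original fibration is then, degree by degree, the long exact sequence of the delooped fibration shifted down by one: $\pi_0$ of the original spaces is $\pi_1$ of the classifying spaces, $\pi_1$ is $\pi_2$, and so on. All the boundary maps in the shifted sequence now live in degrees~$\geq 2$ on the classifying-space side, where everything is automatically an abelian group and every map (including $\partial$) is a homomorphism. Splicing these produces the exact couple, and hence the spectral sequence, as one of abelian groups with no further argument needed. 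You should replace the hand-wave about ``induced by maps of $H$-spaces'' with this delooping step, after which the rest of your outline (convergence from finiteness of the truncated tower, together with the finite generation coming from Proposition~\ref{E1Term}) matches the paper.
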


\begin{proof}
By Corollary~\ref{C1Fibration} and  Theorem~\ref{Pi0IsGroup} the fiber sequence
\begin{equation}\label{layersequence}
L_n \to AM^{fr}_n \to AM^{fr}_{n-1}
\end{equation}
is a fibration sequence of group-like $\C_1$-spaces.  It is thus loops on the fibration sequence defined on their classifying spaces.
Its long exact sequence in homotopy groups is then a degree shift of that for the classifying spaces, starting with $\pi_1$ of the classifying spaces being $\pi_0$ of these spaces.  These exact sequences can be spliced in the usual way to obtain a spectral sequence, which by Corollary~\ref{Pi0IsGroup} is one of abelian groups.

Convergence follows from the fact that this tower is finite when we truncate it at 
$AM^{fr}_n$, and the groups are finitely generated by Proposition~\ref{E1Term} below along with the fact that homotopy groups of spheres are finitely generated.
\end{proof}

We can now analyze the spectral sequence in further detail.  By Proposition \ref{CubicalFibers}, 
$L_n$ is equivalent to $\Omega^n$ of the total fiber of the $n$-cube $C^{fr}_\bullet \langle \In^3, \d \rangle \circ {\mathcal{G}}^{!}_{n}$.  
Unraveling definitions, this is an $n$-cube whose entries are spaces of configurations of at most $n$ points, together with at most $n$ frames, where each map in the cube forgets a point and a frame.  We can express this cube as an entry-wise product of the cubes $S \mapsto C_{\underline{n} \setminus S} \la \In^3, \d\ra$ and $S \mapsto O(3)^{\underline{n} \setminus S}$.  

The total fiber of the product is the product of the total fibers, and for $n\geq 2$, the total fiber of the cube of $O(3)$'s is a point.  Thus it suffices to consider the total fiber of $S \mapsto C_{\underline{n} \setminus S} \la \In^3, \d\ra$.
Furthermore, we can switch to open configuration spaces, for which these forgetting maps are well known to be fibrations.  We take fibers in one direction and consider the resulting $(n-1)$-cube instead.  
Here we see the entries as $\In^3$ with a finite set of points removed, and maps which are inclusions.
Up to homotopy, this is a cube of spaces $\bigvee_T S^2$ indexed by subsets $T \subset \underline{n-1}$, 
where each map projects off a wedge factor.  Call this cube $\mathcal{P}(\underline{n-1}) \left( \bigvee S^2 \right)$.

By Hilton's Theorem \cite{Hilton}, the homotopy groups of a wedge $\bigvee_{n} S^2$ is a direct sum of homotopy groups of higher-dimensional spheres.  To elaborate, let $\mathcal{L}_{n}$ be the free graded Lie algebra (working over the integers for the rest of this section) on $n$ odd-graded generators in degree one. Let $\mathcal{B}_{n, full}$ be a basis for $\mathcal{L}_{n}$, choosing these consistently as $n$ varies.  For example, we could use a graded version of Hall bases.  

 Hilton's Theorem states that $\pi_* (\bigvee_{n} S^2)$ is a direct sum $\bigoplus_{W \in \mathcal{B}_{n,full}} \pi_* S^{|W| - 1}$, where $W$ is the degree or word length of $W$.
The theorem is functorial if we use bases for free Lie algebras of different ranks
which extend one another, since the Whitehead products used to define the elements of homotopy are functorial. Because the
projection maps between wedge products of $S^2$ are split,  these different bases split off.  An immediate inductive calculation 
of the homotopy groups of the total fiber (as an iterated fiber of fibers) shows that what is left for homotopy groups
is indexed by a basis $\mathcal{B}_n$ of the submodule of the
$\mathcal{L}_{n}$ spanned by brackets in which all generators occur.

\begin{proposition}\label{E1Term}
The spectral sequence for the homotopy groups (including $\pi_0$) of $AM_n$ has as $E^1_{-p, *}$ the module 
$\bigoplus_{W \in \mathcal{B}_{p-1}} \pi_* S^{|W| - 1}$.
\qed
\end{proposition}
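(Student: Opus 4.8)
The plan is to identify the $E^1$-page column by column with the homotopy of the layers of the Goodwillie--Weiss tower, and then to evaluate those homotopy groups using the cubical description of the layers obtained above together with Hilton's theorem. By Theorem~\ref{SSofAbGps} the spectral sequence is assembled from the long exact sequences of the fibration sequences $L_p \to AM^{fr}_p \to AM^{fr}_{p-1}$ (and the corresponding sequences for $AM_n$, whose layers agree with these for $p\geq 2$), so the module in filtration $-p$ is built from the homotopy of $L_p$; by Proposition~\ref{CubicalFibers} this is equivalently the homotopy of the total fiber $T_p$ of the $p$-cube $C^{fr}_\bullet\la\In^3,\d\ra\circ\mathcal{G}^{!}_p$, via $L_p\simeq\Omega^p T_p$. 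The bottom of the tower is checked directly, so it suffices to compute $\pi_*(T_p)$.

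First I would run the reductions indicated before the statement. The $p$-cube $C^{fr}_\bullet\la\In^3,\d\ra\circ\mathcal{G}^{!}_p$ is the entrywise product of the cube $S\mapsto C_{\underline{p}\setminus S}\la\In^3,\d\ra$ with the cube $S\mapsto O(3)^{\underline{p}\setminus S}$; since the total fiber of a product of cubes is the product of the total fibers, and the total fiber of the $O(3)$-cube is contractible once $p\geq 2$, we get $T_p\simeq\tfib$ of the configuration-space cube. Replacing this by the equivalent cube of open configuration spaces $C_m(\In^3)$ (whose forgetting maps are fibrations) and taking homotopy fibers in one coordinate direction produces a $(p-1)$-cube whose entries are copies of $\In^3$ with finitely many points removed and whose maps are inclusions; up to homotopy this is $\mathcal{P}(\underline{p-1})(\bigvee S^2)$, the cube of wedges of $2$-spheres in which each map projects off a wedge summand. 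Hence $T_p\simeq\tfib\big(\mathcal{P}(\underline{p-1})(\bigvee S^2)\big)$.

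Next I would compute $\pi_*$ of this total fiber. Fixing a basis $\mathcal{B}_{m,full}$ of the free graded Lie algebra $\mathcal{L}_m$ on $m$ degree-one odd generators, chosen compatibly as $m$ grows (for instance a graded Hall basis), Hilton's theorem \cite{Hilton} gives $\pi_*(\bigvee_m S^2)\cong\bigoplus_{W\in\mathcal{B}_{m,full}}\pi_* S^{|W|-1}$, naturally in $m$ because the defining iterated Whitehead products are natural. The wedge-summand projections in the cube are split and, under compatible choices of bases, split off precisely the summands indexed by brackets supported on the retained wedge factors. Computing $\tfib$ as an iterated homotopy fiber then cancels, one direction at a time, exactly the summands indexed by brackets that omit some generator, leaving $\pi_*\big(\tfib\,\mathcal{P}(\underline{p-1})(\bigvee S^2)\big)\cong\bigoplus_{W\in\mathcal{B}_{p-1}}\pi_* S^{|W|-1}$, where $\mathcal{B}_{p-1}$ is the resulting basis of the submodule of $\mathcal{L}_{p-1}$ spanned by brackets in which every generator occurs. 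Inserting this into the first paragraph yields $E^1_{-p,*}\cong\bigoplus_{W\in\mathcal{B}_{p-1}}\pi_* S^{|W|-1}$; in particular each such group is finitely generated, since there are finitely many brackets of each word length and homotopy groups of spheres are finitely generated --- the input needed for the convergence clause of Theorem~\ref{SSofAbGps}.

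The main obstacle is the naturality bookkeeping in the last step: one must choose the bases $\mathcal{B}_{m,full}$ so that they extend one another along every face inclusion of the cube, confirm that under these choices the wedge projections split off exactly the corresponding Hilton summands, and then verify that the iterated-fiber computation really isolates the ``all generators used'' part with no stray contributions in the low degrees relevant to $\pi_0$. The geometric input --- the cubical model for the layers, the triviality of the frame cube, and the identification of punctured $\In^3$ with a wedge of $2$-spheres --- is already in place in the discussion preceding the statement.
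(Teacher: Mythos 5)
Your argument traces the paper's own derivation step by step: identify $E^1_{-p,*}$ with $\pi_*(L_p)$ via the layers of the tower, use Proposition~\ref{CubicalFibers} to pass to the total fiber of the cube $C^{fr}_\bullet\la\In^3,\d\ra\circ\mathcal{G}^!_p$, split that cube entrywise and discard the contractible total fiber of the frame factor, switch to open configuration spaces and take one round of fibers to land on the $(p-1)$-cube $\mathcal{P}(\underline{p-1})(\bigvee S^2)$, and finish with the functorial Hilton decomposition to isolate the summands indexed by brackets using every generator. This is exactly the content of the paragraph the paper places immediately before the proposition (which carries the $\qed$ with no separate proof environment), so the proposal is correct and takes essentially the same route, with the added bookkeeping about compatible Hall bases and the framed/unframed comparison made a bit more explicit.
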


By Lemma \ref{LoopspaceStructuresAgree}, the abelian group structure in the spectral sequence 
agrees with the usual abelian group structure on the homotopy groups of spheres.

We now focus on total degree zero.  
Let $\mathcal{A}^I_n$ be the $\Z$-module of chord diagrams on a line segment with $n$ chords, modulo the usual four-term relation 4T and the relation SEP, which sets every separated (i.e. non-primitive) diagram to zero.  Alternatively, $\mathcal{A}^I_n$ is the $\Z$-module of trivalent diagrams, modulo antisymmetry, the IHX relation, and SEP.  See \cite{Conant} for more details.

\begin{theorem}\label{E2Comp}
The group $E^2_{-n,n}$ is isomorphic to $\mathcal{A}^I_{n-1}$.
\end{theorem}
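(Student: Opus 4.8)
The plan is to recognize the total-degree-zero row of the $E^1$-page, together with its $d^1$-differential, as a chain complex whose integral homology is computed in \cite{Conant}, and then to quote that computation.

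\emph{Reduction to a combinatorial complex.} First I would reduce to the unframed, cosimplicial setting. For $n \geq 2$ the cube of frames has a contractible total fiber, as observed just before Proposition~\ref{E1Term}, so the layer $L_n$, the column $E^1_{-n,*}$, and the $d^1$-differentials into and out of it agree with those of the unframed tower; and by the cofinality of $\mathcal{G}_n$ \cite{SinhaTop} the model $AM_n$ has the same $\widetilde{\mathrm{Tot}}$-tower, hence the same homotopy spectral sequence, as the cosimplicial space $\widetilde{C}'_\bullet \la \R^3 \ra$. By Proposition~\ref{E1Term} the relevant $E^1$-groups are direct sums of homotopy groups of spheres indexed by the basis $\mathcal{B}_{p-1}$ of the ``all generators occur'' submodule of the free graded Lie algebra $\mathcal{L}_{p-1}$; in total degree zero only a bounded window of word lengths contributes, the top brackets contributing free summands and the shorter ones contributing stable-range homotopy of spheres. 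The differential $d^1$ is induced by the cosimplicial coface maps, i.e.\ by the doubling maps $x \mapsto x \circ_i \mu$: doubling the $i$-th point replaces the $i$-th Hilton generator $x_i$ by a sum $x_i' + x_i''$, so it acts on a Lie word by the corresponding Leibniz/comultiplication expansion. Thus the total-degree-zero row of $(E^1, d^1)$ is, up to reindexing, the integral version of the ``one-dimensional embedding complex'' of Scannell and Sinha with its cabling differential, as studied in \cite{Conant}.

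\emph{Invoking \cite{Conant}.} I would then quote Conant's computation of the homology of this complex: in the total-degree-zero row it is the $\Z$-module of trivalent diagrams modulo antisymmetry and IHX, equivalently chord diagrams modulo 4T, further divided by the relation SEP killing separated diagrams. The SEP relation is forced because we are computing the \emph{total} fiber of the $n$-cube $C^{fr}_\bullet \la \In^3, \partial \ra \circ \mathcal{G}_n^!$, i.e.\ the genuinely new information at stage $n$, and separated (non-primitive) diagrams are exactly those already visible at earlier stages of the tower. Matching the indexing --- stage $n$ sees diagrams with $n-1$ chords --- the homology of $(E^1, d^1)$ at the spot $E^1_{-n,n}$, which is $E^2_{-n,n}$, is therefore $\mathcal{A}^I_{n-1}$.

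\emph{Main obstacle.} The crux is the differential identification in the second step. Proposition~\ref{E1Term} produces $E^1$ only abstractly, via Hilton splittings of total fibers of cubes of wedges of $2$-spheres, whereas \cite{Conant} works with an explicitly defined cabling differential on the embedding complex; one must check that these two descriptions of $d^1$ coincide, and in particular that the torsion summands coming from the unstable homotopy of spheres --- together with the exceptional free summand in $\pi_3 S^2$ --- sit inside the total-degree-zero row exactly as in \cite{Conant}, so that they do not perturb the homology there. Granting this identification, the theorem is Conant's calculation.
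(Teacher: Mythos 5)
Your proposal follows the same outline as the paper's proof: use Proposition~\ref{E1Term} to identify $E^1_{-n,n}$ with $\mathcal{L}ie(n-1)$, recognize $d^1$ as (the integral version of) the Scannell--Sinha cabling differential, and invoke \cite{Conant} for the cokernel. But you have explicitly flagged --- and left unresolved --- the step that actually carries the weight, namely the identification of the integral $d^1$ with the differential whose homology \cite{Conant} computes. The paper closes this gap in two parts. First, the torsion question is handled by a one-line observation: the target $E^1_{-n,n}\cong\mathcal{L}ie(n-1)$ is a \emph{free} $\Z$-module of rank $(n-1)!$, so $d^1$ automatically vanishes on the $\Z/2$-summand $\mathcal{L}ie(n-1)\otimes\Z/2$ of its source (coming from $\eta$-composites of Whitehead products), and one need only track the free summand. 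Second, to see that the free-summand differential is given integrally by the Scannell--Sinha formulas, the paper does not argue via Leibniz expansion of doubling maps inside the cube of wedges of $2$-spheres as you sketch; instead it maps the total fiber of $\P(\underline{n-1})(\bigvee S^2)$ to the configuration space $C_n(\R^3)$ and uses the split tower of fibrations $C_n(\R^d)\to\cdots\to C_0(\R^d)$ from \cite{Scannell-Sinha}, where the Whitehead brackets in the classes $b_{ij}$ are honest integral generators of split-off free summands. That is what lets them import the rational formulas integrally, which is the substance you deferred. (A minor further quibble: in total degree zero only the word length $n-1$ contributes, not a ``window'' including shorter words, since the Hilton summands must use all $n-1$ generators; and your heuristic that SEP arises ``because we take total fibers'' is not part of the argument --- SEP is simply part of Conant's answer.)
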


\begin{proof}
 By Proposition~\ref{E1Term} the group $E^1_{-n,n}$  is isomorphic to the submodule 
 of the free Lie algebra on $n-1$ generators generated by $(n-1)$-fold brackets where each generator appears exactly once.  
 This module is $\mathcal{L}ie(n-1)$, the $(n-1)$-st space in the Lie operad, which well known to be free of rank $(n-1)!$.  
  
Next, we consider the 1-line of the $E^1$ page.  Under the identification of Proposition~\ref{E1Term} these groups decompose
into a free summand and two-torsion.  The free summands are indexed by $n$-fold brackets in the free Lie algebra on $n-1$ generators,
again in which all generators occur.  The two-torsion summands occur as composites of $S^{n} \overset{\eta}{\to} S^{n-1}$ with $(n-1)$-fold 
Whitehead products from $S^{n-1} \to \mathcal{P}(\underline{n-1}) \left( \bigvee S^2 \right)$.  This summand is this isomorphic to 
$\mathcal{L}ie(n-1) \otimes \Z/2$.

The differential $d_1$ must be zero on the torsion summand.  We claim that on the free summand the
differential is the integral version of the differential defined in 
\cite{Scannell-Sinha}.  There in Theorem 2.1, through the tower of fibrations 
\begin{equation}\label{towerforconf}
C_n (\R^d) \to C_{n-1} (\R^d) \to \cdots \to C_0 (\R^d)
\end{equation}
the well-known rational homotopy Lie algebra of the configuration space $C_n (\R^d)$ is calculated as generated by classes $b_{ij}$ in degree $d-1$.
Under the map from the total fiber of $\mathcal{P}(\underline{n-1}) \left( \bigvee S^2 \right)$ to $C_n (\R^3)$ the basis for the
free Lie algebra on generators, say $x_i$, sends a bracket to a corresponding brackets in the $b_{in}$'s.    Because
the projections in the tower  (\ref{towerforconf}) are split, these brackets are integral generators of free summands as well.
(In fact, one can use the splitting of the tower  (\ref{towerforconf}) and the Hilton--Milnor theorem to express homotopy groups of configuration spaces
as a direct sum of homotopy groups of spheres.)
The formulas for the differential given in \cite{Scannell-Sinha} are given in terms of these integral generators, so they
 hold for the spectral sequence over the integers as well.

In \cite{Conant} the cokernel of the rational $d^1$ is computed to be $\mathcal{A}^I_{n-1} \otimes \Q$.  
While the result is stated rationally (which
is where the conjecture was made), all of the calculations involve only integer coefficients.
\end{proof}

\bigskip

At the $E^2$-level the components of $AM_n$ thus look like they should receive a universal  additive finite 
type-$(n-1)$ invariant over the integers, which was established for $n=3$ as the main result of \cite{BCSS}.  

We elaborate our conjecture as follows.  Because the map $ev_n$ from the knot space to this 
Goodwillie--Weiss tower, sometimes called the ``homotopy tower'', factors the map to the variant that Volic 
considers in \cite{VolicFT}, sometimes called the ``rational homology tower'', 
we already knew that it encodes every rational finite type-$\frac{n}{2}$ invariant.
By Proposition~\ref{n-moves}, we now know that any invariant which factors through $\pi_0(ev_n)$ is of type at most $(n-1)$.
Standard finite-type theory then gives a map from $\mathcal{A}^I_{n-1}$ to $\pi_0(AM_n)$, namely 
by taking alternating sums of values on resolutions of knots with singularities described by a given chord diagram.    
We conjecture that this is an isomorphism at $E^2$, which collapses to $E^\infty$.  This would imply by 
Theorem~\ref{E2Comp} that all weight systems lift to finite-type invariants over the integers.  That is, it would establish 
$\pi_0(ev_\infty)$ as a refinement of the Kontsevich integral, defined over the integers.\\

In the framed setting we have some additional low-dimensional calculations. Namely, 
$AM_1^{fr} \simeq \Omega SO(3)$, implying its components are isomorphic to $\Z/2$.  The evaluation map calculates the parity of the 
framing, which is indeed the only additive type-one invariant.   Next,  Theorem~3.6 of \cite{BCSS} states that $AM_2$ is contractible.  
The subcubical diagram which defines $AM^{fr}_2$ fibers over that which defines $AM_2$, with the maps in this fibering built 
from the standard fibration of $SO(3)$ over $S^2$ as the 
unit tangent bundle.  The fiber is a subcubical model for $\Omega S^1 \simeq \Z$ 
(pulled back from the cosimplicial model for $\Omega S^1$ via $\mathcal{G}_2$), and the map from the framed knot space 
to $AM_2^{fr}$ classifies framing number.  

More generally,  the subcubical diagram which defines $AM^{fr}_n$ fibers over that which defines $AM_n$ with fiber given by a $n$-subcubical diagram
which models $\Omega S^1$.  Because $\pi_0 AM^{fr}_n$ projects surjectively onto $\pi_0 AM^{fr}_2$, compatibly with the identification
of this fiber with $\Omega S^1$, on components this yields a splitting 
$\pi_0 AM^{fr}_n \cong \pi_0 AM_n \times \Z$.  (Note that this would follow immediately from  elementary results in Goodwillie-Weiss calculus
if the splitting of spaces of framed knots can be made functorial.)
Thus these Goodwillie--Weiss models reflect the fact that the usual and framed finite-type theories
differ only by the framing number invariant.  

One key step towards establishing this conjecture would be the collapse of the spectral sequence, which is now of a tower of fibrations amenable to tools from algebraic topology.  This is in contrast to Vassiliev's approach, where the limiting process of unstable spectral sequences is  not well understood.  
(See \cite{Giusti} for this limiting process in a piecewise-linear setting.) The Goodwillie-Weiss tower is built from maps of spaces, in particular the sequences of (\ref{layersequence}) and the diagrams which define $AM_n$, 
 which  have been analyzed to great effect for knots in higher dimensions \cite{LTV}.

Another key intermediate step would be the surjectivity on components of $ev_n$.  This statement would follow from deep connectivity results of Goodwillie--Klein \cite{GK} if those applied in codimension two, but it may be approachable more directly in this case.

We suspect, however, that direct analysis of the invariants which arise from $\pi_0 ev_n$ will be most fruitful.  They have already led to new geometric insight in degrees three and four \cite{BCSS, Flowers}.  Sinha--Walter's Hopf invariants \cite{SinhaWalter13} can fully be applied by the calculations of Proposition~\ref{E1Term}.  They seem to lead to 
Goussarov--Polyak--Viro formulae, which is a promising sign.

\bibliographystyle{alpha}
\bibliography{refs}

\end{document}